\documentclass{article}

\usepackage[a4paper,margin=2cm]{geometry} % Simplified geometry setup
\usepackage{amssymb,amsmath,amsthm,color,graphicx,dsfont,mathrsfs,stmaryrd,mathtools}
\usepackage{braket} % For Dirac notation
\usepackage{authblk} % For author affiliations
\usepackage[colorlinks=true,allcolors=blue]{hyperref} % Hyperlinks with colored links
\usepackage[capitalise]{cleveref} % For clever referencing
\usepackage{diagbox}
\usepackage{arydshln}
\usepackage{array,multirow}
\usepackage[
    backend=biber,
    maxbibnames=6,
    maxcitenames=6,
    style=alphabetic,
    sorting=nyt,
    isbn=false,
    doi=false,
    url=false,
    %eprint=false,
giveninits=true, % Changed firstinits to giveninits
date=year]{biblatex}
\addbibresource{lindblad_quantic.bib}

\usepackage[multiuser,draft,inline,marginclue]{fixme} % For collaborative notes
\FXRegisterAuthor{rr}{arr}{RR}
\FXRegisterAuthor{la}{ala}{LA}
\fxusetheme{colorsig}

\newtheorem{theorem}{Theorem}
\newtheorem{lemma}{Lemma}
\newtheorem{remark}{Remark}
\newtheorem{definition}{Definition}
\newtheorem{proposition}{Proposition}
\newtheorem{corollary}{Corollary}
\newtheorem{example}{Example}
\newtheorem{assumption}{Assumption}

\newcommand{\Ndissip}{N_d}

\newcommand{\N}{\mathbb{N}}

\newcommand{\C}{\mathbb{C}}
\newcommand{\R}{\mathbb{R}}

\renewcommand{\H}{\mathcal{H}}

\newcommand{\D}{\mathcal{D}}

\newcommand{\T}{\mathcal{T}}
\renewcommand{\S}{\mathcal{S}}

\newcommand{\xL}{\mathcal{L}}
\newcommand{\xH}{\mathcal{H}}

\newcommand{\xA}{\mathcal{A}}
\newcommand{\xF}{\mathcal{F}}
\newcommand{\xtr}[1]{\operatorname{Tr}\left( #1 \right)}
\newcommand{\xK}{\mathcal{K}}
\newcommand{\xN}{\mathbb{N}}

\newcommand{\norm}[1]{\left\| #1 \right\|}
\newcommand{\trnorm}[1]{\| #1 \|_1}

%% Bold symbols for operators %%
\let\bf\boldsymbol

\newcommand{\Id}{\operatorname{\mathbf{Id}}}

\newcommand{\orho}{{\bf \rho}}
\newcommand{\oM}{{\bf M}}
\newcommand{\oa}{{\bf a}}
\newcommand{\ob}{{\bf b}}
 % \or est déjà défini
\newcommand{\oB}{{\bf B}}

\newcommand{\oL}{{\bf L}}
\newcommand{\oH}{{\bf H}}
\newcommand{\oG}{{\bf G}}

\newcommand{\oP}{{\bf P}}
\newcommand{\oS}{{\bf S}}
\newcommand{\oT}{{\bf T}}

\newcommand{\oX}{{\bf X}}
\newcommand{\oLambda}{{\bf \Lambda}}
\newcommand{\osigma}{{\bf \sigma}}
\newcommand{\oQ}{{\bf Q}}

\newcommand{\oY}{{\bf Y}}
\newcommand{\oA}{{\bf A}}

\newcommand{\oO}{{\bf O}}

\DeclarePairedDelimiter\floor{\lfloor}{\rfloor}

\title{Unconditionally stable time discretization of Lindblad master equations in infinite dimension using quantum channels}

\author[1]{Rémi Robin\thanks{remi.robin@minesparis.psl.eu}}
\author[1]{Pierre Rouchon\thanks{pierre.rouchon@minesparis.psl.eu}}
\author[1,2]{Lev-Arcady Sellem\thanks{lev-arcady.sellem@usherbrooke.ca}}
\affil[1]{Laboratoire de Physique de l'\'Ecole Normale Supérieure, Mines Paris, Inria, CNRS, ENS-PSL, Sorbonne Université, PSL Research University, Paris, France}
\affil[2]{Institut Quantique and Département de Physique, Université de Sherbrooke, Sherbrooke, Québec, Canada}
\begin{document}
\maketitle
\begin{abstract}

We examine the time discretization of Lindblad master equations in infinite-dimensional Hilbert spaces.
Our study is motivated by the fact that, with unbounded Lindbladian, projecting the evolution
onto a finite-dimensional subspace using a Galerkin approximation inherently introduces stiffness, leading
to a Courant--Friedrichs--Lewy type condition for explicit integration schemes. 

We propose and establish the convergence of a family of explicit numerical schemes for time discretization adapted to infinite dimension. These schemes correspond to  quantum channels and thus  preserve the physical properties of quantum evolutions on the set of density operators:
linearity, complete positivity and trace.  Numerical experiments inspired by bosonic quantum codes illustrate the practical interest of this approach when approximating the solution of infinite dimensional problems by that of finite dimensional problems of increasing dimension.
\end{abstract}

\tableofcontents
\section{Introduction}
The Lindblad equation, also known as Gorini-Kossakowski-Sudarshan-Lindblad equation~\cite{lindbladGeneratorsQuantumDynamical1976,goriniCompletelyPositiveDynamical1976,gkls_history},
describes open quantum systems
within the regime of weak coupling to a Markovian environment
\cite{breuerTheoryOpen2006}.
In this setting, the state of an open quantum system is given by
a density operator $\orho$, that is a positive trace-class operator of unit trace, on a separable Hilbert space $\xH$.
The GKSL equation describes the evolution of the density operator $\orho$ as follows:

\begin{align}
    \label{eq_lindblad_full}
\frac{d\orho}{dt} = \xL(\orho)
	\coloneqq -i[\oH, \orho]
	+ \sum_{j} \left( \oL_j \orho \oL_j^\dagger
			- \frac{1}{2} \left(\oL_j^\dagger \oL_j \orho+ \orho \oL_j^\dagger \oL_j\right)  \right),
\end{align}
where $\oH$ is a self-adjoint operator called the Hamiltonian of the system,
and the so-called jump operators
$(\oL_j)_j$
characterize the interaction of the system with its environment. $\xL$ is a super-operator, namely acting on the set of operators on $\xH$, and is called a Lindbladian.

In numerous applications, including quantum optics, quantum chemistry or quantum electrodynamics, the underlying Hilbert space $\xH$ is infinite-dimensional, and both the Hamiltonian $\oH$ and the operators $(\oL_j)_{j}$ are unbounded.
The numerical simulation of \cref{eq_lindblad_full} thus requires discretizing $\xH$
(referred to as space discretization), which means selecting a finite-dimensional subspace, 
as well as discretizing time to solve the evolution equation.

This paper focuses on structure-preserving time discretization schemes, inspired by the principles outlined in \cite{hairerGeometricNumericalIntegration2006}. The semigroup $(e^{t \xL})_{t\geq0}$ is for every $t\geq0$ a quantum channel, see e.g. \cite[Chapter 8]{nielsenQuantumComputationQuantum2010}. A quantum channel is a linear map that is both Completely Positive and Trace-Preserving (CPTP). We are interested in numerical schemes that share this property.

Recently, in the setting of finite-dimensional Lindblad equations,
Lu and Cao introduced time discretization schemes that preserve the set of density operators \cite{caoStructurePreservingNumericalSchemes2024}. However, these schemes are non-linear, unlike the continuous solution of the Lindblad equation. Similarly, \cite{appeloKrausKingHighorder2024} proposes Completely Positive (CP) schemes that can be renormalized to preserve the trace, but this also comes at the cost of non-linearity, again resulting in non-CPTP maps.
To address this issue, we extend a technique introduced in \cite{jordanAnatomyFluorescenceQuantum2016} [Appendix B], enabling the transformation of any CP scheme from the aforementioned references into a CPTP, and thus linear, scheme. Notably, this construction can be applied even to unbounded Lindbladians, resulting in a CPTP scheme that is inherently bounded.

The first contribution of this article is to theoretically investigate these new explicit quantum channel/CPTP schemes. Our main result, presented in \cref{th_onedissipator_firstorder}, establishes that, under reasonable assumptions, the first order variant of these schemes is consistent and stable for any positive time-step, even for the infinite-dimensional Lindblad equation.

In a second time, we study the interest of these schemes for numerical simulations. To this aim, we adhere to a classical approach by employing a Galerkin method.  Specifically, we project the Hamiltonian $\oH$ and the operators $(\oL_j)_{1\leq j\leq \Ndissip}$ to a finite-dimensional Hilbert space $\xH_N$. One advantage of this approach is that it leads to a Lindblad equation
on the finite dimensional Hilbert space $\xH_N$, thus preserving the structure of the evolution.
Limitation and convergence of this method for the Schrödinger equation can be found in the recent preprint \cite{fischer2025quantumparticlewrongbox}; while \textit{a posteriori} error estimate for the Lindblad equation are developed in \cite{etienneyPosterioriErrorEstimates2025}. Note also that several efforts have been undertaken to go beyond the linear space discretization, with techniques including low rank approximation~\cite{lebrisLowrankNumericalApproximations2013,gravinaAdaptiveVariationalLowrank2024} or model reduction~\cite{azouitAdiabaticEliminationOpen2016,leregentAdiabaticEliminationComposite2024}.
Once we have set a finite-dimensional Hilbert space, the task reduces to solving a Lindblad equation on this space, taking the form of a set of ordinary differential equations (ODE).
As one should expect, the fact that the operators $\oH$ and $(\oL_j)_{1\leq j\leq \Ndissip}$ are unbounded
implies that stiffness naturally occurs in this set of ODEs,
as illustrated in \cref{subsubsec:CFL}. Hence, the use of explicit solvers%
\footnote{%
	Note that implicit solvers are rarely used for Lindblad equations because they require solving a high-dimensional linear equation involving the super-operator $\xL_N$ on the space of operators on $\xH_N$.	}
leads to unstable schemes if the dimension of the discretization space is increased without reducing the time-step. We show that this restriction does not apply to our quantum channel schemes.

% Note that naive implementations of implicit schemes are unfortunatly often inaccessible for complexity reason. Namely, in the case $\operatorname*{Dim} \xH_N=N$, the evaluation of the Lindbladian on a density operator $\orho_N \in M_N(\C)$ involves several matrix multiplications of size $N\times N$, hence is $O(N^3)$\footnote{Indeed, as $10 \leq  N\leq 10^5$ for the cases of interest in this article, Strassen's algorithm or other improvements with better scaling are not relevant}. Nevertheless, the Lindbladian is a super-operator, thus is represented by $N^2\times N^2$ matrix. As a consequence, an implicit scheme would require to inverse an $N^2\times N^2$ matrix, which scale roughly in $N^6$. This means that except for small dimension or particularly sparse Lindbladian, the naive inversion approach is not realistic. The authors are not aware of any efficient method to solve this problem but believe that an investigation using, for example, Krylov-type methods could lead to interesting results.

%%%%%%%%%%%%%%%%%%%%%%%%%%%%%%%%%%%%%%%%%%%%%%%%%%%%%%%%%%%%%%%%%%%%%%%%%%%%%%%%%%%%%%%%%%%%%%%%%%%%%%%%%%%

The paper is organized as follows.
In \cref{sec_preliminaries}, we present the prerequisite background and definitions.
\cref{sec-intro_notations} collects notations and definitions used throughout the paper.
\cref{sec:cptp_schemes} presents a general method to obtain explicit schemes of arbitrary order
preserving the structure of Lindblad equations in finite dimension. First, we recall the construction of non-linear schemes introduced in \cite{caoStructurePreservingNumericalSchemes2024}. Then, in \cref{subsec_CPTP_schemes}, we utilize these non-linear schemes and extend the technique outlined in \cite{jordanAnatomyFluorescenceQuantum2016}[Appendix B] to develop (linear) CPTP schemes. The study of these schemes is the central contribution of these paper. \cref{section-definition-lindblad} presents the functional analysis framework
used in the study of Lindblad equations in infinite dimension.
\cref{subsec_H_n} recalls regularity results from the literature, used to establish \textit{a priori} estimates.

In Section \ref{sec_infinite_dim_schemes}, we focus on a Lindblad equation in infinite dimension with an \textit{unbounded} generator. Under appropriate assumptions, we provide, in \cref{lem_apriori_estimate_sans_H}, \textit{a priori} estimates on the continuous solution $\orho_t$. Then, we establish the main result of this paper in \cref{th_onedissipator_firstorder}: the convergence of the CPTP schemes introduced in the section \cref{subsec_CPTP_schemes} toward the continuous solution. The key element is the proof of the consistency of the scheme under the \textit{a priori} estimates, established in \cref{lem_oder1}.

Section \ref{sec_Galerkin_and_num} introduces Galerkin space discretization. We then present an example: the many-photon loss process, where we demonstrate that explicit solvers are subject to a stability condition that relates the size of the discretization space and the time-step. We then show that our quantum channel/CPTP schemes are not affected by this stability condition. Finally, we provide numerical examples that illustrate the robustness of the quantum channel schemes with refined space discretization, and study their numerical efficiency.

\section{Preliminaries}
\label{sec_preliminaries}
\subsection{Notations}
\label{sec-intro_notations}
We fix $\hbar=1$ and work with dimensionless quantities. Besides, we use the following notations:
\begin{itemize}
    \item $\xH$ is a complex separable Hilbert space. Scalar products are denoted using Dirac's bra-ket notation, namely $\ket{x}$ is an element of $\xH$, whereas $\bra{x}$ is the linear form canonically associated to the vector $\ket{x}$.
	    \item Operators on $\xH$ are denoted with bold characters such as $\oa, \ob, \orho$, $\oH$, $\oL$.
    \item $\xK^1$ or $\xK^1(\xH)$ is the Banach space of trace-class operator on $\xH$,
	    equipped with the trace norm defined by:
    \begin{align*}
	    \forall \orho \in \xK^1, \quad \trnorm\orho=\xtr{\sqrt{\orho^\dag \orho}}.
    \end{align*}
The convex cone of positive element of $\xK^1$ is denoted $\xK^1_+$. $\xK_d\subset \xK^1_+$ denotes the convex set of density operators, i.e.,
\begin{align*}
	\xK_d= \left\{ \orho \in \xK^1 \mid \orho^\dag= \orho,\, \xtr{\orho}=1, \, \orho\geq 0 \right\}.
\end{align*}

    \item $\xK^2$ or $\xK^2(\xH)$ denotes the space of Hilbert–Schmidt operators on $\xH$,
	    equipped with the Hilbert-Schmidt norm defined by:
    \begin{align*}
	    \norm{\orho}_2=\sqrt{\xtr{\orho^\dag \orho}}.
    \end{align*}

    \item $B(\xH)$ denote the (Von Neumann) algebra of bounded operators on $\xH$.
	    $\Id$ or $\Id_\xH$ denotes the identity operator and $\norm{ \cdot }_\infty$ is the operator norm induced by the Hilbert norm on $\xH$.
	\item If $\xH,\xH'$ are Hilbert spaces, we denote $B(\xH,\xH')$ the Banach space of linear applications from $\xH$ to $\xH'$ that are continuous for the operator norm. More generally, we use $\mathfrak{L}(\xK,\xK')$ to denote the Banach space of linear applications from one Banach space to another.
    \item If $\oA$ is a unbounded operator on $\xH$, we denote its domain by $\D(\oA)$, and define $\D(\oA^\infty) = \bigcap_{n\geq0} \D(\oA^n)$.
	    We denote $\sigma(\oA)$ the spectrum of $\oA$,
		and $R(\lambda, \oA)=(\lambda \Id - \oA)^{-1}$
	its resolvent for $\lambda \in \mathbb{C} \setminus \sigma(\oA)$.
    % \item When working on the Hilbert space $\xH = L^2(\R,\C)$, we define
	%     the so-called annihilation operator
	%     $\oa = \tfrac1{\sqrt2} (x+\partial_x)$. 
	
	\item For any Banach space $\xK$ and $T>0$, the Banach space of essentially bounded function from $[0,T]$ to $\xK$ is denoted $L^\infty(0, T;\xK)$.
	
	\item When working within the Hilbert space $L^2(\xN,\C)$, we denote its canonical Fock basis by $(\ket{n})_{n\in \xN}$. The annihilation operator $\oa$ and creation operator $\oa^\dag$ act as follows on this basis:
	\begin{align}
		\oa\ket{n+1}=\sqrt{n+1}\ket{n},\quad \oa^\dag \ket{n}=\sqrt{n+1}\ket{n+1}.
	\end{align}
	% \item When working on a tensor product space $\xH= \xH^a \otimes \xH^b$,
		% given two operators $\oX_a$ and $\oX_b$
		% defined respectively in $\xH^a$ and $\xH^b$,
		% we will often alleviate the notations by identifying them respectively to the operators
		% $\oX_a\otimes \Id_{\xH^b}$ and $\Id_{\xH^a} \otimes \oX_b$ on $\xH$.
		% Similarly, we will write $\oX_a \oX_b$ for $\oX_a \otimes \oX_b$.
\end{itemize}
\vspace{1em}

For the evolution of open quantum systems, we use the following conventions:
\begin{itemize}
    \item Let $\oL$ and $\oX$ be linear operators and $\orho\in \xK^1$,
	    we define:
    \begin{align}
	    D[\oL](\orho) &= \oL \orho \oL^\dag
	    			-\frac{1}{2} \oL^\dag \oL \orho
				-\frac{1}{2}\orho \oL^\dag \oL\\
	    D^*[\oL](\oX) &= \oL^\dag \oX \oL
	    			-\frac{1}{2} \oL^\dag \oL \oX
				-\frac{1}{2} \oX \oL^\dag \oL,
    \end{align}
    \item $\mathcal{L}$ denotes the Lindbladian super-operator
	    associated to a Hamiltonian $\oH$
		and a family of jump operators
		$(\oL_j)_{1\leq j\leq N_d}$ on $\xH$,
		acting on elements $\orho$ in (a domain in) $\xK^1$ through
    \begin{align}
        \mathcal{L}(\orho)= -i [\oH,\orho] + \sum_{j=1}^{\Ndissip} D[\oL_j](\orho)
    \end{align}
		where $[\oA,\oB] = \oA\oB - \oB\oA$ denotes the commutator of two operators.
    For unbounded $\mathcal{L}$, we refer to \cref{section-definition-lindblad} for a proper definition
		of the semigroup $(\S_t)_{t\geq0}$
		associated to $\mathcal L$,
		and denote $\orho_t=\S_t(\orho_0)$
		the solution of the dynamical system
		\[ \frac d{d t} \orho_t = \mathcal L(\orho_t)\]
		initialized in a given element $\orho_0\in\xK^1$.

    \item $\mathcal{L}^*$ is formally the adjoint of $\mathcal{L}$;
	    for $\oX$ in (a domain in) $B(\xH)$, it takes the form
    \begin{align}
        \label{eq-formal-lindblad_adjoint}
        \mathcal{L}^*(\oX)= i [\oH,\oX] +\sum_{j=1}^{\Ndissip} D^*[\oL_j](\oX).
    \end{align}
    We refer again to \cref{section-definition-lindblad} for a proper definition
		of the associated semigroup $(\T_t)_{t\geq0}$.
		Note that the semigroups
		$(\T_t)_{t\geq 0}$ on bounded operators
		and
		$(\S_t)_{t\geq 0}$ on trace-class operators
		are related through the following identity,
		where $t\geq 0$,
		$\oX\in B(\xH)$ and $\orho_0\in \xK^1$:
    \begin{align}
        \label{eq-semigroup-dual}
        \xtr{\S_t(\orho_0) \oX}=\xtr{\orho_0 \T_t(\oX)}.
    \end{align}
		Moreover, $(\S_t)_{t\geq 0}$ is called the pre-dual semigroup
		associated to $(\T_t)_{t\geq 0}$,
		since $B(\xH)$ is the dual of $\xK^1(\xH)$
		(where the linear functional associated to $\oX \in B(\xH)$
		is $\orho\mapsto \xtr{\oX\orho}$).

		In the physics literature, the evolution of density operators with $\S_t$
		is called the \emph{Schrödinger picture},
		while the evolution of operators with $\T_t$ is called the \emph{Heisenberg picture}.
	\item Given a Hamiltonian $\oH$ and a family of jump operators $(\oL_j)_j$, we reserve the notation $\oG$ for the non-hermitian operator $\oG=-i\oH-\frac12 \sum_{j=1}^{\Ndissip} \oL_j^\dag \oL_j$.
    %\item For a given self-adjoint operator $\oLambda,D(\oLambda)$ on $\xH$ with $\oLambda \geq \Id$, we define the Hilbert spaces $\xH^{\oLambda}$ equipped with the inner product $\norm{\cdot}_\oLambda$ by
	%\begin{align*}
	%	\braket{\varphi|\psi}_\oLambda=\braket{\oLambda^{1/2}\varphi|\oLambda^{1/2}\psi}.
	%\end{align*}
	%Note that $\xH^{\oLambda}$ is isomorphic to the Hilbert space $\D(\oLambda^{1/2})$ equipped with the graph norm. When the context ensure unambiguity, we denote $\xH^0=\xH \supset \xH^1=\xH^\oLambda \supset \xH^2=\xH^{\oLambda^2} \ldots$ We also introduce $\xH^\infty= \cap_{n\geq 0}\xH^n$ which is dense in all the $\xH^n$ equipped with their topologies.
	%
	%Associated to these spaces, we introduce the scale of Banach spaces
	%\begin{align}
	%	\xK_{\oLambda^k}=\{\orho=\oLambda^{-k/2}\sigma \oLambda^{-k/2} \mid \sigma \in \xK^1(\xH), \, \norm{ \orho}_{\xK_{\oLambda^k}}=\norm{\sigma}_1 \}
	%\end{align}
	%we refer to \cref{app_defK} for the proper definition and main properties of these spaces.
\end{itemize}

\subsection{Time discretization in finite dimension}
\label{sec:cptp_schemes}
\subsubsection{Non-linear positivity-preserving schemes}
\label{subsec:Non-linear positivity preserving schemes}
In this section, we recall the main ideas of the method developed in \cite{caoStructurePreservingNumericalSchemes2024} to obtain
(non-linear) numerical schemes of
arbitrarily high order
preserving the positivity and the trace when the underlying Hilbert space $\xH$ is finite dimensional. In particular, all the linear operators are bounded.

We recall that with our notation
$\oG= -i\oH-\frac{1}{2}\sum_{j=1}^{\Ndissip} \oL^\dag_j \oL_j$, and we can thus write the Lindbladian as
\begin{align}
	\xL(\orho)= \oG \orho + \orho \oG^\dag+ \sum_{j=1}^{\Ndissip} \oL_j \orho \oL_j^\dag.
\end{align}
Introducing the two superoperators
\begin{align}
	\xL_1(\orho)=\oG \orho + \orho \oG^\dag,
	\quad
	\xL_2(\orho)= \sum_{j=1}^{\Ndissip} \oL_j \orho \oL_j^\dag,
\end{align}
we have the splitting $\xL=\xL_1+\xL_2$.
Crucially, both $\xL_1$ and $\xL_2$ generate positivity-preserving semigroups.
The semigroup generated by $\mathcal L_1$ has the simple expression
\begin{align*}
    e^{ t \xL_1}(\orho)=e^{ t \oG} \orho e^{ t \oG^\dag}.
\end{align*}
Note that this formula allows to straightforwardly transform any numerical approximation of the semigroup generated by $\oG$
into a positivity-preserving numerical approximation of the semigroup generated by $\xL_1$.
Indeed, for any approximation of the semigroup $e^{\Delta t\oG}$ by $\oM_{0,\Delta t}$ such that $\norm{e^{\Delta t\oG}-\oM_{0,\Delta t}}_\infty \leq C \Delta t^{N+1}$, we have
\begin{align*}
	\norm{e^{\Delta t\xL_1}(\orho)-\oM_{0,\Delta t}\orho \oM_{0,\Delta t}^\dag}_1
	&= \norm{ e^{\Delta t \oG} \, \orho \, e^{\Delta t \oG^\dag} - \oM_{0,\Delta t}\orho \oM_{0,\Delta t}^\dag }_1 \\
	&\leq \norm{e^{\Delta t \oG}\orho(e^{\Delta t \oG^\dag}-\oM_{0,\Delta t}^\dag)}_1 +\norm{ (e^{\Delta t \oG}-\oM_{0,\Delta t})\orho \oM_{0,\Delta t}^\dag}_1\\
	& \leq \norm{e^{\Delta t \oG}}_\infty \norm{\orho}_1 \norm{e^{\Delta t \oG}-\oM_{0,\Delta t}}_\infty \\
	&+ \norm{e^{\Delta t \oG} -\oM_{0,\Delta t}}_\infty \norm{\orho}_1 \norm{\oM_{0,\Delta t}}_\infty.
\end{align*}
Since $(e^{\Delta t \oG})_{t\geq 0}$ is a contraction semigroup, we have $\norm{e^{\Delta t \oG}}_\infty \leq 1$, and thus $\norm{\oM_{0,\Delta t}}_\infty \leq 1+ C \Delta t^{N+1}$.
As a consequence, for $\Delta t$ small enough, the previous equation leads to
\begin{align}
	\norm{e^{\Delta t\xL_1}(\orho)-\oM_{0,\Delta t}\orho \oM_{0,\Delta t}^\dag}_1 \leq \tilde C \Delta t^{N+1} \norm{\orho}_1,
\end{align}
for some constant $\tilde C$.
Finally, $\orho \mapsto \oM_{0,\Delta t}\orho \oM_{0,\Delta t}^\dag$ is a completely positive map.
To obtain a first order scheme, one can choose the approximation
$\oM_{0,\Delta t}=\Id +\Delta t \oG$.
Indeed, $\norm{e^{\Delta t \oG}-\oM_{0,\Delta t}}_\infty \leq C \Delta t^2$ as
	$$\norm{e^{\Delta t \oG}-(\Id+\Delta t \oG)}_\infty=\norm{ \int_0^{\Delta t} (\Delta t-s)e^{s \oG } \oG^2 \, \mathrm{d}s}_\infty \leq \frac{\Delta t^2}{2} \norm{\oG}_\infty^2,$$
were we used again $\norm{ e^{s\oG}} \leq 1$.
Later in the paper, for technical reasons explained in \cref{rmk_other_treatment_of_H},
we also use the approximation
\begin{equation}
	\oM_{0,\Delta t}= \left(\Id-i\frac{\Delta t}{2}  \oH\right)
		\left(\Id + i\frac{\Delta t}{2}  \oH\right)^{-1}
		\left(\Id- \frac{\Delta t}{2} \sum_{j=1}^{\Ndissip} \oL_j^\dag \oL_j\right)
\end{equation}
which still satisfies
\begin{equation}
	\oM_{0,\Delta t}=\Big(\Id-i\Delta t  \oH\Big)\left(\Id- \frac{\Delta t}{2} \sum_{j=1}^{\Ndissip} \oL_j^\dag \oL_j\right)
	+O\left(\Delta t^2\right)
	=\left(\Id+\Delta t \oG\right) + O\left(\Delta t^2\right).
\end{equation}
This choice amounts to treating separately the Hamiltonian and non-Hamiltonian parts of $\oG$
when approximating the semigroup $e^{t\oG}$.
This splitting strategy takes advantage of the fact that
the propagator $e^{-it\oH}$ of the Hamiltonian part can be approximated by a unitary operator
by the symmetric (1,1)-Padé approximant
$(\Id - i\tfrac t2 \oH)(\Id+i\tfrac t2\oH)^{-1}$.

Going back to the approximation of $e^{t\xL}=e^{t(\xL_1+\xL_2)}$, Duhamel's formula gives
\begin{align}
	\label{eq_duhamel_finite_dim}
    e^{\Delta t \xL}(\orho_0)
	&=e^{\Delta t \xL_1}(\orho_0) +\int_{0}^{\Delta t} e^{(\Delta t -s)\xL_1}(\xL_2(\orho_s)) \, ds\\
	&=e^{\Delta t \xL_1}(\orho_0) + \Delta t \, \mathcal L_2(\orho_0)
		+ \int_{0}^{\Delta t} \left( e^{(\Delta t -s)\xL_1}(\xL_2(\orho_s)) - \mathcal L_2(\orho_0) \right) \, ds.
	\label{eq_duhamel_finite_dim_2}
%	&=e^{\Delta t \xL_1}(\orho_0) + \Delta t \, \mathcal L_2(\orho_0) + O(\Delta t^2)
\end{align}
For $\Delta t$ sufficiently small and $0\leq s \leq \Delta t$, we have
\begin{equation}
\begin{aligned}
	\norm{e^{(\Delta t -s)\xL_1}(\xL_2(\orho_s))- \xL_2(\orho_0)}_1
	&\leq \norm{e^{(\Delta t -s)\xL_1}(\xL_2(\orho_s)-\xL_2(\orho_0))}_1
	+ \norm{ \left( e^{(\Delta t -s)\xL_1} - \operatorname{Id }\right) (\xL_2(\orho_0))}_1\\
	&\leq \norm{e^{(\Delta t -s)\xL_1}}_{\mathfrak{L}(\xK^1)} \norm{\xL_2}_{\mathfrak{L}(\xK^1)} \norm{\orho_s-\orho_0}_1
	+ \norm{e^{(\Delta t -s)\xL_1}-\operatorname{Id }}_{\mathfrak{L}(\xK^1)} \norm{\xL_2}_{\mathfrak{L}(\xK^1)} \norm{\orho_0}_1\\
	&\leq C \norm{\orho_0}_1 \Delta t,
\end{aligned}
\end{equation}
where $\norm{\cdot}_{\mathfrak{L}(\xK^1)}$ stand for the operator norm from $\xK^1(\xH)$ to itself and $C$ is a constant independent of $\Delta t$, $s$, and $\orho_0$.
Thus, the integral on the right-hand side of \cref{eq_duhamel_finite_dim_2} can be neglected within an error of order $\Delta t^2$.
Consequently, for each dissipator $\oL_j$,
we introduce $\oM_{j,\Delta t}=\sqrt{\Delta t} \oL_j$, so that $\Delta t \xL_2(\orho_0)=\sum_{j=1}^{\Ndissip} \oM_{j,\Delta t}\orho_0\oM_{j,\Delta t}^\dag$,
and define a first-order scheme%
%\footnote{We proved the consistency, but the scheme being linear it is also stable}
\begin{align}
	\orho_{n+1}=\xA_{\Delta t}^{1}(\orho_n) \coloneq \oM_{0,\Delta t} \, \orho_n \, \oM_{0,\Delta t}^\dag +\sum_{j=1}^{\Ndissip}\oM_{j,\Delta t} \, \orho_n  \, \oM_{j,\Delta t}^\dag=e^{\Delta t \xL}(\orho_n) + \norm{\orho_n}_1 O(\Delta t^2).
\end{align}
Higher-order schemes can be obtained by iteratively applying Duhamel's formula in \cref{eq_duhamel_finite_dim} and choosing a quadrature rule for the resulting integral on
time
simplexes $0\leq t_1 \leq \ldots \leq t_p \leq \Delta t$ -- see \cite{caoStructurePreservingNumericalSchemes2024} for details. All these schemes share the generic structure
\begin{align}
	\label{eq_abstractA}
	\xA_{\Delta t}(\orho)= \sum_{j=0}^{m} \oM_{j,\Delta t} \, \orho \,\oM_{j,\Delta t}^\dag =e^{\Delta t \xL}(\orho)+ \norm{\orho}_1 O(\Delta t^{p+1}),
\end{align}
where $p$ is the order of the scheme
and the integer $m$ and operators $\oM_{j,\Delta t}$ depend on $p$, the number of jump operators $\Ndissip$ and the choice of quadrature rule.

These schemes are completely positive, yet they do not preserve the trace, which deviates by an error of order $\Delta t^{p+1}$ per time-step.
To address this issue, Cao and Lu introduced in \cite{caoStructurePreservingNumericalSchemes2024} the associated normalized non-linear schemes
\begin{align*}
	\tilde{\xA}_{\Delta t}(\orho)=\frac{\xA_{\Delta t}(\orho)}{\xtr{\xA_{\Delta t}(\orho)}}.
\end{align*}
Note that, despite the fact that these normalized schemes preserve both the positivity and the trace, the lack of linearity means that they are not proper quantum dynamical maps.
In particular, they can lose important properties of quantum dynamical maps, such as contractivity of the nuclear norm of the solution over time.

\subsubsection{Quantum channel schemes}
\label{subsec_CPTP_schemes}
We now present an alternative procedure to transform a linear CP scheme of the form given in \cref{eq_abstractA} into a CPTP scheme; this procedures ensures the preservation of the trace without sacrificing the linearity of the scheme. To achieve this, we employ a method introduced in \cite[Appendix B]{jordanAnatomyFluorescenceQuantum2016}
for the first-order approximation of stochastic Lindblad equations.
We observe that their technique also trivially applies to all orders for deterministic equations.
Let us consider the evolution of the identity operator through the dual evolution of the
non trace-preserving scheme in \cref{eq_abstractA}.
For any $\oO\in B(\xH)$, we have
\begin{equation}
	\xtr{\oO \xA_{\Delta t}(\orho)}=\xtr{\xA^*_{\Delta t}(\oO) \orho},
\end{equation}
where
\begin{equation}
	\xA^*_{\Delta t}(\oO)= \sum_{j=1}^k \oM_{j,\Delta t}^\dag \oO \oM_{j,\Delta t},
\end{equation}
and in particular
\begin{equation}
	\xA^*_{\Delta t}(\Id)= \sum_j \oM_{j,\Delta t}^\dag \oM_{j,\Delta t}.
\end{equation}
It is important to emphasize that for the CP scheme $\xA_{\Delta t}$ to be trace-preserving, it is both necessary and sufficient that $\xA^*_{\Delta t}(\Id)=\Id$.
If $\xA$ is a scheme of order $p$, we also have the estimate
\begin{equation}
	e^{\Delta t \xL^*}(\Id)=\Id=\xA^*_{\Delta t}(\Id) + O(\Delta t^{p+1}).
	\label{eq:accuracy_kraus_sum_id}
\end{equation}
Define $\oS_{\Delta t} =\xA^*_{\Delta t}(\Id)= \sum_j \oM_{j,\Delta t}^\dag \oM_{j,\Delta t}$,
which is a positive operator.
For $\Delta t$ small enough, \cref{eq:accuracy_kraus_sum_id} implies that $\oS_{\Delta t}$
is positive definite, and we can define the positive matrix $\oS_{\Delta t}^{-1/2}$
which also satisfies
\begin{equation}
\oS_{\Delta t}^{-1/2}= \Id + O(\Delta t^{p+1}).
\end{equation}

Substituting $\oM_{j,\Delta t}$ with $\widetilde \oM_{j,\Delta t} = \oM_{j,\Delta t} \oS_{\Delta t}^{-1/2}$
in \cref{eq_abstractA} leads to a Completely Positive Trace-Preserving (CPTP) scheme $\xF_{\Delta t}$
\begin{align}
	\label{eq_abstractA_tilde}
	\xF_{\Delta t}(\orho)=\sum_{j=1}^k \widetilde \oM_{j,\Delta t} \orho \widetilde \oM_{j,\Delta t}^\dag =e^{\Delta t \xL}(\orho)+ \norm{\orho}_1 O(\Delta t^{p+1}),
\end{align}
with now
\begin{align}
	\label{eq_sum_Mk}
	\xF_{\Delta t}^*(\Id)=
	\sum_j \widetilde \oM_{j,\Delta t}^\dag \widetilde \oM_{j,\Delta t}
	&=\oS_{\Delta t}^{-1/2} \, \left( \sum_i \oM_{j,\Delta t}^\dag  \oM_{j,\Delta t} \right)\, \oS_{\Delta t}^{-1/2}= \Id.
\end{align}
\cref{eq_abstractA_tilde} shows that $\xF_{\Delta t}$ has the same order $p$ as the original scheme, while \cref{eq_sum_Mk} shows that it is a CPTP map. Thus, introducing the perturbation of the identity $\oS_{\Delta t}$, allows us to enforce trace preservation in any CP scheme without compromising linearity. 
\begin{remark}
	\label{rmk_bounded_oM_i}
	The fact that $\xF_{\Delta t}$ is a CPTP map implies that the time discretized evolution, like the continuous evolution, contracts the nuclear norm.
	Another important feature is that every $0\leq j \leq \Ndissip$, $\oM_{j,\Delta t}$ satisfy $\oM_{j,\Delta t}^\dag \oM_{j,\Delta t} \leq \Id$ which implies that $\norm{\oM_{j,\Delta t}}_\infty \leq 1$.
	Later, we will consider unbounded operators $\oH$ and/or $\oL_j$ in an infinite dimensional Hilbert space.
	Formally extending the previous notations, all the $\oM_{j,\Delta t}$ might then be unbounded operators, whereas $\widetilde \oM_{j,\Delta t}$ are now ensured to be bounded operators with $\norm{\widetilde \oM_{j,\Delta t}}_\infty \leq 1$.
\end{remark}

\subsection{Quantum Markov semigroups with unbounded generators}
\label{section-definition-lindblad}

Let us now focus on the rigorous definition of the solution of Lindblad equations
in the infinite-dimensional setting.
One can opt for either of two equivalent definitions for the solution to a Lindblad equation: one based on the Von Neumann algebra \(B(\xH)\), as pursued in Chebotarev and Fagnola's works \cite{chebotarevSufficientConditionsConservativity1993,chebotarevSufficientConditionsConservativity1998}, or directly on the Banach space of trace class operators, as initially introduced by Davies \cite{daviesGeneratorsDynamicalSemigroups1979,daviesQuantumDynamicalSemigroups1977}. We follow the former approach.
Let us recall the definition of a Quantum Dynamical Semigroup:

\begin{definition}{}
A quantum dynamical semigroup \((\T_t)_{t \geq 0}\) is a family of operators acting on \(B(\xH)\) that satisfies the following properties:
\begin{itemize}
    \item \(\T_0(\oX)=\oX\) for all \(\oX\in B(\xH)\).
    \item \(\T_{t+s}(\oX)=\T_t(\T_s(\oX))\) for all \(t,s\geq 0\) and \(\oX \in B(\xH)\).
    \item \(\T_{t}(\Id) \leq \Id\) for all \(t\geq 0\).
    \item \(\T_{t}\) is a completely positive map for all \(t\geq0\), meaning that for any finite sequences \((\oX_j)_{1\leq j \leq n}\) and \((\oY_j)_{1\leq j \leq n}\) of elements of \(B(\xH)\), we have \(\sum_{1\leq j,l \leq n} \oY_l^\dag \, \T_t(\oX_l^\dag \oX_j) \,\oY_j \geq 0\).
    \item (normality) for every weakly converging sequence \((\oX_n)_n \rightharpoonup X\) in \(B(\xH)\), the sequence \((\T_t(\oX_n))_n\) converges weakly towards \(\T_t(\oX)\).
    \item (ultraweak continuity) for all \(\orho \in \xK^1\) and \(\oX \in B(\xH)\), we have \(\lim_{t\to 0^+} \xtr{\orho \T_t(\oX)}=\xtr{\orho \oX}\).
\end{itemize}
Furthermore, if \(\T_{t}(\Id) = \Id\) for all \(t\geq 0\), the semigroup is called conservative.
\end{definition}

Now, let us establish the connection between the Lindblad equation and the concept of quantum dynamical semigroup. We will make the following assumptions:
%\begin{assumption}
	%\label{assump_to_defined_Lindblad}
	\begin{itemize}
		\item \(\oG=-i\oH-\frac12\sum_{j} \oL_j^\dag \oL_j\) is the generator of a strongly continuous semigroup of contractions on \(\xH\),
		\item $D(\oG)\subset \cap_j \D(\oL_j)$ and for all \(u \in \D(\oG)\)
		\begin{align}
			\label{eq_L_dagLG}
			\sum_{j=1}^{\Ndissip} \braket{\oL_j u|\oL_j u} +2 \operatorname{Re}\braket{\oG u|u}=0.
		\end{align}
	\end{itemize}
%\end{assumption}

\begin{definition}
	The quantum dynamical semigroup \((\T_t)_{t\geq 0}\) is a solution of the Lindblad equation defined in  \eqref{eq-formal-lindblad_adjoint} if the following weak formulation holds:
	\begin{align}
		\label{eq_weak}
	\Braket{v| \T_t(\oX)u} =
	\Braket {v|\oX u}
		+\int_0^t \left( \Braket{\oG v| \T_s(\oX)u} +\Braket{v| \T_s(\oX)\oG u}+ \sum_{j=1}^{\Ndissip} \Braket{\oL_j v| \T_s(\oX) \, \oL_ju} \right) \mathrm{d}s,
	\end{align}
	for all \(u,v \in \D(\oG)\), \(\oX\in B(\xH)\), and \(t\geq 0\).
\end{definition}
One can show (see \cite[Proposition 2.6]{chebotarevSufficientConditionsConservativity1993}) that a given quantum dynamical semigroup \((\T_t)_{t\geq 0}\) satisfies \cref{eq_weak} if and only if it satisfies the following Duhamel formulation
\begin{align}
	\label{eq_duhamel_adjoint}
	\Braket{v| \T_t(\oX)u} =
\Braket {{e^{t\oG} v}|\oX {e^{tG}}u}
+\sum_{j=1}^{\Ndissip} \int_0^t \Braket{\oL_j e^{(t-s)G}v| \T_s(\oX) \, \oL_j e^{(t-s)\oG}u} ds,
\end{align}
for all \(u,v \in \D(\oG)\), \(\oX\in B(\xH)\), and \(t\geq 0\).\\

Iterating a fixed point procedure on \cref{eq_duhamel_adjoint} leads to a quantum dynamical semigroup $(\T_t^{min})_{t\geq 0}$:
\begin{theorem}[Theorem 3.22 of \cite{fagnolaQuantumMarkovSemigroups1999}]
	\label{th_existence_minimalsemigroup}
	Assume that $\oG$ is the generator of a contraction semigroup, $D(\oG)\subset \cap_j \D(\oL_j)$ and for all \(u \in \D(\oG)\), \cref{eq_L_dagLG} holds. Then, there exists a quantum dynamical semigroup $(\T_t^{\min})_{t\geq 0}$ solution of \cref{eq_weak}. Besides, for
	every quantum dynamical semigroup $(\tilde \T_t)_{t\geq 0}$ solving \cref{eq_weak}, we have
\begin{align*}
	\T_t^{\min}(\Id)\leq \tilde \T_t(\Id)\leq \Id.
\end{align*}
\end{theorem}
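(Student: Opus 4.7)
My plan is to reproduce the classical Davies/Chebotarev--Fagnola minimal semigroup construction: build $\T_t^{\min}$ as the monotone limit of a sequence of completely positive maps obtained by iterating the Duhamel identity \cref{eq_duhamel_adjoint}. Set
\[\mathcal{P}_t^{(0)}(\oX)=e^{t\oG^\dag}\,\oX\,e^{t\oG},\]
which is well-defined, linear, CP and satisfies $\mathcal{P}_t^{(0)}(\Id)\leq \Id$ since $\oG$ generates a contraction semigroup on $\xH$. Then define inductively, for $\oX\in B(\xH)$ and $u,v\in \D(\oG)$,
\[\Braket{v|\mathcal{P}_t^{(n+1)}(\oX)u}=\Braket{e^{t\oG}v|\oX\,e^{t\oG}u}+\sum_{j=1}^{\Ndissip}\int_0^t\Braket{\oL_j e^{(t-s)\oG}v\,|\,\mathcal{P}_s^{(n)}(\oX)\,\oL_j e^{(t-s)\oG}u}\,ds.\]
Each $\mathcal{P}_t^{(n)}$ is completely positive, and the sequence $\bigl(\mathcal{P}_t^{(n)}(\oX)\bigr)_n$ is monotonically increasing in $n$ for $\oX\geq 0$, as the Duhamel integral contributes a positive kernel at every iteration.

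\textbf{Uniform bound, limit and verification.} The pivotal step is to prove $\mathcal{P}_t^{(n)}(\Id)\leq \Id$ by induction on $n$. Assuming the bound at step $n$, assumption \cref{eq_L_dagLG} translates into $\sum_j \oL_j^\dag\oL_j=-(\oG+\oG^\dag)$ on $\D(\oG)$, so weakly on $\D(\oG)\times \D(\oG)$,
\[\sum_{j=1}^{\Ndissip}\int_0^t e^{(t-s)\oG^\dag}\oL_j^\dag\oL_j\,e^{(t-s)\oG}\,ds=\int_0^t\frac{d}{ds}\Bigl(e^{(t-s)\oG^\dag}e^{(t-s)\oG}\Bigr)ds=\Id-e^{t\oG^\dag}e^{t\oG},\]
which, plugged into the induction step, yields $\mathcal{P}_t^{(n+1)}(\Id)\leq \Id$. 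Since the sequence $\bigl(\mathcal{P}_t^{(n)}(\oX)\bigr)_n$ is increasing (for $\oX\geq 0$) and bounded above by $\|\oX\|\Id$, it converges in the strong operator topology to a positive operator $\T_t^{\min}(\oX)$; extending by linearity via the usual decomposition of self-adjoint then complex $\oX$ produces a CP map on all of $B(\xH)$. Passing to the limit in the Duhamel iteration shows that $\T_t^{\min}$ solves \cref{eq_duhamel_adjoint}, hence \cref{eq_weak}. The semigroup property is obtained by comparing the iteration for time $s+t$ with the composition of two iterations of lengths $s$ and $t$; normality and ultraweak continuity at $t=0$ follow from the monotone-limit representation and dominated convergence on matrix elements, together with the strong continuity of $(e^{t\oG})_{t\geq 0}$.

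\textbf{Minimality and main obstacle.} For the minimality statement, let $(\tilde\T_t)_{t\geq 0}$ be any quantum dynamical semigroup solving \cref{eq_weak}. Positivity of $\tilde\T_s(\oX)$ for $\oX\geq 0$, combined with \cref{eq_duhamel_adjoint}, immediately gives $\tilde\T_t(\oX)\geq \mathcal{P}_t^{(0)}(\oX)$, and iterating the same inequality inductively yields $\tilde\T_t(\oX)\geq \mathcal{P}_t^{(n)}(\oX)$ for every $n$, hence $\tilde\T_t\geq \T_t^{\min}$ in the sense of positive operators. Specializing to $\oX=\Id$ and invoking the general bound $\tilde\T_t(\Id)\leq \Id$ from the definition of a quantum dynamical semigroup finishes the proof. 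The main obstacle I anticipate is keeping every manipulation rigorous in the unbounded setting: the identity $\sum_j \oL_j^\dag\oL_j=-(\oG+\oG^\dag)$ is only available on $\D(\oG)$, and the telescoping $\int_0^t\frac{d}{ds}[\,e^{(t-s)\oG^\dag}e^{(t-s)\oG}\,]\,ds$ as well as the interchanges of limit and integral must be performed weakly against $u,v\in\D(\oG)$ before extending by density and normality; verifying that the limit $\T_t^{\min}$ itself retains normality and ultraweak continuity — properties not automatically preserved under monotone limits of general maps — requires delicate functional-analytic bookkeeping.
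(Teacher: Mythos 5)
The paper does not prove this statement: it is imported verbatim as Theorem 3.22 of the cited monograph of Fagnola, and your construction — iterating the Duhamel identity \eqref{eq_duhamel_adjoint} starting from $\oX\mapsto e^{t\oG^\dag}\oX e^{t\oG}$, proving $\mathcal{P}_t^{(n)}(\Id)\leq\Id$ by the telescoping identity derived from \eqref{eq_L_dagLG}, taking the monotone strong limit, and obtaining minimality by dropping the positive integral term and iterating — is exactly the standard Davies/Chebotarev--Fagnola argument that the cited reference carries out. Your sketch is correct in all its essential steps; the points you yourself flag (normality and ultraweak continuity of the monotone limit, and the semigroup property) are indeed the technically delicate parts that the reference handles in detail, but nothing in your outline would fail.
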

If the minimal semigroup is conservative, that is for every $t\geq0$, $\T_t^{\min}(\Id)=\Id$, then $\T_t^{\min}= \tilde \T_t$ for all $t$. In this case, there exists a unique quantum dynamical semigroup solution to the Lindblad equation. Note that, although $\xL^*(\Id)=0$ formally, conservativity of the minimal semigroup cannot be deduced from our assumptions because $\Id$ is not necessary in the domain of $\xL^*$. We also have the following counter-example:
\begin{example}[\cite{daviesQuantumDynamicalSemigroups1977}, Example 3.3]
	The minimal semigroup $(\T_t^{min})_{t\geq 0}$
	of the Lindblad equation $\dot \orho = D[\oa^{\dag2}](\orho)$
	is not conservative.
\end{example}

An important tool to study the conservativity of the minimal semigroup is the following representation of its resolvent, as established in \cite{chebotarevTheoryDynamicalSemigroups1990}.
For each $\lambda >0$, we define the completely positive maps
$P_\lambda \in \mathfrak L\left(B(\xH)\right)$
and
$Q_\lambda\in \mathfrak L\left(B(\xH)\right)$
as follows:
\begin{align}
	\braket{u|P_\lambda(\oX )v}&= \int_0^\infty e^{-\lambda s} \braket{e^{s\oG }u|\oX e^{s\oG }v} ds\\
	\braket{u|Q_\lambda(\oX )v}&= \sum_{j=1}^{\Ndissip} \int_0^\infty e^{-\lambda s} \braket{\oL_j e^{s\oG }u|\oX \oL_j e^{s\oG }v} ds
\end{align}
for $u,v\in \D(\oG )$ and $\oX \in B(\xH)$.
Note that $\norm{P_\lambda}_{\mathfrak L\left(B(\xH)\right)}\leq \frac{1}{\lambda}$;
using \cref{eq_L_dagLG} with an integration by parts also yields $\norm{Q_\lambda}_{\mathfrak L\left(B(\xH)\right)}\leq 1$.
Then, using
\cite[Theorem 3.1]{chebotarevSufficientConditionsConservativity1998},
the resolvent of the minimal quantum dynamical semigroup $(R_\lambda^{min})_{\lambda>0}$, characterized by
\begin{align}
	\braket{u|R_\lambda^{min}(\oX )v}= \int_0^\infty e^{-\lambda s} \braket{u|\T_s^{min}(\oX )v} ds,
\end{align}
satisfies,  for every $\lambda>0$ and $\oX \in B(\xH)$:
\begin{align}
	\label{eq_resolvent_min}
	R_\lambda^{min}(\oX )=\sum_{n=0}^\infty Q_\lambda^n(P_\lambda(\oX )),
\end{align}
with the series converging in the strong topology. Note also that the truncated series
$$R_\lambda^{(m)}(\oX )=\sum_{n=0}^m Q_\lambda^n(P_\lambda(\oX ))$$ obeys the recurrence relation
\begin{align}
	\label{eq_resolvent_rec}
	R_\lambda^{(0)}(\oX )=P_\lambda(\oX ), \quad R_\lambda^{(m+1)}(\oX )=P_\lambda(\oX )+ Q_\lambda(R_\lambda^{(m)}(\oX )).
\end{align}

The conservativity of the minimal semigroup is equivalent to $R_\lambda^{min}(\Id)=\frac{1}{\lambda}\Id$ for every $\lambda >0$. Chebotarev and Fagnola derived several necessary and sufficient conditions for this to hold in \cite{chebotarevSufficientConditionsConservativity1993,chebotarevSufficientConditionsConservativity1998,
fagnolaQuantumMarkovSemigroups1999}. In this context, the Lindbladian $\xL$ is called unital whenever the associated minimal semigroup is conservative.
Finally, the pre-dual semigroup \((\S_t)_{t\geq 0}\) on \(\xK^1\) is defined from \((\T_t)_{t\geq 0}\) through \cref{eq-semigroup-dual}.
Given an initial condition \(\orho_0\in\xK^1\),
we denote \(\orho_t = \S_t(\orho_0)\) the solution of the Lindblad equation.
The conservativity of the semigroup \((\T_t)_{t\geq 0}\) is equivalent to the trace-preserving property of \((\S_t)_{t\geq 0}\) on $\xK^1$.

Davies provided another sufficient and necessary condition \cite[Theorem 3.2]{daviesQuantumDynamicalSemigroups1977} linking conservativity with the domain of the generator of the pre-dual semigroup, i.e., the Lindbladian $\xL$:
\begin{proposition}{\cite[Proposition 3.32]{fagnolaQuantumMarkovSemigroups1999}}
	\label{prop_coreL}
	The linear manifold $\mathcal{U}$ generated by rank one operators
	\begin{align}
		\ket{u}\bra{v}, \qquad u,v\in \D(\oG )
	\end{align}
	is contained in the generator of predual semigroup $(\S_t^{min})$ of $(\T_t^{min})$ and
	\begin{align}
		\xL(\ket{u}\bra{v})=\ket{\oG u}\bra{v}+\ket{u}\bra{\oG v}+\sum_{j=1}^{\Ndissip} \ket{\oL_ju}\bra{\oL_jv}.
	\end{align}
	Besides, the following conditions are equivalent:
	\begin{enumerate}
		\item the linear manifold $\mathcal{U}$ is a core for $\xL$,
		\item the minimal semigroup $(\T_t^{min})$ is conservative.
	\end{enumerate}
\end{proposition}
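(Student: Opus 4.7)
My plan is to prove the two claims of the proposition separately: first that every $\ket{u}\bra{v}$ with $u,v\in\D(\oG)$ lies in $\D(\xL^{min})$ with the stated formula, and then the equivalence. For the first part, applying the duality $\xtr{\oX\S_t^{min}(\orho)}=\xtr{\T_t^{min}(\oX)\orho}$ to $\orho=\ket{u}\bra{v}$ and rewriting the weak formulation \cref{eq_weak} in terms of the pre-dual, I would obtain
\[
\xtr{\oX\bigl(\S_t^{min}(\ket{u}\bra{v})-\ket{u}\bra{v}\bigr)}=\int_0^t \xtr{\oX\,\S_s^{min}(\oA)}\,ds\qquad\forall\oX\in B(\xH),
\]
where $\oA=\ket{\oG u}\bra{v}+\ket{u}\bra{\oG v}+\sum_j\ket{\oL_j u}\bra{\oL_j v}$. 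The Cauchy--Schwarz bound $\sum_j\|\oL_j u\|\,\|\oL_j v\|<\infty$, whose summability follows from \cref{eq_L_dagLG}, shows $\oA\in\xK^1$. Arbitrariness of $\oX$ promotes the identity to $\xK^1$, and strong continuity of the pre-dual semigroup at $s=0$ then gives $\ket{u}\bra{v}\in\D(\xL^{min})$ with $\xL^{min}(\ket{u}\bra{v})=\oA$.

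For the implication $(1)\Rightarrow(2)$, I would polarize the hermitian identity \cref{eq_L_dagLG}: setting $g(u,v)=\braket{v|\oG u}+\braket{\oG v|u}+\sum_j\braket{\oL_j v|\oL_j u}$, one has $g(u,u)=0$ by \cref{eq_L_dagLG}, and since $g$ is sesquilinear the polarization identity gives $g\equiv 0$. Equivalently, $\xtr{\xL^{min}(\ket{u}\bra{v})}=0$ on $\mathcal U$. If $\mathcal U$ is a core, graph-norm continuity of the trace extends this vanishing to all of $\D(\xL^{min})$; combined with the invariance $\S_t^{min}(\D(\xL^{min}))\subset \D(\xL^{min})$, this yields $\tfrac{d}{dt}\xtr{\S_t^{min}(\orho)}=\xtr{\xL^{min}(\S_t^{min}(\orho))}=0$ on $\D(\xL^{min})$, hence trace preservation there and, by density, on $\xK^1$. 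The duality $\xtr{\S_t^{min}(\orho)}=\xtr{\T_t^{min}(\Id)\orho}$ then forces $\T_t^{min}(\Id)=\Id$.

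The substantive direction is $(2)\Rightarrow(1)$, where I aim to show that $(\lambda\Id-\xL^{min})(\mathcal U)$ is dense in $\xK^1$ for some $\lambda>0$; by Hahn--Banach it suffices to prove that any $\oX\in B(\xH)$ annihilating this set vanishes. The annihilator identity reads
\[
\lambda\braket{v|\oX u}=\braket{v|\oX\oG u}+\braket{\oG v|\oX u}+\sum_j\braket{\oL_j v|\oX\oL_j u}\qquad\forall u,v\in\D(\oG).
\]
Substituting $(u,v)\mapsto(e^{s\oG}u,e^{s\oG}v)$, multiplying by $e^{-\lambda s}$, integrating over $s\in[0,\infty)$ and integrating by parts on the two $\oG$-terms cancels the $P_\lambda$ contributions and yields $\oX=Q_\lambda(\oX)$ on $\D(\oG)$, hence on $\xH$ by density; iterating gives $\oX=Q_\lambda^n(\oX)$ for every $n$. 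Separately, integration by parts in $\braket{u|\lambda P_\lambda(\Id)u}=\lambda\int_0^\infty e^{-\lambda s}\|e^{s\oG}u\|^2\,ds$ together with \cref{eq_L_dagLG} produces the key identity $\lambda P_\lambda(\Id)+Q_\lambda(\Id)=\Id$, and plugging this into \cref{eq_resolvent_min} telescopes to $\lambda R_\lambda^{min}(\Id)=\Id-\lim_n Q_\lambda^n(\Id)$ in the strong topology, so conservativity is equivalent to $Q_\lambda^n(\Id)\to 0$ strongly. Reducing to self-adjoint $\oX$ by separating hermitian and anti-hermitian parts (both annihilate the set, since $\oX^*$ also does), the order bound $-\|\oX\|_\infty\Id\le\oX\le\|\oX\|_\infty\Id$ transported by the completely positive $Q_\lambda^n$ gives $|\braket{u|\oX u}|\le\|\oX\|_\infty\braket{u|Q_\lambda^n(\Id)u}\to 0$, forcing $\oX=0$. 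The main obstacle is the substitution-plus-integration-by-parts computation collapsing the annihilator equation to $\oX=Q_\lambda(\oX)$, after which the dual telescoping identity and complete positivity finish the argument.
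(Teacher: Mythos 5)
The paper does not actually prove this proposition; it is imported verbatim from the cited reference (Fagnola, Proposition 3.32), so there is no in-paper proof to compare against. Your reconstruction is correct and follows the standard argument from that source: the weak formulation \cref{eq_weak} dualized against rank-one operators gives membership in the domain of the pre-dual generator; polarization of \cref{eq_L_dagLG} plus graph-norm continuity of the trace gives $(1)\Rightarrow(2)$; and for $(2)\Rightarrow(1)$ the range-density criterion for cores, the collapse of the annihilator equation to $\oX=Q_\lambda(\oX)$ via the semigroup substitution and integration by parts, the telescoping identity $\lambda R_\lambda^{min}(\Id)=\Id-\lim_n Q_\lambda^n(\Id)$, and complete positivity of $Q_\lambda^n$ are exactly the ingredients used in the literature. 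All the steps you sketch go through (the reduction to self-adjoint $\oX$ works because $\mathcal U$ is closed under adjoints, and $Q_\lambda^n(\Id)$ is a decreasing sequence of positive contractions so its strong limit exists), and in this paper's setting the sum over $j$ is finite, which makes the trace-class verification of $\oA$ immediate.
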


\subsection{Regularity results}
\label{subsec_H_n}

Inspired by \cite{chebotarevLindbladEquationUnbounded1997} and the recent paper \cite{gondolfEnergyPreservingEvolutions2024}, we introduce some tools to obtain \textit{a priori} estimates. Let us consider a given self-adjoint operator $(\oLambda,D(\oLambda))$ on $\xH$ with $\oLambda \geq \Id$.
We define the Hilbert space $\xH^{\oLambda}=\left(\mathcal D(\oLambda^{1/2}), \norm{\cdot}_{\oLambda}\right)$ where $\norm{\cdot}_{\oLambda}$ is defined from the inner product
	\begin{align*}
		\braket{\varphi|\psi}_\oLambda=\braket{\oLambda^{1/2}\varphi|\oLambda^{1/2}\psi}.
	\end{align*}
When the context ensures unambiguity, we denote $\xH^0=\xH \supset \xH^1=\xH^\oLambda \supset \xH^2=\xH^{\oLambda^2} \ldots$
We also introduce $\xH^\infty= \cap_{k\geq 0}\xH^k$, which is dense in all the $\xH^k$ equipped with their norms.
	
	Associated to these spaces, we introduce a hierarchy of Banach spaces
	of trace-class operators
	\begin{align}
		\xK_{\oLambda^k}=\{\orho=\oLambda^{-k/2}\osigma \oLambda^{-k/2} \mid \osigma \in \xK^1(\xH), \, \norm{ \orho}_{\xK_{\oLambda^k}}=\norm{\osigma}_1 \}
	\end{align}
	Note that the mapping $\osigma \to \oLambda^{-k/2}\osigma \oLambda^{-k/2}$ is an isometry from $\xK^1$ to $\xK_{\oLambda^k}$.
	To prove that an element of $\xK^1$ belongs to $\xK_{\oLambda^k}$, we have the following characterization:
\begin{lemma}
    \label{lemma_carac_Kn}
    Let $\orho \in \xK^1_+$, and $k,r \in \R^+$. The following properties are equivalent:
    \begin{enumerate}
        \item $\orho\in \xK_{\oLambda^k}$ and $\norm{\orho}_{\xK_{\oLambda^k}}=r$, i.e., there exists $\osigma \in \xK^1_+$ with $\norm{\osigma}_1=r$ such that $\orho=\oLambda^{-k/2}\osigma \oLambda^{-k/2}$,
        \item \label{lemma_carac_Kn_item2} $\sup_{n>0} \xtr{n\oLambda^k R(n,-\oLambda^k) \orho}=r<\infty$.
    \end{enumerate}
\end{lemma}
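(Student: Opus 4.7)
The plan is to exploit the functional calculus of $\oLambda$. Observe first that $n\oLambda^k R(n,-\oLambda^k) = f_n(\oLambda)$ where $f_n(\lambda) = n\lambda^k/(n+\lambda^k)$. On the spectrum of $\oLambda$ (contained in $[1,+\infty)$), the functions $f_n$ are non-negative, bounded, and monotonically increase to $\lambda^k$ pointwise as $n\to\infty$. The whole proof then reduces to a monotone convergence argument.

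For the implication $(1)\Rightarrow(2)$, I would write $\orho = \oLambda^{-k/2}\osigma\oLambda^{-k/2}$ and use the fact that $f_n(\oLambda)$, $\oLambda^{-k/2}$ all commute as bounded Borel functions of $\oLambda$. Cyclicity of the trace (which is legitimate here since only bounded operators sandwich the trace-class $\osigma$) gives
\begin{equation*}
\operatorname{Tr}\bigl(f_n(\oLambda)\orho\bigr) = \operatorname{Tr}\bigl(g_n(\oLambda)\osigma\bigr),\qquad g_n(\lambda)=\frac{n}{n+\lambda^k}.
\end{equation*}
Since $g_n(\oLambda)$ is a family of positive bounded operators increasing to $\Id$, the monotone convergence theorem for the trace on $\xK^1_+$ yields $\sup_n \operatorname{Tr}(g_n(\oLambda)\osigma) = \operatorname{Tr}(\osigma) = \|\osigma\|_1 = r$.

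For the reverse implication $(2)\Rightarrow(1)$, I would use the spectral decomposition $\orho = \sum_i \mu_i \ket{\psi_i}\bra{\psi_i}$ of the positive trace-class operator $\orho$, together with the spectral resolution $\oLambda = \int \lambda\, dE(\lambda)$, to rewrite
\begin{equation*}
\operatorname{Tr}\bigl(f_n(\oLambda)\orho\bigr) = \sum_i \mu_i \int f_n(\lambda)\,d\|E(\lambda)\psi_i\|^2.
\end{equation*}
Monotone convergence applied jointly to the sum and to each integral shows that the supremum equals $\sum_i \mu_i \int \lambda^k\,d\|E(\lambda)\psi_i\|^2$. The finiteness assumption $r<\infty$ therefore forces $\psi_i\in\D(\oLambda^{k/2})$ for every $i$ with $\mu_i>0$ and $\sum_i \mu_i \|\oLambda^{k/2}\psi_i\|^2 = r$. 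Setting
\begin{equation*}
\osigma := \sum_i \mu_i \, \ket{\oLambda^{k/2}\psi_i}\bra{\oLambda^{k/2}\psi_i}
\end{equation*}
produces a positive trace-class operator with $\|\osigma\|_1 = \operatorname{Tr}(\osigma) = r$, and a direct computation using $\oLambda^{-k/2}\oLambda^{k/2}\psi_i = \psi_i$ (valid since $\psi_i\in\D(\oLambda^{k/2})$) gives back $\oLambda^{-k/2}\osigma\oLambda^{-k/2} = \orho$.

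The only real subtlety is making sure the formal manipulations with the unbounded operator $\oLambda^{k/2}$ are justified; this is handled cleanly by passing through the spectral decomposition of $\orho$, which reduces everything to scalar positive measures where Tonelli/monotone convergence applies without difficulty. Conversely in the easy direction, sandwiching $\osigma$ by the bounded operator $\oLambda^{-k/2}$ ensures every trace is well-defined and cyclicity is unambiguous.
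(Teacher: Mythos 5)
Your proof is correct. The easy direction $(1)\Rightarrow(2)$ is essentially the paper's argument in different clothing: the paper also reduces $\xtr{n\oLambda^k R(n,-\oLambda^k)\,\oLambda^{-k/2}\osigma\oLambda^{-k/2}}$ to $\xtr{nR(n,-\oLambda^k)\osigma}$ and passes to the limit, though it invokes strong resolvent convergence together with the coincidence of the weak and $\sigma$-weak topologies on bounded sets rather than your monotone-convergence phrasing. The real divergence is in $(2)\Rightarrow(1)$. The paper stays at the operator level: it shows that $\bigl(n\oLambda^k R(n,-\oLambda^k)\bigr)^{1/2}\orho^{1/2}\ket{\psi}$ is uniformly bounded, uses closedness of $\oLambda^{k/2}$ (via weak compactness) to conclude that $\xi=\oLambda^{k/2}\orho^{1/2}$ is a well-defined bounded operator, and then shows $\xi$ is Hilbert--Schmidt, recovering $\osigma$ without ever diagonalizing $\orho$ or writing down the spectral measure of $\oLambda$. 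You instead diagonalize $\orho$ and reduce everything to positive scalar measures $d\|E(\lambda)\psi_i\|^2$, where Tonelli and monotone convergence finish the job. Both routes are sound and produce the same $\osigma=\oLambda^{k/2}\orho\oLambda^{k/2}$; yours is more elementary and makes the domain issues completely transparent, while the paper's avoids the spectral decomposition of $\orho$ and packages the conclusion directly as "$\oLambda^{k/2}\orho^{1/2}$ is Hilbert--Schmidt," which is the form actually reused elsewhere (e.g.\ in the proof of \cref{lem_apriori_estimate_sans_H}). The only point worth making explicit in your write-up is the trace-norm convergence of the series defining $\osigma$, which you correctly note follows from $\sum_i\mu_i\|\oLambda^{k/2}\psi_i\|^2=r<\infty$.
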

\begin{proof}
	Assume $\orho=\oLambda^{-k/2}\osigma \oLambda^{-k/2}$ with $\osigma \in \xK^1_+$. We have
    \begin{align*}
        \xtr{n \oLambda^k R(n,-\oLambda^k) \orho}&=\xtr{\osigma^{1/2} \oLambda^{-k/2} n \oLambda^k R(n,-\oLambda^k) \oLambda^{-k/2} \osigma^{1/2}}\\
        &=\xtr{\osigma^{1/2} nR(n,-\oLambda^k) \osigma^{1/2}}\leq \xtr{\osigma}
    \end{align*}
    where we used $nR(n,-\oLambda^k)=(\Id+n^{-1} \oLambda^k)^{-1}\leq \Id$. Besides, $nR(n,-\oLambda^k)\xrightarrow[n\to \infty]{} \Id$ in the strong operator topology by well-known properties of the resolvent. Thus, it converges in the weak operator topology. Besides, as weak and $\sigma$-weak topologies coincide on bounded sets,
    \begin{align*}
        \lim_{n\to \infty }\xtr{nR(n,-\oLambda^k) \osigma}=\xtr{\osigma}.
    \end{align*}
    Conversely, assume \cref{lemma_carac_Kn_item2}. Then, for any $\psi \in \xH$ with $\norm{\psi}=1$, we have
    \begin{align*}
        \norm{ \left( n\oLambda^k R(n,-\oLambda^k) \right)^{1/2}\orho^{1/2} \ket{\psi}}^2&\leq \xtr{n\oLambda^k R(n,-\oLambda^k)\orho}\\
		& \leq \sup_{n>0} \xtr{n\oLambda^k R(n,-\oLambda^k) \orho}< \infty.
    \end{align*}
	The sequence $\left(n R(n,-\oLambda^k) \right)^{1/2}$ converges strongly towards $\Id$ while keeping a norm smaller than $1$.
	The sequence $\left( n R(n,-\oLambda^k) \right)^{1/2}\orho^{1/2}\ket{\psi}$ is thus bounded in $\D(\oLambda^{k/2})$. As a consequence, $\orho^{1/2}\ket{\psi}$ belongs to the domain of $\oLambda^{k/2}$. Hence, $\xi= \oLambda^{k/2}\orho^{1/2}$ is a well-defined bounded operator. Using again the $\sigma$-weak convergence of $nR(n,-\oLambda^{k})$, we get
    \begin{align*}
        \xtr{n\oLambda^{k}R(n,-\oLambda^{k}) \orho}= \xtr{\xi^\dag R(n,-\oLambda^{k})\xi}\xrightarrow[n \to \infty]{}\xtr{\xi^\dag \xi}
    \end{align*}
    we deduce that $\xi$ is an Hilbert-Schmidt operator, $\orho=\oLambda^{-1/2} \xi^\dag \xi \oLambda^{-1/2}$ and $\norm{\xi^\dag \xi}_1=r$.

\end{proof}

The forthcoming proposition will serve to establish a connection between the regularity of the state $\orho$ and bounds on $\xL(\orho)$.

\begin{proposition}
    \label{prop_duality_K}
    Let $k\geq 0$. For every $\orho \in \xK_{\oLambda^k}$ and $\oM_1,\oM_2\in B(\xH^k,\xH)$,
	we have $\oM_1 \orho \oM_2^\dag \in \xK^1$ and
        \begin{align}
            \norm{\oM_1 \orho \oM_2^\dag}_1 \leq \norm{\oM_1}_{B(\xH^k,\xH)} \norm{\oM_2}_{B(\xH^k,\xH)} \norm{ \orho}_{\xK_{\oLambda^k}}
        \end{align}
\end{proposition}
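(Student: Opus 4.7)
The plan is to factor the product $\oM_1 \orho \oM_2^\dag$ so as to isolate $\osigma$ sandwiched between two genuinely bounded operators on $\xH$, and then apply the standard Hölder-type trace inequality $\trnorm{ABC} \leq \norm{A}_\infty \trnorm{B} \norm{C}_\infty$ valid for $A,C$ bounded and $B$ trace class.

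First I would unfold the definitions. Since $\orho \in \xK_{\oLambda^k}$, there exists $\osigma \in \xK^1$ with $\trnorm{\osigma} = \norm{\orho}_{\xK_{\oLambda^k}}$ such that $\orho = \oLambda^{-k/2}\osigma \oLambda^{-k/2}$. Next, I would record the elementary but crucial observation that $\oLambda^{-k/2}$ is (via the spectral theorem) an isometric isomorphism from $\xH$ onto $\xH^k = (\D(\oLambda^{k/2}), \norm{\oLambda^{k/2}\cdot})$, with inverse $\oLambda^{k/2}$. Consequently, for any $\oM \in B(\xH^k,\xH)$, the composition $\oM \oLambda^{-k/2}$ is well-defined on $\xH$ and extends to a bounded operator on $\xH$ with $\norm{\oM \oLambda^{-k/2}}_\infty = \norm{\oM}_{B(\xH^k,\xH)}$, since for $\psi \in \xH$ we have $\norm{\oM \oLambda^{-k/2}\psi} \leq \norm{\oM}_{B(\xH^k,\xH)} \norm{\oLambda^{k/2}\oLambda^{-k/2}\psi} = \norm{\oM}_{B(\xH^k,\xH)}\norm{\psi}$, with equality achievable by the definition of the operator norm on $\xH^k$.

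Armed with this, the factorization becomes
\begin{align*}
\oM_1 \orho \oM_2^\dag = \bigl(\oM_1 \oLambda^{-k/2}\bigr)\, \osigma\, \bigl(\oM_2 \oLambda^{-k/2}\bigr)^\dag,
\end{align*}
where both outer factors are bounded with operator norms $\norm{\oM_1}_{B(\xH^k,\xH)}$ and $\norm{\oM_2}_{B(\xH^k,\xH)}$, respectively (using $\norm{A^\dag}_\infty = \norm{A}_\infty$). Applying the trace-ideal inequality then yields
\begin{align*}
\trnorm{\oM_1 \orho \oM_2^\dag}
&\leq \norm{\oM_1 \oLambda^{-k/2}}_\infty \, \trnorm{\osigma} \, \norm{\oM_2 \oLambda^{-k/2}}_\infty \\
&= \norm{\oM_1}_{B(\xH^k,\xH)}\, \norm{\oM_2}_{B(\xH^k,\xH)}\, \norm{\orho}_{\xK_{\oLambda^k}},
\end{align*}
which in particular guarantees $\oM_1 \orho \oM_2^\dag \in \xK^1$.

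There is no genuine obstacle here; the only care required is the verification that $\oLambda^{-k/2}$ has range exactly $\D(\oLambda^{k/2})$ and that the norm identity $\norm{\oM \oLambda^{-k/2}}_\infty = \norm{\oM}_{B(\xH^k,\xH)}$ holds, so the inequality is stated with the correct constants. Both points rest on the functional calculus for the self-adjoint operator $\oLambda \geq \Id$, which ensures $\oLambda^{-k/2}$ is a bounded bijection from $\xH$ onto $\xH^k$ with the declared isometry property.
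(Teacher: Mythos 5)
Your proof is correct and follows essentially the same route as the paper: factor $\oM_1\orho\oM_2^\dag$ as $(\oM_1\oLambda^{-k/2})\,\osigma\,(\oM_2\oLambda^{-k/2})^\dag$, use the isometry $\norm{\oM\oLambda^{-k/2}}_\infty=\norm{\oM}_{B(\xH^k,\xH)}$, and conclude with the trace-ideal inequality. You merely spell out the norm identities that the paper states without comment.
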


\begin{proof}
Since $\orho \in {\xK_{\oLambda^k}}$, there exists $\osigma \in \xK^1$ such that $\orho=\oLambda^{-k/2}\osigma \oLambda^{-k/2}$ and $\norm{\orho}_{\xK_{\oLambda^k}}=\norm{\osigma}_1$.
	Moreover, $\norm{\oM_1\oLambda^{-k/2}}_\infty=\norm{\oM_1}_{B(\xH^k,\xH)}$ and $\norm{\oLambda^{-k/2}\oM_2^\dag}_\infty=\norm{\oM_2 \oLambda^{-k/2}}_\infty=\norm{\oM_2}_{B(\xH^k,\xH)}$,
	which concludes the proof.
\end{proof}
\begin{remark}
	\label{rmk:ambiguous}
	In this paper, we consider $\oM_2$ both as an unbounded operator on $\xH$ and as an element of $B(\xH^k,\xH)$. The notation $\oM_2^\dag$ may lead to ambiguity since it refers to both the unbounded operator from $\D(\oM_2^\dag)$ to $\xH$ and the continuous operator in $B(\xH^*,\xH^{k*})$. However, they coincide on $\D(\oM_2^\dag)$ under the duality induced by the pivot space $\xH$. Henceforth, we adopt the convention that the operator on the right of $\orho\in \xK_{\oLambda^k}$ represents the continuous operator from $\xH$ to $(\xH^k)^*$. Note also that, following the classical literature on Sobolev spaces, we could define the negative Sobolev space $\xH^{-k}\subset \xH$ as $\xH^{k*}$.
\end{remark}

\section{Time discretization in infinite dimension}
\label{sec_infinite_dim_schemes}
\subsection{Infinite dimensional schemes}
In this section, $\oG$ and $\oL_j$ denote the unbounded operators defining our Lindblad equation.
Transposing \cref{subsec_CPTP_schemes}, we define a time discretization scheme $\xF_{\Delta t}$ by
\begin{align}
    \notag \oQ&= \sum_{j=1}^{\Ndissip} \oL_j^\dag \oL_j, &
    \oM_{0,\Delta t}&= (\Id-i\frac{\Delta t}{2}  \oH)(\Id + i\frac{\Delta t}{2}  \oH)^{-1} (\Id- \frac{\Delta t}{2}  \oQ)\\
    \notag \oM_{j,\Delta t}&= \sqrt{\Delta t}\oL_j, &
    \oS_{\Delta t}&= \sum_{j=0}^k \oM_{j,\Delta t}^\dag \oM_{j,\Delta t}= \Id + \frac{\Delta t^2}{4} \oQ^2 ,\\
    \label{eq_def_scheme} \widetilde \oM_{j,\Delta t}&= \oM_{j,\Delta t} \oS_{\Delta t}^{-1/2},&
    \xF_{\Delta t}(\orho)&= \sum_{j=0}^k \widetilde \oM_{j,\Delta t} \orho \widetilde \oM_{j,\Delta t}^\dag.
\end{align}

Our main task is now to prove that, in this infinite-dimensional setting and under suitable assumptions, this scheme is still well-defined
and provides a first-order approximation of the continous solution to the Lindblad equation, which is
the object of our main \cref{th_onedissipator_firstorder}.
Let us start with some basic remarks about the well-posedness of $\xF$. Assuming that the positive operator $\oQ$ defined on $ \cap_{j=1}^{\Ndissip} \D(\oL_j)$ is essentially self-adjoint, $\oS_{\Delta t}^{-1/2}$ is a well-defined bounded operator on $\xH$. In addition, note that
$(\Id-i\frac{\Delta t}{2}  \oH)(\Id + i\frac{\Delta t}{2}  \oH)^{-1}$ is a unitary operator and an approximation of $e^{-i \Delta t \oH}$. In particular,
\begin{align*}
    \widetilde \oM_{0,\Delta t}^\dag \widetilde \oM_{0,\Delta t} =   \oS_{\Delta t}^{-1/2} (\Id- \frac{\Delta t}{2}\oQ)^2  \oS_{\Delta t}^{-1/2} \leq \Id, \qquad&\widetilde \oM_{j,\Delta t}^\dag \widetilde \oM_{j,\Delta t} \leq   \oS_{\Delta t}^{-1/2} \oQ  \oS_{\Delta t}^{-1/2} \leq \Id.
\end{align*}
We deduce that $\widetilde \oM_{0,\Delta t}$ and $\widetilde \oM_{j,\Delta t}$ are well-defined and bounded. In fact, we again have
\begin{align*}
    \sum_{j=0}^k \widetilde \oM_{j,\Delta t}^\dag \widetilde \oM_{j,\Delta t}=\Id.
\end{align*}
Thus $\xF_{\Delta t}$ is a CPTP map on $B(\xH)$.

\begin{remark}
    \label{rmk_other_treatment_of_H}

The results of this section (including \cref{th_onedissipator_firstorder}) can be straightforwardly adapted
	to accommodate the
    following variations of the scheme $\xF_{\Delta t}$:
\begin{itemize}
    \item Replace $\oM_{j,\Delta t}$ for $1 \leq j \leq \Ndissip$ by $(\Id-i\frac{\Delta t}{2}  \oH)(\Id + i\frac{\Delta t}{2}  \oH)^{-1}\oM_{j,\Delta t}$, that is performing the splitting consisting of first computing the dynamics corresponding to the non-Hamiltonian part, then applying the Hamiltonian evolution.
    \item Remove $(\Id-i\frac{\Delta t}{2}  \oH)(\Id + i\frac{\Delta t}{2}  \oH)^{-1}$ from $\oM_{0,\Delta t}$ but applies it to all $\oM_{j,\Delta t}$ for $0 \leq j\leq N$ on the right. This corresponds to the first order splitting starting with the Hamiltonian part.
\end{itemize}
Crucially, in all these variations, $\oS_{\Delta t}$ does not depend on $\oH$.
In fact, without extra assumptions, we are not able to replace $\oM_{0,\Delta t}$ by the
	(perhaps more natural) choice $(\Id+\Delta t \oG)$ as this would imply that $\oS_{\Delta t}$ depends on $\oG^\dag \oG$ instead of $\oQ^2$, and $\oQ^2$ is not controlled by $\oG^\dag \oG$ which causes issues to adapt step 3 of \cref{sec_proof_estimat_eq_M1}. 
\end{remark}

\subsection{Assumptions and some \textit{a priori} estimates}
\subsubsection{Simplified assumption when there is no Hamiltonian}
We start with the case $\oH=0$.

\begin{assumption}
    \label{assump_sansH}
    We consider a finite set of closed operators $(\oL_j)_{1 \leq j \leq \Ndissip}$ such that
    \begin{enumerate}
        \item $\cap_j\D(\oL_j^\dag \oL_j)$ is dense in $\xH$ and the operator $\sum_{j=1}^{\Ndissip} \oL_j^\dag \oL_j$ defined on $\cap_j\D(\oL_j^\dag \oL_j)$ is essentially self-adjoint. We denote $(\oQ,\D(\oQ))$ its closure. Additionally, we define $\oG=-2\oQ$ with domain $ \D(\oG)=\D(\oQ)$.
        \item \label{item_apriorisansH}
        We consider the positive operator $\oLambda = \oQ+\Id >0$, and introduce, as in \cref{subsec_H_n}, the Hilbert space $\xH^n$, that is $\D(\oLambda^{n/2})\cong \D(\oG^{n/2})$ with inner product $\braket{\oLambda^{n/2}\cdot |\oLambda^{n/2}\cdot }$. We also assume that $\oL_j$ belong to $B(\xH^5,\xH^4)$ and there exists a constant $\gamma\geq 0$ such that for all $u\in \xH^6$
        \begin{align}
            \label{apriori_sans_H}
            2\operatorname*{Re}\braket{\oLambda^2 \oG u| \oLambda^2 u}+ \sum_{j=1}^{\Ndissip} \|\oLambda^2\oL_ju\|^2\leq \gamma \|\oLambda^2u\|^2
        \end{align}
    \end{enumerate}
    
\end{assumption}

Formally, \cref{apriori_sans_H} means
$\xL^*(\oLambda^4)\leq \gamma \oLambda^4$. This assumption is the key element to prove \textit{a priori} estimates.

Let us check that \cref{assump_sansH} implies that the associated Lindblad equation admits a minimal semigroup. Indeed, for every $u\in \cap_j\D(\oL_j^\dag \oL_j)$, we have from the definition of $\oG$
    \begin{align*}
        2 \operatorname*{Re}\braket{\oG u|u}=-\sum_{j=1}^{\Ndissip}\|\oL_j u\|^2.
    \end{align*}
    As the $\oL_j$ are closed, this implies that $\D(\oG)\subset \cap_{j=1}^{\Ndissip} \D(\oL_j)$. Besides, $\oG=-2\oQ$ is self-adjoint and dissipative, thus the generator of a semigroup of contractions. Hence, there exists a minimal quantum dynamical semigroup solution of the Lindblad equation by \cref{th_existence_minimalsemigroup}.

    Note also that, by definition of the spaces $\xH^n$, $\oQ \in B(\xH^{n+2},\xH^n)$ for all even number $n$. On the other hand, $\oL_j^\dag \oL_j \leq \oQ$ implies that $\oL_j \in B(\xH^1,\xH)$. Together with the assumption that $\oL_j\in B(\xH^5,\xH^4)$, we get by interpolation (see \cref{app_interpol}) that $\oL_j \in B(\xH^{n+1},\xH^{n})$ for $0\leq n \leq 5$.
\subsubsection{Assumption for the general case}
When $\oH\neq0$, there is no longer a canonical choice of reference operator $\oLambda$. We assume that one can find a self-adjoint operator $\oLambda\geq \Id$ such that
    \begin{assumption}
        \label{assump_apriori}
        The operator $\oLambda\geq \Id$ is self-adjoint and
    \begin{enumerate}
        \item \label{asp1} The operators $(\oL_j,\xH^\infty)$ are closable operators for $1 \leq j \leq \Ndissip$,  $\sum_j \oL^\dag_j \oL_j$ and $\oH$ are essentially self-adjoint on $\xH^\infty$ and relatively bounded with respect to $\oLambda$,
        \item \label{asp2} $\oH \in B(\xH^6,\xH^4)$, $\oL_j \in B(\xH^5,\xH^4)$, and $\oQ \in B(\xH^6,\xH^4)$,
        \item  \label{asp3} There exists $\gamma \geq 0$ such that $\forall u \in \xH^6$, \cref{apriori_sans_H} holds:
        \begin{align*}
                %\label{apriori_avec_H}
                2\operatorname*{Re}\braket{\oLambda^2 \oG u| \oLambda^2 u}+ \sum_{j=1}^{\Ndissip} \|\oLambda^2\oL_ju\|^2\leq \gamma \|\oLambda^2u\|^2.
        \end{align*}
        \item \label{asp4} The closure of the operator $(\oG,\xH^\infty)$ is the generator of a semigroup of contractions on $\xH$. Besides, $\xH^4$ is $\oG$-compatible, in the sense that the restriction of the semigroup $(e^{t\oG})_{t\geq 0}$ to $\xH^4$ is a strongly continuous semigroup in $\xH^4$. Finally, $\xH^6$ is stable under the application of the semigroup $(e^{t\oG})_{t\geq 0}$.
    \end{enumerate}
\end{assumption}
\begin{remark}
    \begin{itemize}
        \item \cref{asp1} implies that there exists $\nu>0$ such that $\oLambda \geq \nu \sum_k \oL_k^\dag \oL_k$. As a consequence $\oL_k \oLambda^{-1/2} \in B(\xH)$, i.e. $\oL_k \in B(\xH^1,\xH)$. Similarly $\oH,\oQ \in B(\xH^2, \xH)$.
        \item Using \cref{asp1,asp2} and interpolation (see for e.g. \cref{lem_interpol} of \cref{app_interpol}), one has $\oL_k \in B(\xH^{n+1},\xH^{n})$ for $1\leq n \leq 4$ and $\oH,\oQ \in B(\xH^{n+2},\xH^{n})$ for $0\leq n \leq 4$
        \item \cref{asp4} is a technical assumption. Dissipativity of $\oG$ in $\xH$ and of $\oG-\gamma \Id$ in $\xH^4$ follows from \cref{apriori_sans_H}, but we require maximal dissipativity to ensure existence of a quasi-contraction semigroup.
    \end{itemize}
    
\end{remark}
Under \cref{assump_apriori}, the existence of the minimal semigroup follows from \cref{th_existence_minimalsemigroup}.
\subsubsection{\textit{A priori} estimate}
We have the following \textit{a priori} estimate:
\begin{lemma}
    \label{prop_regularity}
    Under \cref{assump_sansH} or \cref{assump_apriori}, the minimal semigroup $(\T_t^{min})_{t\geq 0}$ is conservative and the following \textit{a priori} estimate holds for every $ t \geq 0$:
        \begin{align}
            \label{eq_apriori_withoutH}
            \sup_n \braket{u |\T_t^{min}(n\oLambda^4R(n,-\oLambda^4))u} \leq e^{\gamma t}\| \oLambda^2 u \|^2, \quad \forall u \in \xH^4,
        \end{align}
        where $\gamma \geq 0$ is the constant defined in \cref{assump_sansH} or \cref{assump_apriori}.
\end{lemma}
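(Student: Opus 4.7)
The overall plan is to combine four ingredients: the Chebotarev--Fagnola iterative construction of the minimal semigroup as a monotone limit $\T_t^{(m)}\uparrow\T_t^{\min}$ obtained from the Duhamel representation~\eqref{eq_duhamel_adjoint}; the a priori bound~\eqref{apriori_sans_H} applied along the orbit of $e^{t\oG}$; a regularization of the unbounded $\oLambda^4$ by $\oC_n\coloneqq n\oLambda^4R(n,-\oLambda^4)$; and a density extension from $\xH^6$ to $\xH^4$ at each induction level. The main obstacle will be the regularity bookkeeping: one must track, at each step, the Sobolev scale in which every vector lives so that every composition of unbounded operators is meaningful. The interpolated inclusions $\oL_j\in B(\xH^{n+1},\xH^n)$ listed just after \cref{assump_apriori}, together with the stability of $\xH^6$ under $(e^{t\oG})_{t\geq 0}$ postulated there, are precisely what makes the cascade $\xH^6\xrightarrow{e^{t\oG}}\xH^6\xrightarrow{\oL_j}\xH^4$ legitimate.

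The first step is a Gronwall estimate for the contraction semigroup $e^{t\oG}$. For $u\in\xH^6$, stability of $\xH^6$ lets me differentiate $\phi(t)\coloneqq\|\oLambda^2 e^{t\oG}u\|^2$, and~\eqref{apriori_sans_H} applied to $e^{t\oG}u$ yields
\begin{equation*}
\phi'(t)=2\operatorname{Re}\braket{\oLambda^2\oG e^{t\oG}u|\oLambda^2 e^{t\oG}u}\leq \gamma\phi(t)-\sum_{j=1}^{\Ndissip}\|\oLambda^2\oL_j e^{t\oG}u\|^2.
\end{equation*}
Dropping the dissipative term and applying Gronwall gives $\phi(t)\leq e^{\gamma t}\phi(0)$, extended to $u\in\xH^4$ by density. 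The functional calculus provides $0\leq\oC_n\leq\oLambda^4$, $\oC_n\uparrow\oLambda^4$ strongly, and $\oC_n\geq\frac{n}{n+1}\Id$ since $\oLambda\geq\Id$.

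The heart of the proof is then an induction on $m$ establishing $\braket{u|\T_t^{(m)}(\oC_n)u}\leq e^{\gamma t}\|\oLambda^2 u\|^2$ for every $u\in\xH^4$ and every $n$. The case $m=0$ reduces to $\braket{u|\T_t^{(0)}(\oC_n)u}=\|\oC_n^{1/2}e^{t\oG}u\|^2\leq\|\oLambda^2 e^{t\oG}u\|^2\leq e^{\gamma t}\|\oLambda^2 u\|^2$. For the inductive step I work first on $u\in\xH^6$: $\xH^6$-stability gives $e^{(t-s)\oG}u\in\xH^6\subset\xH^5$, hence $v\coloneqq\oL_j e^{(t-s)\oG}u\in\xH^4$, so the induction hypothesis at level $m$ applies to $v$ and bounds the integrand by $e^{\gamma s}\|\oLambda^2\oL_j e^{(t-s)\oG}u\|^2$. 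Rewriting~\eqref{apriori_sans_H} as $\sum_j\|\oLambda^2\oL_j e^{(t-s)\oG}u\|^2\leq\gamma\phi(t-s)-\phi'(t-s)$ and integrating by parts with $\psi(s)=\phi(t-s)$ yields
\begin{equation*}
\sum_j\int_0^t e^{\gamma s}\|\oLambda^2\oL_j e^{(t-s)\oG}u\|^2\,ds\leq e^{\gamma t}\|\oLambda^2 u\|^2-\|\oLambda^2 e^{t\oG}u\|^2,
\end{equation*}
which combined with the $\T_t^{(0)}(\oC_n)$ contribution gives the level-$(m+1)$ bound on $\xH^6$; density in $\xH^4$-norm then promotes it to all of $\xH^4$.

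To conclude, monotone convergence $\T_t^{(m)}(\oC_n)\uparrow\T_t^{\min}(\oC_n)$ transfers the estimate to the minimal semigroup, and taking $\sup_n$ yields~\eqref{eq_apriori_withoutH}. Conservativity then follows from the Chebotarev--Fagnola sufficient criterion applied with $\oC=\oLambda^4\geq\Id$, since~\eqref{apriori_sans_H} is precisely the quadratic-form statement $\xL^*(\oLambda^4)\leq\gamma\oLambda^4$ valid on the core $\xH^6$; equivalently, the Laplace transform of the semigroup bound gives $\braket{u|R_\lambda^{\min}(\oC_n)u}\leq(\lambda-\gamma)^{-1}\|\oLambda^2 u\|^2$, which when inserted into the identity $\lambda R_\lambda^{(m)}(\Id)=\Id-Q_\lambda^{m+1}(\Id)$ (obtained inductively from~\eqref{eq_resolvent_rec} using $\lambda P_\lambda(\Id)+Q_\lambda(\Id)=\Id$) forces $Q_\lambda^{m+1}(\Id)\to 0$, i.e.\ $\lambda R_\lambda^{\min}(\Id)=\Id$.
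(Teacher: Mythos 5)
Your proof is correct, but it runs the induction in the time domain rather than in the resolvent domain as the paper does. The paper's proof (\cref{sec_proof_prop_regularity_sans_H}) performs the same induction on $m$, but on the truncated resolvents $R_\lambda^{(m)}=\sum_{k\le m}Q_\lambda^k(P_\lambda(\cdot))$ of \cref{eq_resolvent_rec}, proving $(\lambda-\gamma)\braket{u|R_\lambda^{(m)}(n\oLambda^4R(n,-\oLambda^4))u}\le\|\oLambda^2u\|^2$ for $\lambda>\gamma$; the inductive step uses exactly your ingredients (the bound \cref{apriori_sans_H} along the orbit $e^{s\oG}u$ for $u\in\xH^6$, an integration by parts on $\int_0^\infty e^{-\lambda s}\tfrac{d}{ds}\|\oLambda^2e^{s\oG}u\|^2\,ds$, then density of $\xH^6$ in $\xH^4$), and the two computations are Laplace transforms of one another. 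The trade-off is in the final passage to $\T_t^{\min}$: you invoke the monotone convergence $\T_t^{(m)}\uparrow\T_t^{\min}$ of the Duhamel iterates (part of the construction behind \cref{th_existence_minimalsemigroup}) and obtain $e^{\gamma t}$ directly from your Gronwall estimate, whereas the paper must additionally convert its resolvent bound into a semigroup bound via the exponential formula $\T_t^{\min}(\oX)=s\text{-}\lim_k(\tfrac kt R^{\min}_{k/t})^k\oX$, picking up $e^{\gamma t}$ as $\limsup_k(\tfrac{k/t}{k/t-\gamma})^k$. Your route is arguably more direct; the paper's meshes with the resolvent representation \cref{eq_resolvent_min} it has already set up and reuses for conservativity.

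One caveat on your closing ``equivalently'' remark: the identity $\lambda R_\lambda^{(m)}(\Id)=\Id-Q_\lambda^{m+1}(\Id)$ is correct, but your bound on $R_\lambda^{\min}(n\oLambda^4R(n,-\oLambda^4))$ does not by itself force $Q_\lambda^{m+1}(\Id)\to0$ through that identity --- it only shows the decreasing sequence $Q_\lambda^{m+1}(\Id)$ converges to $\Id-\lambda R_\lambda^{\min}(\Id)\ge0$. The actual Chebotarev--Fagnola argument deduces $Q_\lambda^m(\Id)\to0$ from the summability of $\sum_k\braket{u|Q_\lambda^{k+1}(\Id)u}=\sup_n\braket{u|R_\lambda^{\min}(F_n)u}$ with $F_n=\sum_j\oL_j^\dag\oL_j\,nR(n,-\oLambda^4)$, which is where the resolvent estimate actually enters. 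Since your primary justification is the citation of that criterion --- which is also all the paper does at this point --- this is a blemish in an optional aside, not a gap in the proof.
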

The proof primarily involves establishing estimates by utilizing the representation of the resolvent of the minimal semigroup, as provided in \cref{eq_resolvent_min}. We defer this proof to \cref{sec_proof_prop_regularity_sans_H}.

%\begin{remark}
%    In \cref{item_apriorisansH} of \cref{assump_sansH}, $\xH^5$ can be replaced by any core $D$ of $\oLambda^2$, invariant by $e^{t\oG}$ and such that $\oL_j D\subset \xH^4$.
%\end{remark}

\begin{proposition}
    \label{lem_apriori_estimate_sans_H}
    Under \cref{assump_sansH} or \cref{assump_apriori}, the restriction of the Lindblad semigroup $\S_t$ to $\xK_{\oLambda^4}$ is a $\gamma$-quasi contraction semigroup on $\xK_{\oLambda^4}$
\end{proposition}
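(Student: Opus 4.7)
The plan is to combine three ingredients: the characterization of membership in $\xK_{\oLambda^4}$ given by \cref{lemma_carac_Kn}, the \textit{a priori} estimate \eqref{eq_apriori_withoutH} of \cref{prop_regularity}, and the Schrödinger/Heisenberg duality \eqref{eq-semigroup-dual}. The heart of the argument is to prove the quasi-contraction bound for positive $\orho$; from there, a decomposition of $\osigma=\oLambda^2\orho\oLambda^2\in\xK^1$ into its positive/negative and real/imaginary parts extends the bound to all of $\xK_{\oLambda^4}$.

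Fix $\orho\in\xK^1_+\cap\xK_{\oLambda^4}$ and write $\orho=\oLambda^{-2}\osigma\oLambda^{-2}$ with $\osigma\in\xK^1_+$ and $\|\osigma\|_1=\|\orho\|_{\xK_{\oLambda^4}}$. For each $n\geq 1$ the operator $n\oLambda^4R(n,-\oLambda^4)=n\oLambda^4(n\Id+\oLambda^4)^{-1}$ belongs to $B(\xH)$, so $\T_t$ acts on it. Applying \eqref{eq-semigroup-dual} and then trace cyclicity (all rearranged factors are bounded) yields
\begin{align*}
\xtr{n\oLambda^4R(n,-\oLambda^4)\,\S_t(\orho)}
&=\xtr{\orho\,\T_t\bigl(n\oLambda^4R(n,-\oLambda^4)\bigr)}\\
&=\xtr{\osigma\,\oLambda^{-2}\T_t\bigl(n\oLambda^4R(n,-\oLambda^4)\bigr)\oLambda^{-2}}.
\end{align*}
Substituting $u=\oLambda^{-2}v$ for arbitrary $v\in\xH$ (so $u\in\xH^4$) into \eqref{eq_apriori_withoutH} gives $\oLambda^{-2}\T_t(n\oLambda^4R(n,-\oLambda^4))\oLambda^{-2}\leq e^{\gamma t}\Id$ as bounded positive operators on $\xH$. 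Since $\osigma\geq 0$, tracing delivers
\[
\xtr{n\oLambda^4R(n,-\oLambda^4)\,\S_t(\orho)}\leq e^{\gamma t}\xtr{\osigma}=e^{\gamma t}\|\orho\|_{\xK_{\oLambda^4}},
\]
uniformly in $n$. Applying \cref{lemma_carac_Kn} to $\S_t(\orho)\in\xK^1_+$ then yields both $\S_t(\orho)\in\xK_{\oLambda^4}$ and $\|\S_t(\orho)\|_{\xK_{\oLambda^4}}\leq e^{\gamma t}\|\orho\|_{\xK_{\oLambda^4}}$.

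It remains to extend this bound to general $\orho\in\xK_{\oLambda^4}$. For self-adjoint $\orho$, the Jordan decomposition of $\osigma=\oLambda^2\orho\oLambda^2$ supplies $\orho=\orho_+-\orho_-$ with $\orho_\pm=\oLambda^{-2}\osigma_\pm\oLambda^{-2}\in\xK^1_+\cap\xK_{\oLambda^4}$ and additive norm $\|\orho\|_{\xK_{\oLambda^4}}=\|\orho_+\|_{\xK_{\oLambda^4}}+\|\orho_-\|_{\xK_{\oLambda^4}}$, so that linearity of $\S_t$ plus the triangle inequality preserve the constant $e^{\gamma t}$ exactly; a further real/imaginary split then covers arbitrary $\orho$. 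The semigroup property is inherited from $\S_t$ on $\xK^1$ thanks to the invariance of $\xK_{\oLambda^4}$ just proved. The step I expect to be the most delicate is the trace-cyclicity rearrangement producing $\oLambda^{-2}\T_t(\cdot)\oLambda^{-2}$: the regularizer $R(n,-\oLambda^4)$ must be inserted first so that all intermediate operators are bounded, and only then can the $\oLambda^{\pm 2}$ factors be shifted; once this is done properly, the \textit{a priori} estimate of \cref{prop_regularity} does essentially all of the remaining work in a single inequality.
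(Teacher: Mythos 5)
Your proof is correct and follows essentially the same route as the paper's: duality with the Heisenberg semigroup, the \emph{a priori} estimate of \cref{prop_regularity} evaluated at vectors of the form $\oLambda^{-2}v$, and \cref{lemma_carac_Kn} to conclude; your operator inequality $\oLambda^{-2}\T_t(n\oLambda^4R(n,-\oLambda^4))\oLambda^{-2}\leq e^{\gamma t}\Id$ is simply a cleaner packaging of the paper's spectral decomposition and truncation of $\osigma_0$. One minor caveat: your real/imaginary split only yields the constant $2e^{\gamma t}$ for non-self-adjoint elements (since $\|\mathrm{Re}\,\osigma\|_1+\|\mathrm{Im}\,\osigma\|_1$ can be as large as $2\|\osigma\|_1$), but this is immaterial here, as the paper itself restricts to positive states and that is the only case in which the proposition is used.
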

\begin{proof}
    As $\oS_t$ is positivity preserving, without loss of generality we can restrict our attention to $\orho_0\in \xK^1_+ \cap \xK_{\oLambda^4}$. Let $\osigma_0\in \xK_+^1$ such that $\orho_0=\oLambda^{-2}\osigma_0 \oLambda^{-2}$. Diagonalization of $\osigma_0$ gives a sequence of orthonormal vector $(\mu_k)_k \in \xH^\N$ and a sequence of positive reals $(\mu_k)\in L^1(\N)$ such that $\osigma_0= \sum_{k=0}^\infty \mu_k \ket{u_k}\bra{u_k}$, where the series converges in trace norm. Defining the truncated sequences $\osigma_{0,m}=\sum_{k=0}^m \mu_k \ket{u_k}\bra{u_k}$ and using \cref{prop_regularity}, we have
    \begin{align}
        \xtr{\T_t^{min}(n\oLambda^4R(n,-\oLambda^4)) \oLambda^{-2}\osigma_{0,m}\oLambda^{-2}}&=\sum_{k=0}^{m} \braket{u_k|n\oLambda^4R(n,-\oLambda^4)u_k}\\
        &\leq e^{\gamma t} \sum_{k=0}^m\|u_k\|^2\\
         &\leq e^{\gamma t} \|\osigma_0\|_1
    \end{align}
    Besides, as $\osigma_{0,m}\xrightarrow[m \to \infty ]{\xK^1}\osigma_0$, the increasing sequence $\oLambda^{-2}\osigma_{0,m} \oLambda^{-2}$ converges towards $\orho_0$ in $\xK_{\oLambda^4}$ thus in $\xK^1$, leading to
    \begin{align}
        \xtr{n\oLambda^4R(n,-\oLambda^4)\orho_t} = \xtr{\T_t^{min}(n\oLambda^4R(n,-\oLambda^4)) \orho_0}\leq e^{\gamma t} \|\orho_0\|_{\xK_{\oLambda^4}}.
    \end{align}
    Finally, \cref{lemma_carac_Kn} implies that $\forall t \geq 0$, $\orho_t \in \xK_{\oLambda^4}$ and $\|\orho_t\|_{\xK_{\oLambda^4}} \leq e^{\gamma t} \|\orho_0\|_{\xK_{\oLambda^4}}$.
\end{proof}
\subsection{Main Theorem}
Let us now state our first main theorem
\begin{theorem}
    \label{th_onedissipator_firstorder}
    Assume that \cref{assump_sansH} or \cref{assump_apriori} is satisfied.
    Then, there exists $C>0$ such that for every final time $T \geq 0$, every $(\Delta t,N)\in \R^+ \times \xN$ such that $\Delta t N =T$, we have for every $\orho_0 \in \xK^1_+ \cap \xK_{\oLambda^4}$,
    \begin{align}
        \|\orho_T-\xF_{\Delta t}^N(\orho_0)\|_1 \leq C T \|\orho\|_{L^\infty(0, T;\xK_{\oLambda^4})} \Delta t.
    \end{align}
\end{theorem}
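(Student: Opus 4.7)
The strategy is the textbook consistency + stability = convergence argument, simplified by the fact that stability is automatic. Since $\xF_{\Delta t}$ is CPTP (as ensured at the start of \cref{sec_infinite_dim_schemes}), it is a contraction on $(\xK^1,\|\cdot\|_1)$, so the proof reduces entirely to a one-step consistency estimate of the form
\begin{align*}
\|\S_{\Delta t}(\orho) - \xF_{\Delta t}(\orho)\|_1 \leq C\,\Delta t^2\,\|\orho\|_{\xK_{\oLambda^4}}
\qquad \text{for all } \orho \in \xK^1_+ \cap \xK_{\oLambda^4},
\end{align*}
which is the content of the lemma referenced as \cref{lem_oder1}. Granting this, a telescoping decomposition
\begin{align*}
\orho_T - \xF_{\Delta t}^N(\orho_0) = \sum_{k=0}^{N-1} \xF_{\Delta t}^{N-1-k}\!\left(\S_{\Delta t}(\orho_{k\Delta t}) - \xF_{\Delta t}(\orho_{k\Delta t})\right),
\end{align*}
combined with $\|\xF_{\Delta t}\|_{\mathfrak L(\xK^1)}\leq 1$, yields the global bound $CT\,\|\orho\|_{L^\infty(0,T;\xK_{\oLambda^4})}\,\Delta t$. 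Note that \cref{lem_apriori_estimate_sans_H} is precisely what guarantees the finiteness of $\|\orho\|_{L^\infty(0,T;\xK_{\oLambda^4})}$ from the single assumption $\orho_0 \in \xK_{\oLambda^4}$.

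To establish the one-step consistency, I would Taylor-expand both sides to order $\Delta t^2$. For the scheme, I write $\oS_{\Delta t}^{-1/2} = \Id - \tfrac{\Delta t^2}{8}\oQ^2 + O(\Delta t^4)$ and expand the Padé factor $(\Id - i\tfrac{\Delta t}{2}\oH)(\Id + i\tfrac{\Delta t}{2}\oH)^{-1} = \Id - i\Delta t \oH - \tfrac{\Delta t^2}{2}\oH^2 + O(\Delta t^3)$, so that
\begin{align*}
\widetilde\oM_{0,\Delta t} = \Id + \Delta t\,\oG + \Delta t^2 E_0,\qquad
\widetilde\oM_{j,\Delta t} = \sqrt{\Delta t}\,\oL_j + \Delta t^{5/2}E_j \quad (1\leq j \leq \Ndissip),
\end{align*}
where each $E_i$ is a finite polynomial in $\oH, \oQ, \oL_j$ of total order at most $2$ in unbounded operators. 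Summing the Kraus contributions gives $\xF_{\Delta t}(\orho) = \orho + \Delta t\,\xL(\orho) + \Delta t^2 R_{\Delta t}^{\xF}(\orho)$, where $R_{\Delta t}^{\xF}(\orho)$ is a finite sum of terms of the form $\oA_1 \orho \oA_2^\dag$ with $\oA_i$ a product of at most two among $\oH, \oQ, \oL_j$. Parallel to this, Duhamel's formula twice applied on the core described in \cref{prop_coreL} gives $\S_{\Delta t}(\orho) = \orho + \Delta t\,\xL(\orho) + \int_0^{\Delta t}(\Delta t - s)\S_s(\xL^2(\orho))\,ds$, whose remainder has the same algebraic structure $\oA_1 \orho \oA_2^\dag$ modulo an integral that is absorbed using the $\gamma$-quasi contraction property of $\S_t$ on $\xK_{\oLambda^4}$.

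The key technical step, and the main obstacle, is to bound each term $\oA_1\orho\oA_2^\dag$ in trace norm. Here the regularity assumptions \ref{asp1}--\ref{asp3} of \cref{assump_apriori} (resp. their counterparts in \cref{assump_sansH}) are designed so that interpolation (\cref{app_interpol}) places every two-fold product $\oA_i = \oB_i\oC_i$ with $\oB_i, \oC_i \in \{\oH,\oQ,\oL_j\}$ into $B(\xH^4,\xH)$: for instance $\oL_j\oG$ factors as $\xH^4 \to \xH^2 \to \xH$ via $\oG \in B(\xH^4,\xH^2)$ and $\oL_j \in B(\xH^2,\xH)$. This is exactly what \cref{prop_duality_K} converts into the trace-norm bound $\|\oA_1 \orho \oA_2^\dag\|_1 \lesssim \|\orho\|_{\xK_{\oLambda^4}}$, uniformly in $\Delta t$. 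The density of $\xH^\infty \otimes \xH^\infty$ in $\xK_{\oLambda^4}$ lets us first perform the expansions on rank-one operators $\ket{u}\bra{v}$ with $u,v\in \xH^\infty$ (where every operation is classical) and then extend the consistency bound by continuity. The careful verification that each second-order correction -- whether from $\oS_{\Delta t}^{-1/2}$, the Padé factor, or the Duhamel remainder -- can be factored into two operators in $B(\xH^4,\xH)$ is the bookkeeping hurdle; once done, the bound is uniform in $\Delta t$ and the theorem follows.
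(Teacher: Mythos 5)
Your global architecture is exactly the paper's: the telescoping decomposition combined with $\|\xF_{\Delta t}\|_{\mathfrak L(\xK^1)}\leq 1$ (stability for free from the CPTP property) reduces everything to the one-step consistency estimate of \cref{lem_oder1}, and \cref{lem_apriori_estimate_sans_H} supplies the finiteness of $\|\orho\|_{L^\infty(0,T;\xK_{\oLambda^4})}$. That part is correct and is verbatim the argument in the paper. The gap is in your proposed proof of the consistency step, in two places.

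First, your expansion $\widetilde\oM_{j,\Delta t}=\sqrt{\Delta t}\,\oL_j+\Delta t^{5/2}E_j$ with $E_j$ ``a finite polynomial of total order at most $2$'' is not tenable: expanding $\oS_{\Delta t}^{-1/2}=\Id-\tfrac{\Delta t^2}{8}\oQ^2+\cdots$ inside $\widetilde\oM_{j,\Delta t}$ produces the coefficient $\oL_j\oQ^2$, which under \cref{assump_apriori} only lies in $B(\xH^5,\xH)$, not $B(\xH^4,\xH)$; so this term cannot be bounded via \cref{prop_duality_K} using only $\orho\in\xK_{\oLambda^4}$, and the same issue affects the tail of the power series (terms in $\oQ^4$ and beyond). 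The extra power of $\Delta t$ does rescue the estimate, but only if one refuses to separate the powers of $\Delta t$ from the powers of $\oQ$: the paper's Step~3 bounds $(\oM_{j,\Delta t}-\widetilde\oM_{j,\Delta t})^\dag(\oM_{j,\Delta t}-\widetilde\oM_{j,\Delta t})\leq \Delta t(\Id-\oS_{\Delta t}^{-1/2})\oQ(\Id-\oS_{\Delta t}^{-1/2})\leq \tfrac{\Delta t^4}{16}\oQ^4$ via the scalar inequality $0\leq 2x\bigl(1-\tfrac{1}{\sqrt{1+x^2}}\bigr)^2\leq x^4$ applied to $x=\Delta t\oQ/2$, and then only $\oQ^2\in B(\xH^4,\xH)$ is needed because the bound controls the \emph{square} of the $B(\xH^4,\xH)$ norm. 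An analogous functional-calculus inequality, not a power series, handles $\widetilde\oM_{0,\Delta t}$. Second, your expansion of the exact flow, $\S_{\Delta t}(\orho)=\orho+\Delta t\,\xL(\orho)+\int_0^{\Delta t}(\Delta t-s)\S_s(\xL^2(\orho))\,ds$, requires $\xL(\orho)\in\D(\xL)$; since $\xL(\orho)$ is not positive, \cref{lemma_domain} does not apply to it directly, and justifying this second-order time regularity is a genuine extra burden. The paper avoids it entirely by using the mild Duhamel form $\orho_{t+\Delta t}=e^{\Delta t\oG}\orho_t e^{\Delta t\oG^\dag}+\sum_j\int_0^{\Delta t}e^{(\Delta t-s)\oG}\oL_j\orho_{t+s}\oL_j^\dag e^{(\Delta t-s)\oG^\dag}\,ds$ (\cref{lemma_duhamel}), which needs only one time derivative of $\orho_s$ and is then compared term by term with the Kraus decomposition of $\xF_{\Delta t}$. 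Your proof would go through after replacing the power-series bookkeeping by these operator inequalities and the Taylor expansion of $\S_{\Delta t}$ by the mild formulation.
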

Namely, the time-discretized scheme given by $\xF_{\Delta t}$ is a first order approximation scheme even in the infinite dimensional setting.

The proof relies on the following Lemma whose proof is the object of \cref{subsec_proof_lemma1}.
\begin{lemma}[Consistency]
    \label{lem_oder1}
    Under \cref{assump_sansH} or \cref{assump_apriori}. There exists a constant $C>0$, such that for every final time $T>0$, initial state $\orho_0\in \xK^1_+\cap \xK_{\oLambda^4}$, $\Delta t$ and $0 \leq t \leq T-\Delta t$, we have
    \begin{align}
        \|\orho_{t+\Delta t}- \xF_{\Delta t}(\orho_{t})\| \leq C T \|\orho\|_{L^\infty(0, T;\xK_{\oLambda^4})} \Delta t^2.
    \end{align}
\end{lemma}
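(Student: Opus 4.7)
The plan is to bound the local truncation error in trace norm by developing two second-order expansions of the form $\orho_t + \Delta t\,\xL(\orho_t) + R$---one for the exact evolution and one for the scheme---and verifying that both remainders are of size $\Delta t^2 \|\orho_t\|_{\xK_{\oLambda^4}}$. By \cref{lem_apriori_estimate_sans_H}, the exact solution satisfies $\orho_s\in\xK_{\oLambda^4}$ uniformly for $s\in[0,T]$, so all subsequent estimates may be phrased in terms of $\|\orho\|_{L^\infty(0,T;\xK_{\oLambda^4})}$.

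For the exact map, I iterate Duhamel's formula twice to obtain
\begin{equation*}
    \orho_{t+\Delta t} = \orho_t + \Delta t\,\xL(\orho_t) + \int_0^{\Delta t}\!\!\int_0^s \xL^2(\orho_{t+u})\,du\,ds,
\end{equation*}
and bound the remainder by $\tfrac{\Delta t^2}{2}\sup_u\|\xL^2(\orho_{t+u})\|_1$. Expanding $\xL^2(\orho)$ produces a finite sum of terms $\oA\,\orho\,\oB^\dag$ where $\oA,\oB$ are products of at most two operators among $\{\oG,\oG^\dag,\oL_j,\oL_j^\dag\}$. The interpolation consequences of \cref{item_apriorisansH} (respectively \cref{asp2}) give $\oG\in B(\xH^{n+2},\xH^n)$ and $\oL_j\in B(\xH^{n+1},\xH^n)$ for $0\leq n\leq 4$, so each product lies in $B(\xH^4,\xH)$; \cref{prop_duality_K} then bounds every term in trace norm by $C\|\orho\|_{\xK_{\oLambda^4}}$, yielding the desired $O(\Delta t^2)$ remainder.

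For the scheme, I would analogously show $\xF_{\Delta t}(\orho_t)=\orho_t+\Delta t\,\xL(\orho_t)+O(\Delta t^2 \|\orho_t\|_{\xK_{\oLambda^4}})$ by Taylor-expanding the Kraus operators. Using the Cayley identity $U_{\Delta t}-\Id+i\Delta t\,\oH=-\tfrac{\Delta t^2}{4}\oH^2(U_{\Delta t}+\Id)$ for $U_{\Delta t}=(\Id-i\tfrac{\Delta t}{2}\oH)(\Id+i\tfrac{\Delta t}{2}\oH)^{-1}$, combined with the elementary product rule, gives $\oM_{0,\Delta t}=\Id+\Delta t\,\oG+\Delta t^2\,R_0(\Delta t)$ with $R_0(\Delta t)$ uniformly bounded from a suitable Sobolev-scale space to $\xH$. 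Functional calculus on $\oS_{\Delta t}=\Id+\tfrac{\Delta t^2}{4}\oQ^2$, together with the pointwise inequalities $1-(1+y)^{-1/2}\leq y/2$ and $\leq\sqrt{y}$, yields
\begin{equation*}
    \|\Id-\oS_{\Delta t}^{-1/2}\|_{B(\xH^4,\xH)}\leq C\Delta t^2 \quad\text{and}\quad \|\Id-\oS_{\Delta t}^{-1/2}\|_{B(\xH^2,\xH)}\leq C\Delta t,
\end{equation*}
using $\oQ^2\in B(\xH^4,\xH)$ and $\oQ\in B(\xH^2,\xH)$. Writing $\orho=\oLambda^{-2}\osigma\oLambda^{-2}$ and substituting these expansions into $\xF_{\Delta t}(\orho)=\sum_{j=0}^{\Ndissip}\oM_{j,\Delta t}\oS_{\Delta t}^{-1/2}\orho\oS_{\Delta t}^{-1/2}\oM_{j,\Delta t}^\dag$, each correction reduces, via \cref{prop_duality_K}, to a product of operator norms multiplying $\|\osigma\|_1$; collecting powers of $\Delta t$ gives the claimed consistency bound.

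The main obstacle is the bookkeeping of the scheme expansion: every correction involving $\Id-\oS_{\Delta t}^{-1/2}$ flanked by an unbounded operator ($\oL_j$, $\oQ$, or a derivative of $U_{\Delta t}$) and by $\oLambda^{-2}$ requires an operator-norm estimate such as $\|\oL_j(\Id-\oS_{\Delta t}^{-1/2})\oLambda^{-2}\|_\infty=O(\Delta t^\alpha)$ with $\alpha>1$. Under \cref{assump_sansH}, where $\oLambda=\oQ+\Id$ commutes with $\oS_{\Delta t}^{-1/2}$, this reduces to interpolating the two displayed bounds to obtain $\|\Id-\oS_{\Delta t}^{-1/2}\|_{B(\xH^3,\xH)}\leq C\Delta t^{3/2}$ and then absorbing one power of $\oLambda^{-1/2}$ into $\oL_j$. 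Under the more general \cref{assump_apriori}, the absence of commutation between $\oLambda$ and $\oQ$ (and between $\oLambda$ and $U_{\Delta t}$) forces a more delicate analysis using \cref{asp2} to track mapping properties along the scale $\xH\supset\xH^2\supset\xH^4\supset\xH^6$, possibly via commutator arguments; this step is the technical core of the proof, and once it is carried out, subtracting the two expansions delivers the lemma.
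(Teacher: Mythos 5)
Your overall plan (compare both $\orho_{t+\Delta t}$ and $\xF_{\Delta t}(\orho_t)$ to the common expansion $\orho_t+\Delta t\,\xL(\orho_t)$) differs from the paper, which instead uses the Duhamel formula associated with the splitting $\xL=\xL_1+\xL_2$, i.e.\ $\orho_{t+\Delta t}=e^{\Delta t\oG}\orho_te^{\Delta t\oG^\dag}+\sum_j\int_0^{\Delta t}e^{(\Delta t-s)\oG}\oL_j\orho_{t+s}\oL_j^\dag e^{(\Delta t-s)\oG^\dag}ds$, and matches each term against the corresponding Kraus term. One consequence of this choice is that the paper only ever needs the \emph{first-order} domain result of \cref{lemma_domain}, whereas your iterated Duhamel formula requires $\orho_{t+u}\in\D(\xL^2)$ (as generator of the semigroup on $\xK^1$), i.e.\ that $\xL(\orho_{t+u})$, which is neither positive nor covered by \cref{lemma_domain}, again lies in $\D(\xL)$. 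This is plausibly repairable (decompose $\xL(\orho)=\oLambda^{-1}\tau\oLambda^{-1}$ with $\tau$ self-adjoint trace-class and split $\tau=\tau_+-\tau_-$), but you do not address it, and it is an extra cost of your route.

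The more serious problem is the step you yourself flag as ``the technical core'': controlling $\sqrt{\Delta t}\,\oL_j(\Id-\oS_{\Delta t}^{-1/2})$ under \cref{assump_apriori}, where $\oLambda$ need not commute with $\oQ$. Your proposed fix (interpolating $\|\Id-\oS_{\Delta t}^{-1/2}\|_{B(\xH^k,\xH)}$ and commutator arguments) is not carried out, and is not the right tool: the composition $\oL_j(\Id-\oS_{\Delta t}^{-1/2})$ cannot be bounded by chaining mapping properties along the scale $\xH^k$ without knowing how $\oS_{\Delta t}^{-1/2}$ interacts with $\oLambda$. The paper circumvents this entirely by squaring: since $\oS_{\Delta t}^{-1/2}$ is a function of $\oQ$ and $\oL_j^\dag\oL_j\leq\oQ$, one has
\begin{equation*}
\Delta t\,(\Id-\oS_{\Delta t}^{-1/2})\oL_j^\dag\oL_j(\Id-\oS_{\Delta t}^{-1/2})\leq \Delta t\,(\Id-\oS_{\Delta t}^{-1/2})\,\oQ\,(\Id-\oS_{\Delta t}^{-1/2})\leq \tfrac{\Delta t^4}{16}\oQ^4,
\end{equation*}
the last inequality being scalar functional calculus in $\oQ$ alone (from $0\leq 2x(1-(1+x^2)^{-1/2})^2\leq x^4$); the bound $\oQ^2\in B(\xH^4,\xH)$ then gives $\|\oM_{j,\Delta t}-\widetilde\oM_{j,\Delta t}\|_{B(\xH^4,\xH)}\leq C\Delta t^2$ with no commutation with $\oLambda$ required. (The same squaring device is what the paper uses for the Cayley-transform error $\oA_{\Delta t}$, via $\oA_{\Delta t}^\dag\oA_{\Delta t}\leq\min(\Delta t^2\oH^2,\Delta t^4\oH^4)$.) Until you replace your commutator strategy by an argument of this type, your proof is incomplete precisely in the general case the lemma is meant to cover.
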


To prove the convergence of the scheme stated in \cref{th_onedissipator_firstorder} we need both consistency of the scheme (\cref{lem_oder1}) and a stability property. For the latter, we use that $\xF_{\Delta t}$, being a CPTP map, induces contraction of the trace norm:
\begin{align}
    \|\orho_T-\xF_{\Delta t}^N(\orho_0)\|_1&=\|\sum_{k=1}^{N} \xF_{\Delta t}^{N-k}(\orho_{k\Delta t}-\xF_{\Delta t}(\orho_{(k-1)\Delta t} ))\|_1\\
    & \leq \sum_{k=1}^{N} \|(\orho_{k\Delta t}-\xF_{\Delta t}(\orho_{(k-1)\Delta t} ))\|_1\\
    & \leq C T \|\orho\|_{L^\infty(0, T;\xK_{\oLambda^4})} \Delta t,
\end{align}
which concludes the proof of \cref{th_onedissipator_firstorder}.

Under \cref{assump_sansH} or \cref{assump_apriori}, the bound we proved on the accuracy of the scheme increases exponentially with the final time, namely in $Te^{\gamma T}$ as the upper bound we have on $\|\orho\|_{L^\infty(0, T;\xK_{\oLambda^4})}$ increases exponentially. Under a stronger version than \cref{apriori_sans_H}, we can obtain only a linear dependency in $T$.
\begin{corollary}
    Assume \cref{assump_sansH} or \cref{assump_apriori} holds, and that there exists $\gamma_0,\gamma_1 >0$ such that $\forall u \in \xH^6$,
        \begin{align}
            \label{eq_control_for_uniform_K4}
                2\operatorname*{Re}\braket{\oLambda^2 \oG u| \oLambda^2 u}+ \sum_{j=1}^{\Ndissip} \|\oLambda^2\oL_ju\|^2\leq \gamma_0\|u\|^2 -\gamma_1 \|\oLambda^2u\|^2
        \end{align}
        then \cref{th_onedissipator_firstorder} holds and one has

        \begin{align}
            \|\orho_T-\xF_{\Delta t}^N(\orho_0)\|_1 \leq C T \Delta t \max \left(\|\orho_0\|_{\xK_{\oLambda^4}}, \frac{\gamma_0}{\gamma_1} \right).
        \end{align}
\end{corollary}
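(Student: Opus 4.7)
The strategy is to first observe that the strengthened hypothesis \eqref{eq_control_for_uniform_K4} implies \eqref{apriori_sans_H} (with $\gamma=\gamma_0$, since $\oLambda \geq \Id$ gives $\|u\|^2 \leq \|\oLambda^2 u\|^2$), so \cref{th_onedissipator_firstorder} applies and yields
\[ \|\orho_T - \xF_{\Delta t}^N(\orho_0)\|_1 \leq C T \|\orho\|_{L^\infty(0,T;\xK_{\oLambda^4})} \Delta t. \]
The remaining task is therefore to replace the \emph{a priori} bound of \cref{lem_apriori_estimate_sans_H}, which grows like $e^{\gamma T}$, by a time-independent bound under the sharper assumption.

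The key step is to adapt \cref{prop_regularity}. The sharper formal identity $\xL^*(\oLambda^4) \leq \gamma_0 \Id - \gamma_1 \oLambda^4$ suggests that in place of the exponential estimate $e^{\gamma t}\|\oLambda^2 u\|^2$, one obtains, by a Grönwall argument applied to $t \mapsto \braket{u | \T_t^{min}(n \oLambda^4 R(n,-\oLambda^4)) u}$,
\[ \sup_n \braket{u | \T_t^{min}(n \oLambda^4 R(n,-\oLambda^4)) u} \leq e^{-\gamma_1 t} \|\oLambda^2 u\|^2 + \frac{\gamma_0}{\gamma_1}\left(1 - e^{-\gamma_1 t}\right) \|u\|^2,\qquad \forall u \in \xH^4. \]
Concretely, one repeats the regularization scheme of \cref{sec_proof_prop_regularity_sans_H} (truncating $\oLambda^4$ by $n\oLambda^4 R(n,-\oLambda^4)$ so that everything becomes bounded, then passing to the limit $n\to\infty$), but replacing the linear Grönwall inequality $f' \leq \gamma f$ by $f' \leq \gamma_0 \|u\|^2 - \gamma_1 f$, whose solution is precisely the affine combination above.

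Next, the proof of \cref{lem_apriori_estimate_sans_H} is repeated verbatim: for $\orho_0 = \oLambda^{-2} \osigma_0 \oLambda^{-2}$ with $\osigma_0 = \sum_k \mu_k \ket{u_k}\bra{u_k} \in \xK^1_+$, one traces the improved estimate against $\oLambda^{-2}\osigma_{0,m}\oLambda^{-2}$, uses the identity $\sum_k \mu_k \|u_k\|^2 = \|\osigma_0\|_1 = \|\orho_0\|_{\xK_{\oLambda^4}}$ for the first term, and $\sum_k \mu_k \|\oLambda^{-2} u_k\|^2 = \xtr{\oLambda^{-4}\osigma_0} = \xtr{\orho_0} = \|\orho_0\|_1 \leq 1$ for the second (using $\oLambda^{-4} \leq \Id$ and $\orho_0 \in \xK_d$), then invokes \cref{lemma_carac_Kn} to conclude
\[ \|\orho_t\|_{\xK_{\oLambda^4}} \leq e^{-\gamma_1 t} \|\orho_0\|_{\xK_{\oLambda^4}} + \frac{\gamma_0}{\gamma_1}\left(1 - e^{-\gamma_1 t}\right) \leq \max\!\left(\|\orho_0\|_{\xK_{\oLambda^4}},\, \frac{\gamma_0}{\gamma_1}\right). \]
Plugging this uniform bound into the conclusion of \cref{th_onedissipator_firstorder} yields the stated inequality.

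The main obstacle is the careful adaptation of \cref{prop_regularity}: all the manipulations are formally straightforward, but the full rigorous argument requires justifying the differentiation of $\braket{u|\T_t^{min}(n\oLambda^4R(n,-\oLambda^4))u}$ and the passage to the limit $n\to\infty$ inside the affine Grönwall bound. Once the differential inequality is obtained on the truncated quantity, however, the stability of the bound under $n\to\infty$ is standard, and the rest of the proof is a direct transcription of \cref{lem_apriori_estimate_sans_H}.
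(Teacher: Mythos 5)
Your proposal is correct and follows essentially the same route as the paper: both reduce the corollary to the uniform-in-time bound $\|\orho_t\|_{\xK_{\oLambda^4}}\leq\max\left(\|\orho_0\|_{\xK_{\oLambda^4}},\gamma_0/\gamma_1\right)$ obtained from the formal Lyapunov inequality $\xL^*(\oLambda^4)\leq\gamma_0\Id-\gamma_1\oLambda^4$, made rigorous through the truncation/resolvent machinery of \cref{prop_regularity} and \cref{lem_apriori_estimate_sans_H}. The paper's own proof is only a three-line sketch that defers the rigorous Gr\"onwall argument to an external reference, so your explicit affine bound $e^{-\gamma_1 t}\|\oLambda^2u\|^2+\tfrac{\gamma_0}{\gamma_1}\left(1-e^{-\gamma_1 t}\right)\|u\|^2$ is, if anything, a more detailed rendering of the same argument.
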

\begin{proof}[Sketch of proof]
    We only have to show that \cref{eq_control_for_uniform_K4} implies that for every $\orho_0 \in \xK_{\oLambda^4}$ and $t \geq 0$, $\|\orho_t\|_{\xK_{\oLambda^4}}\leq \max \left(\|\orho_0\|_{\xK_{\oLambda^4}}, \frac{\gamma_0}{\gamma_1} \right)$. Formally \cref{eq_control_for_uniform_K4} means that $\xL^*(\oLambda^4)\leq \gamma_0 - \gamma_1 \oLambda^4$, thus $t\mapsto \xtr{\orho_t \oLambda^4}$ is decreasing if $\xtr{\orho_t \oLambda^4}> \frac{\gamma_0}{\gamma_1}$. We refer to \cite[Section 4.4.3]{robinConvergenceBipartiteOpen2024} for the proof of this statement.
\end{proof}

We also have a non-quantitative convergence for a non-regular initial state:
\begin{corollary}
    Under \cref{assump_sansH} or \cref{assump_apriori}, for any final time $T>0$, for every $\orho_0\in \xK^1_+$,
    \begin{align}
        \xF_{T/N}^N(\orho_0) \xrightarrow[N\to \infty]{\xK^1}\orho_T
    \end{align}
\end{corollary}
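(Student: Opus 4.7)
The strategy is a standard three-epsilon density argument: combine the quantitative convergence from \cref{th_onedissipator_firstorder} on the dense regular subspace $\xK^1_+\cap \xK_{\oLambda^4}$ with the trace-norm contractivity of both the exact semigroup $(\S_t)_{t\geq 0}$ and the discrete map $\xF_{\Delta t}$. The proposal is to prove density in $\xK^1_+$ by spectral truncation of $\oLambda$, then interchange the limits $k\to\infty$ (regularization) and $N\to\infty$ (time refinement).

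First, I would establish density. Let $E_k = \mathds 1_{[0,k]}(\oLambda)$ be the spectral projector of $\oLambda$ (well-defined since $\oLambda$ is self-adjoint with $\oLambda\geq \Id$), and set $\orho_0^{(k)} = E_k \orho_0 E_k$. Then $\orho_0^{(k)} \in \xK^1_+$, and by spectral calculus $\oLambda^2 E_k$ is bounded (its operator norm is at most $k^2$), so $\osigma_k \coloneqq (\oLambda^2 E_k)\,\orho_0\,(E_k \oLambda^2) \in \xK^1$, giving $\orho_0^{(k)} = \oLambda^{-2}\osigma_k \oLambda^{-2} \in \xK_{\oLambda^4}$. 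The convergence $\orho_0^{(k)} \to \orho_0$ in trace norm follows from the standard compression argument
\begin{align*}
\orho_0 - E_k\orho_0 E_k = (\Id-E_k)\orho_0 + E_k\orho_0(\Id-E_k),
\end{align*}
together with the facts that $\orho_0$ is trace class and $E_k \to \Id$ strongly.

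Next, for fixed $k$, I would apply the triangle inequality
\begin{align*}
\|\xF_{T/N}^N(\orho_0) - \S_T(\orho_0)\|_1
\leq \|\xF_{T/N}^N(\orho_0-\orho_0^{(k)})\|_1
+ \|\xF_{T/N}^N(\orho_0^{(k)}) - \S_T(\orho_0^{(k)})\|_1
+ \|\S_T(\orho_0^{(k)}-\orho_0)\|_1.
\end{align*}
Since $\xF_{T/N}$ and $\S_T$ are CPTP, each is a contraction on the self-adjoint part of $\xK^1$ for the trace norm (decompose the self-adjoint operator $\orho_0-\orho_0^{(k)}$ into positive and negative parts and use trace preservation). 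Hence the first and third terms are bounded by $\|\orho_0-\orho_0^{(k)}\|_1$, which is as small as desired by density. The middle term is handled by \cref{th_onedissipator_firstorder} applied to the regular datum $\orho_0^{(k)}\in \xK^1_+\cap \xK_{\oLambda^4}$, giving
\begin{align*}
\|\xF_{T/N}^N(\orho_0^{(k)}) - \S_T(\orho_0^{(k)})\|_1 \leq C\,T\,\|\S_\cdot(\orho_0^{(k)})\|_{L^\infty(0,T;\xK_{\oLambda^4})}\,\frac{T}{N},
\end{align*}
and \cref{lem_apriori_estimate_sans_H} bounds the $L^\infty$ norm by $e^{\gamma T}\|\orho_0^{(k)}\|_{\xK_{\oLambda^4}}$, which is finite. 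Thus for fixed $k$ this term tends to zero as $N\to\infty$.

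Given $\varepsilon>0$, pick $k$ so that $\|\orho_0-\orho_0^{(k)}\|_1 < \varepsilon/3$, then pick $N_0(k)$ so that the middle term is below $\varepsilon/3$ for all $N\geq N_0(k)$; this yields the desired convergence. The only mildly delicate points are the density construction and the justification of trace-norm contractivity on non-positive (self-adjoint) differences, both of which are routine; no obstacle comparable to the consistency analysis of \cref{lem_oder1} appears here, so this corollary is essentially a soft consequence of the quantitative theorem plus the abstract properties of quantum channels.
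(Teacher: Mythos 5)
Your proof is correct and follows essentially the same route as the paper: a three-epsilon argument combining density of $\xK^1_+\cap\xK_{\oLambda^4}$ in $\xK^1_+$, the quantitative estimate of \cref{th_onedissipator_firstorder} on the regularized datum, and trace-norm contractivity of the CPTP maps $\S_T$ and $\xF_{T/N}^N$. The only difference is that you spell out details the paper leaves implicit (the explicit spectral-truncation construction $E_k\orho_0 E_k$ for density, and the justification of contractivity on self-adjoint differences), which is a welcome but not substantive addition.
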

\begin{proof}
    For every $\epsilon>0$, by density of $\xK_{\oLambda^4}$ in $\xK^1$, there exists $\tilde \orho_0\in \xK^1_+\cap \xK_{\oLambda^4}$ such that $\|\tilde \orho_0-\orho_0\|_1\leq \epsilon/3$. By \cref{th_onedissipator_firstorder}, there exists $N$ such that $\|\S_t(\tilde \orho_0)-\xF^N_{T/N}(\tilde \orho_0)\|_1\leq \epsilon/3$. Moreover, as both $\xF$ and $\S_t$ are CPTP, they contract the trace norm so
    $\|\S_t(\tilde \orho_0)-\S_t(\orho_0)\|_1 \leq \epsilon/3$ and 
    $\|\xF^N_{T/N}(\tilde \orho_0)-\xF^N_{T/N}(\orho_0)\|_1 \leq \epsilon/3$.
\end{proof}
\subsection{Proof of Lemma \ref{lem_oder1}}
\label{subsec_proof_lemma1}
In this section, we proceed with \cref{assump_sansH} or \cref{assump_apriori} in effect.
\subsubsection{Preliminaries and a Duhamel formula}

\begin{lemma}
    \label{lemma_domain}
    $\orho\in \xK^1_+\cap \xK_{\oLambda^2}$, then $\orho$ is in the domain of the Lindbladian and 
    \begin{align}
        \xL(\orho)=\oG \orho + \orho \oG^\dag +\sum_{j=1}^{\Ndissip} \oL_j \orho \oL_j^\dag 
    \end{align}
\end{lemma}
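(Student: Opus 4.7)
The strategy is to use the core property from \cref{prop_coreL}. Under either \cref{assump_sansH} or \cref{assump_apriori}, \cref{prop_regularity} states that the minimal semigroup is conservative, so \cref{prop_coreL} guarantees that the linear manifold $\mathcal{U}$ spanned by rank-one operators $\ket{u}\bra{v}$ with $u,v\in\D(\oG)$ is a core for the generator $\xL$. It therefore suffices to construct a sequence $\orho_m \in \mathcal{U}$ with $\orho_m \to \orho$ in $\xK^1$ and $\xL(\orho_m)$ convergent in $\xK^1$ to $\oG\orho + \orho\oG^\dag + \sum_{j=1}^{\Ndissip} \oL_j \orho \oL_j^\dag$, and then invoke the core property to identify $\xL(\orho)$ with that limit.

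To build such approximants I would spectrally truncate on the reference side. Writing $\orho = \oLambda^{-1}\osigma\oLambda^{-1}$ with $\osigma \in \xK^1_+$, I decompose $\osigma = \sum_{k\geq 0} \mu_k \ket{u_k}\bra{u_k}$ and set $v_k = \oLambda^{-1} u_k \in \xH^2$. Under both assumption sets one has $\oG \in B(\xH^2, \xH)$ (since $\oH$ and $\oQ$ both lie in $B(\xH^2, \xH)$) and $\oL_j \in B(\xH^1, \xH) \subset B(\xH^2, \xH)$, so the density of $\xH^\infty$ in $\xH^2$ yields $\xH^2 \subset \D(\oG) \cap \bigcap_{j=1}^{\Ndissip} \D(\oL_j)$, and in particular $v_k$ lies in that intersection. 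The finite-rank truncations $\orho_m := \sum_{k=0}^{m-1} \mu_k \ket{v_k}\bra{v_k}$ therefore belong to $\mathcal{U}$, and \cref{prop_coreL} gives
\[
  \xL(\orho_m) = \oG \orho_m + \orho_m \oG^\dag + \sum_{j=1}^{\Ndissip} \oL_j \orho_m \oL_j^\dag.
\]

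The final step passes to the limit in $\xK^1$. Since $\osigma_m := \sum_{k=0}^{m-1} \mu_k \ket{u_k}\bra{u_k} \to \osigma$ in $\xK^1$ and $\orho_m = \oLambda^{-1}\osigma_m\oLambda^{-1}$, one has $\orho_m \to \orho$ in $\xK_{\oLambda^2}$ and a fortiori in $\xK^1$. Applying \cref{prop_duality_K} with $k=2$ to the pairs $(\oG,\Id)$, $(\Id,\oG)$ and $(\oL_j,\oL_j)$ — with right-hand adjoints interpreted in the duality sense of \cref{rmk:ambiguous}, and using that $\Id : \xH^2 \hookrightarrow \xH$ has norm at most $1$ since $\oLambda \geq \Id$ — controls $\oG(\orho-\orho_m)$, $(\orho-\orho_m)\oG^\dag$ and each $\oL_j(\orho-\orho_m)\oL_j^\dag$ in $\xK^1$ by a constant multiple of $\|\orho - \orho_m\|_{\xK_{\oLambda^2}}$, so $\xL(\orho_m)$ converges in $\xK^1$ to the expected expression. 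The core property then delivers $\orho \in \D(\xL)$ together with the claimed identity. The only mildly delicate point is the identification $\xH^2 \subset \D(\oG)$ (and similarly for each $\oL_j$), which must be obtained by matching the continuous extensions of $\oG$ and $\oL_j$ to $\xH^2$ arising from the boundedness assumptions with their closures as unbounded operators on $\xH$; once that is secured, the rest is a straightforward linear approximation argument.
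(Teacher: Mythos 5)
Your proposal is correct and follows essentially the same route as the paper's own proof: decompose $\orho=\oLambda^{-1}\osigma\oLambda^{-1}$, spectrally truncate $\osigma$ so that the truncations $\orho_m$ lie in the manifold $\mathcal U$ of \cref{prop_coreL}, apply the explicit formula for $\xL$ there, and pass to the limit in $\xK^1$ using $\oG,\oL_j\in B(\xH^2,\xH)$ together with the closedness of the generator. The only cosmetic difference is that you invoke conservativity to get the full core property, whereas the paper only needs the weaker fact that $\mathcal U\subset\D(\xL)$ (plus closedness of $\xL$); both suffice.
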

Before proving this Lemma, let us recall that in line with \cref{rmk:ambiguous}, $\oL_j\orho_{s} \oL_j^\dag\in \xK^1$ is a slight abuse of notation. Indeed, one should consider $\oL_j^\dag$  as an element of $B(\xH,\xH^{1,*})$ (the adjoint of the continuous operator $\oL_j: \xH^1\to \xH$) and not as $\oL_j^\dag: \D(\oL^\dag_j) \to \xH$ (the adjoint of the unbounded operator $\oL_j: \D(\oL_j) \to \xH$). Hence, with our convention $\oL_j\orho_{s} \oL_j^\dag$ is indeed a bounded operator on $\xH$ and an element of $\xK^1$. The same applies to $\orho \oG^\dag$.
\begin{proof}
    We use that the linear span $\mathcal{U}$ generated by
    $\ket{u}\bra{v}$ for $u,v\in \xH^2 \subset \D(\oG)$ is in the domain of $\xL$ by \cref{prop_coreL} (and is even a core in our case). We decompose
    $\orho=\oLambda^{-1}\osigma\oLambda^{-1}$ and $\osigma=\sum_{k=0}^\infty \mu_k \ket{u_k}\bra{u_k}$ with $u_k$ an orthonormal basis and $\mu_k\in L^1(\N)$. The sequence
    \begin{align}
        \osigma_{m}&=\sum_{k=0}^m \mu_k \ket{ u_k}\bra{u_k}
    \end{align}
    converges towards $\osigma$. Besides $\orho_{m}=\oLambda^{-1}\osigma_m \oLambda^{-1}$ belongs to $\mathcal{U}$ and converges in $\xK^1$ towards $\orho$. Then
    \begin{align*}
        \xL(\orho_{m})&=\sum_{k=0}^m \mu_k \left(\ket{\oG\oLambda^{-1} u_k}\bra{\oLambda^{-1}u_k} +\ket{\oLambda^{-1} u_k}\bra{\oG\oLambda^{-1}u_k}+\sum_{j=1}^{\Ndissip} \ket{\oL_j\oLambda^{-1} u_k}\bra{\oL_j\oLambda^{-1}u_k}\right)\\
    &=\oG\orho_{m}+\orho_{m}\oG^\dag +\sum_{j=1}^{\Ndissip} \oL_j \orho_{m} \oL_j^\dag
    \end{align*}
    We have $\oG,\oL_j \in B(\xH^2,\xH)$. Thus, $\xL(\orho_{m})$ converges in $\xK^1$ which shows that $\orho \in \D(\xL)$ and 
    \begin{align*}
        \xL(\orho_{m})\xrightarrow[m \to \infty ]{\xK^1} \oG\orho+\orho \oG^\dag +\sum_{j=1}^{\Ndissip} \oL_j \orho \oL_j^\dag=\xL(\orho)
    \end{align*}
\end{proof}
Then we can prove the following Duhamel formula:
\begin{lemma}
    \label{lemma_duhamel}
    Let $(\orho_s)_{0\leq s \leq t}\in L^\infty(0,t;\xK^1_+\cap \xK_{\oLambda^2})$ be the solution of the Lindblad equation, then for all $0 \leq s \leq t$, we have
\begin{align}
	\label{eq_duhamel}
    \orho_t&=e^{t\oG}\orho_0 e^{t\oG} + \sum_{j=1}^{\Ndissip}\int_{0}^{ t}  e^{(t-s)\oG}\oL_j\orho_{s} \oL_j^\dag e^{(t-s)\oG^\dag}  ds.
\end{align}
\end{lemma}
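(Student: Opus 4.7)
The plan is to apply the variation-of-constants argument to the splitting $\xL=\xL_1+\xL_2$ introduced in \cref{sec:cptp_schemes}, with $\xL_1(\orho)=\oG\orho+\orho\oG^\dag$ and $\xL_2(\orho)=\sum_{j=1}^{\Ndissip}\oL_j\orho \oL_j^\dag$. Since $\oG$ generates a $C_0$-semigroup of contractions on $\xH$ (by \cref{assump_sansH}/\cref{assump_apriori}), the conjugation map $\orho\mapsto e^{t\oG}\orho e^{t\oG^\dag}$ is a $C_0$-semigroup on $\xK^1$, whose generator $\xL_1$ contains $\xK_{\oLambda^2}$ in its domain: for $\orho\in\xK_{\oLambda^2}$ one has $\oG\orho,\orho\oG^\dag\in\xK^1$ by \cref{prop_duality_K} with $k=2$, using $\oG\in B(\xH^2,\xH)$.

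Next, I define $\phi:[0,t]\to\xK^1$ by $\phi(s)=e^{(t-s)\xL_1}(\orho_s)$, so that $\phi(0)=e^{t\oG}\orho_0 e^{t\oG^\dag}$ and $\phi(t)=\orho_t$. The hypothesis $\orho\in L^\infty(0,t;\xK_{\oLambda^2})$ combined with \cref{lemma_domain} yields $\orho_s\in\D(\xL)$ and $\dot\orho_s=\xL(\orho_s)=\xL_1(\orho_s)+\xL_2(\orho_s)$ in $\xK^1$. The standard product rule for $C_0$-semigroups then gives
\[
\phi'(s)=e^{(t-s)\xL_1}\bigl(\xL(\orho_s)-\xL_1(\orho_s)\bigr)=e^{(t-s)\xL_1}\bigl(\xL_2(\orho_s)\bigr),
\]
and $\xL_2(\orho_s)\in\xK^1$ with uniform bound $\|\oL_j\orho_s\oL_j^\dag\|_1\leq C\|\orho_s\|_{\xK_{\oLambda^2}}$ by \cref{prop_duality_K} applied with $k=2$ to each $\oL_j\in B(\xH^1,\xH)\subset B(\xH^2,\xH)$. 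Integrating from $0$ to $t$ and expanding $\xL_2$ gives exactly \cref{eq_duhamel}.

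The principal obstacle is making the differentiation in the second step fully rigorous under the assumed $L^\infty$ regularity of $(\orho_s)$, since a priori $s\mapsto\orho_s$ is only differentiable almost everywhere in $\xK^1$ and the product rule for $\phi$ must combine strong differentiability of $\orho_s$ with the semigroup $e^{(t-s)\xL_1}$ acting on an $s$-dependent element of its generator's domain. If this direct route proves too delicate, the cleanest workaround is to first establish the identity for an initial condition that is a finite combination of rank-one operators $\ket{u}\bra{v}$ with $u,v\in\xH^\infty$ (where \cref{prop_coreL} guarantees classical differentiability in $\xK^1$), and then extend to a general $\orho_0\in\xK^1_+\cap\xK_{\oLambda^2}$ by density and a passage to the limit: the homogeneous term is continuous in $\orho_0$ for the $\xK^1$-topology by contractivity of $\orho\mapsto e^{t\oG}\orho e^{t\oG^\dag}$ and of $\S_t$, while the inhomogeneous integral term is handled by dominated convergence using the uniform bound $\|e^{(t-s)\oG}\oL_j\orho_s\oL_j^\dag e^{(t-s)\oG^\dag}\|_1\leq\|\oL_j\|_{B(\xH^2,\xH)}^2\|\orho_s\|_{\xK_{\oLambda^2}}$ coming from \cref{prop_duality_K}.
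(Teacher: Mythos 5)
Your proposal is correct and follows essentially the same route as the paper: the paper's proof also differentiates $s\mapsto e^{(t-s)\oG}\orho_s e^{(t-s)\oG^\dag}$ (your $\phi(s)=e^{(t-s)\xL_1}(\orho_s)$), observes that the $\oG$-terms cancel against $\xL_1(\orho_s)$ inside $\xL(\orho_s)$ leaving only $e^{(t-s)\xL_1}(\xL_2(\orho_s))$, and integrates over $[0,t]$. Your additional remarks on justifying the differentiability via \cref{lemma_domain} and \cref{prop_duality_K}, and the density fallback, only add detail to what the paper states tersely.
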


\begin{proof}
   The application $[0,t] \to \xK^1$ defined by $s\mapsto e^{(t-s)\oG} \orho_s e^{(t-s)\oG^\dag}$ is differentiable and
    
    \begin{align}
        \label{eq_equationon_tilderho}
        \frac{d}{ds}(e^{(t-s)\oG} \orho_s e^{(t-s)\oG^\dag}) &=
        e^{(t-s)\oG} \oG\orho_s e^{(t-s)\oG^\dag} + e^{(t-s)\oG} \orho_s\oG e^{(t-s)\oG^\dag }\\
        &+e^{(t-s)\oG} \xL(\orho_s) e^{(t-s)\oG^\dag}\\
        &=
        \sum_{j=1}^{\Ndissip} e^{(t-s)\oG} \oL_j \orho_s  \oL_j^\dag e^{(t-s)\oG^\dag}
    \end{align}
   Then, we integrate over $[0,t]$ to concludes.
\end{proof}

Let $\orho_0\in \xK_+^1\cap \xK_{\oLambda^4}$, by \cref{lem_apriori_estimate_sans_H}, for every $t\geq 0$, $\orho_t \in
\xK_{\oLambda^4}$, thus it belongs to $\xK_{\oLambda^2}$. Consequently, we ascertain the applicability of the Duhamel formula established in \cref{lemma_duhamel}. Thus, the proof of \cref{lem_oder1} is reduced to demonstrating the existence of positive constants $C_0$ and $C_j$ such that
\begin{align}
\label{eq_M0}
\|e^{\Delta t \oG}\orho_te^{\Delta t \oG^\dag} -\widetilde \oM_{0,\Delta t} \orho_t \widetilde \oM_{0,\Delta t} ^\dag\|_1                                                & \leq C_0 \|\orho\|_{L^\infty(0,T;\xK_{\oLambda^4})} \Delta t^2, \\
\label{eq_M1}
\|\int_{0}^{\Delta t} e^{(\Delta t-s)\oG}\oL_j \orho_{t+s} \oL_j^\dag e^{(\Delta t-s)\oG^\dag} ds -\widetilde \oM_{j,\Delta t} \orho_t \widetilde \oM_{j,\Delta t} ^\dag\|_1 & \leq C_j \|\orho\|_{L^\infty(0,T;\xK_{\oLambda^4})} \Delta t^2.
\end{align}
\subsubsection{Proof of estimate \eqref{eq_M0}}
\label{sec_proof_estimate_M0_sansH}
First, we have that $(e^{t\oG})_{t\geq 0}$ is a semigroup of contraction, thus $\|e^{t\oG}\|_\infty\leq 1$. Besides, $\widetilde \oM_{0,\Delta t}^\dag \widetilde \oM_{0,\Delta t} \leq \Id$, hence $\|\widetilde \oM_{0,\Delta t}\|_\infty \leq 1$.
Therefore, by introducing the cross-terms $$\pm (e^{\Delta t \oG}- \widetilde \oM_{0,\Delta t})\orho_t e^{\Delta t \oG^\dag},$$
we compute
\begin{align*}
& \|e^{\Delta t \oG}\orho_te^{\Delta t \oG} -\widetilde \oM_{0,\Delta t} \orho_t \widetilde \oM_{0,\Delta t} ^\dag\|_1                                                          \\
& \leq \| (e^{\Delta t \oG}-\widetilde \oM_{0,\Delta t}) \orho_t e^{\Delta t \oG}\|_1 + \| \widetilde \oM_{0,\Delta t} \orho_t (e^{\Delta t \oG}-\widetilde \oM_{0,\Delta t} ^\dag)\|_1 \\
& \leq 2 \| (e^{\Delta t \oG}-\widetilde \oM_{0,\Delta t}) \orho_t\|_1.
\end{align*}
All that remains to do is prove that $\| (e^{\Delta t \oG}-\widetilde \oM_{0,\Delta t})\|_{B(\xH^4,\xH)}\leq C (\Delta t)^2$.

\paragraph*{Under \cref{assump_sansH} ($\oH=0$)}
We have
$$\widetilde \oM_{0,\Delta t}= (\Id - \Delta t \frac{\oQ}{2})(\Id + (\Delta t \frac{\oQ}{2})^2)^{-1/2}.$$
Besides, for any positive real number $x\geq 0$,
$$0\leq e^{-x}-\frac{1-x}{\sqrt{1+x^2}}\leq x^2.$$
This implies that
\begin{align}
    0 \leq e^{-\Delta t \frac{\oQ}{2}}-\widetilde \oM_{0,\Delta t}\leq \frac{(\Delta t \oQ)^2}{4}.
\end{align}
As a consequence,
\begin{align}
    \|e^{-\Delta t \frac{\oQ}{2}}-\widetilde \oM_{0,\Delta t}\|_{B(\xH^4,\xH )}\leq C \Delta t^2,
\end{align}
leading to
\begin{align}
\| (e^{\Delta t \oG}-\widetilde \oM_{0,\Delta t}) \orho_t\|_1 & \leq C \Delta t^2 \| \orho_t \|_{\xK_{\oLambda^4}}\leq C \Delta t^2 \| \orho \|_{L^\infty(0,T;\xK_{\oLambda^4})}
\end{align}
Which concludes the proof of estimate \eqref{eq_M0}.

\paragraph*{Under \cref{assump_apriori} ($\oH\not =0$)}
We start with the following Taylor expansion (see e.g. \cite[Proposition 1.1.6]{butzerSemiGroupsOperatorsApproximation1967})
\begin{align}
    (e^{\Delta t \oG }- (\Id +\Delta t \oG))\ket{\psi}= \int_0^{\Delta t} (\Delta t-s)e^{s \oG } \oG^2 \ket{\psi}, \quad \ket{\psi} \in \D(\oG^2)
\end{align} 
for every $\ket{\psi}\in \xH^4 \subset \D(\oG^2)$. In the next equations, we consider both side of the equation as elements of $B(\xH^4,\xH)$. We can then compute 
\begin{align*}
    \oM_{0,\Delta t} - (\Id + \Delta t \oG)&=(\Id-i\frac{\Delta t}{2}  \oH)(\Id + i\frac{\Delta t}{2}  \oH)^{-1} (\Id- \frac{1}{2} \Delta t \oQ)- (\Id-i \Delta t \oH - \frac{\Delta t}{ 2}\oQ)\\
    &= \left( (\Id-i\frac{\Delta t}{2}  \oH)(\Id + i\frac{\Delta t}{2}  \oH)^{-1} - (\Id -\Delta t \oH) \right)(\Id- \frac{1}{2} \Delta t \oQ)\\
    &+ (\Id- i \Delta t \oH)(\Id- \frac{1}{2} \Delta t \oQ) - (\Id-i \Delta t \oH - \frac{\Delta t}{ 2}\oQ)\\
    &=\left( (\Id-i\frac{\Delta t}{2}  \oH)(\Id + i\frac{\Delta t}{2}  \oH)^{-1} - (\Id -i\Delta t \oH) \right)(\Id- \frac{1}{2} \Delta t \oQ) +i (\Delta t)^2 \oH\oQ.
\end{align*}
As $\oH\in B(\xH^2,\xH)$ and $\oQ\in B(\xH^4,\xH^2)$, we have $\oH\oQ \in B(\xH^4, \xH)$. Thus, we only have to tackle the first term of the last expression. We denote 
$$\oA_{\Delta t}=\left( (\Id-i\frac{\Delta t}{2}  \oH)(\Id + i\frac{\Delta t}{2}  \oH)^{-1} - (\Id -i\Delta t \oH) \right).$$
For every $x\in \R$, we have
\begin{align}
    \left| \frac{1-\frac{i}{2}x}{1+\frac{i}{2}x}-(1-ix)\right|^2\leq \min(x^2,x^4).
\end{align}
Thus,
\begin{align}
    \oA_{\Delta t}^\dag \oA_{\Delta t} \leq (\Delta t)^2 \oH^2,\quad \oA_{\Delta t}^\dag \oA_{\Delta t} \leq (\Delta t)^4 \oH^4
\end{align}
Multiplying by $\oLambda ^{-1}$ on both side of the first equations shows that $\|\oLambda^{-1}\oA_{\Delta t}^\dag \oA_{\Delta t}\oLambda^{-1}\|_\infty \leq C (\Delta t)^2$, implying $\|\oA_{\Delta t}\|_{B(\xH^2,\xH)}\leq C \Delta t$. Similarly we get $\|\oA_{\Delta t}\|_{B(\xH^4,\xH)}\leq C (\Delta t)^2$. Together with the fact that $\oQ\in B(\xH^4,\xH^2)$, we get
\begin{align}
    \|\oM_{0,\Delta t} - (\Id + \Delta t \oG)\|_{B(\xH^4,\xH)}\leq C (\Delta t)^2.
\end{align}
Thus,
\begin{align}
    \|(e^{\Delta t \oG }-\oM_{0,\Delta t})\orho_t\|_1 
    &\leq C \Delta t^2 \|\orho_t\|_{\xK_{\oLambda^4}}.
\end{align}
Next, we have 
\begin{align}
    (\oM_{0,\Delta t}-\widetilde \oM_{0,\Delta t})&=(\Id-i\frac{\Delta t}{2}  \oH)(\Id + i\frac{\Delta t}{2}  \oH)^{-1} (\Id- \frac{1}{2} \Delta t \oQ)(\Id-\oS_{\Delta t}^{-1/2}).
\end{align}
As $(\Id-i\frac{\Delta t}{2}  \oH)(\Id + i\frac{\Delta t}{2}  \oH)^{-1}$ is unitary, we get
\begin{align*}
    \|(\oM_{0,\Delta t}-\widetilde \oM_{0,\Delta t})\|_{B(\xH^4,\xH)}=\|(\Id- \frac{1}{2} \Delta t \oQ)(\Id-\oS_{\Delta t}^{-1/2})\|_{B(\xH^4,\xH)}.
\end{align*}
Then, using that for every positive real number $x \geq 0$,
\begin{align}
    -x^2 \leq (1-x)\left( 1 - \frac{1}{\sqrt{1+x^2}}\right) \leq x^2,
\end{align}
we obtain 
\begin{align}
    -(\Delta t)^2 \frac{\oQ^2}{4} \leq (\Id- \frac{1}{2} \Delta t \oQ)(\Id-\oS_{\Delta t}^{-1/2})\leq (\Delta t)^2 \frac{\oQ^2}{4}.
\end{align}
Hence, $\|(\Id- \frac{1}{2} \Delta t \oQ)(\Id-\oS_{\Delta t}^{-1/2})\|_{B(\xH^4,\xH)} \leq C (\Delta t)^2$, implying
\begin{align}
    \|(\oM_{0,\Delta t}-\widetilde \oM_{0,\Delta t})\orho_t\|_1
    &\leq C \Delta t^2 \|\orho_t\|_{\xK_{\oLambda^4}}
\end{align}
    \begin{comment}
    \paragraph{Second case: under the assumption $\sup_{t\in [0,T]}\xtr{(L^\dag L)^4\orho_t}< \infty .$}
    We start by applying Cauchy-Schwarz to one of the cross-term
    \begin{align}
        \label{eq_\oM_0_CS}
        \|(e^{-\frac{\Delta t}{2}L^\dag L}-\widetilde \oM_{0,\Delta t})\orho_t e^{-\frac{\Delta t}{2}L^\dag L}\|_1\leq  \| (e^{-\frac{\Delta t}{2}L^\dag L}-\widetilde \oM_{0,\Delta t}) \orho_t^{1/2}\|_2 \|\orho_t^{1/2}e^{-\frac{\Delta t}{2}L^\dag L}\|_2.
    \end{align}
    Using that $e^{-\frac{\Delta t}{2}L^\dag L}$ is bounded in operator norm by $1$, we get
    \begin{align}
        \|\orho_t^{1/2}e^{-\frac{\Delta t}{2}L^\dag L}\|_2^2= \sqrt{\xtr{e^{-\frac{\Delta t}{2}L^\dag L} \orho_t e^{-\frac{\Delta t}{2}L^\dag L}}}
        \leq 1
    \end{align}
    The first element in the product of the right-hand side of \cref{eq_\oM_0_CS} can be estimated as follows
    \begin{align}
        \| (e^{-\frac{\Delta t}{2}L^\dag L}-\widetilde \oM_{0,\Delta t}) \orho_t^{1/2}\|_2&= \sqrt{\xtr{\orho_t^{1/2}(e^{-\frac{\Delta t}{2}L^\dag L}-\widetilde \oM_{0,\Delta t})^2\orho_t^{1/2}}}\\
        & \leq \frac{\Delta t^2}{4}\sqrt{\xtr{(L^\dag L)^4 \orho_t}},
    \end{align}
    where we used that
    \begin{align*}
        0 \leq (e^{-\frac{\Delta t}{2}L^\dag L}-\widetilde \oM_{0,\Delta t})^2\leq \frac{(\Delta t L^\dag L)^4}{16}.
    \end{align*}
    The exact same reasoning with 
    $\|(e^{-\frac{\Delta t}{2}L^\dag L}-\widetilde \oM_{0,\Delta t})\orho_t \widetilde \oM_{0,\Delta t}\|_1$ finishes the proof of estimate \eqref{eq_M0}.
\end{comment}
\subsubsection{Proof of estimate \eqref{eq_M1}}
\label{sec_proof_estimat_eq_M1}
    The sketch of proof is the following:
\begin{align}
    \tag{Step 1}
    \int_0^{\Delta t} e^{(\Delta t-s)\oG}\oL_j \orho_{t+s}  \oL_j^\dag e^{(\Delta t-s)\oG^\dag}ds
    &=\int_0^{\Delta t}
e^{(\Delta t-s)\oG}\oL_j \orho_{t}  \oL_j^\dag e^{(\Delta t-s)\oG^\dag} ds +O(\Delta t^2)\\
\tag{Step 2}
&= \oM_{j,\Delta t} \orho_t \oM_{j,\Delta t}^\dag+O(\Delta t^2)\\
\tag{Step 3}
&=\widetilde \oM_{j,\Delta t} \orho_t \widetilde \oM_{j,\Delta t}^\dag+O(\Delta t^2),
\end{align}
where $O(\Delta t^2)$ means an element of $\xK^1$ with trace norm bounded by $C \Delta t ^2 \|\orho\|_{L^\infty(0,T;\xK_{\oLambda^4})}$ for $\Delta t$ small enough.
\paragraph*{Step 1}
\begin{align}
   \notag \|e^{(\Delta t-s)\oG}\oL_j (\orho_{t+s}-\orho_t) \oL_j^\dag e^{(\Delta t-s)\oG^\dag}\|_1 &\leq \|\oL_j (\orho_{t+s}-\orho_t) \oL_j^\dag\|_1\\
    \notag &\leq \int_0^s \|\oL_j \xL(\orho_{t+s'}) \oL_j^\dag\|_1 ds'\\
    &= \int_0^s \|\oL_j (\oG\orho_{t+s'}-\orho_{t+s'} \oG^\dag +\sum_{i=1}^{\Ndissip} \oL_i\orho_{t+s'}\oL_i^\dag  )\oL_j^\dag\|_1 ds'.
\end{align}

We recall that we assumed that $\oL_i \in B(\xH^5,\xH^4)$, besides we have already shown that $\oL_i \in B(\xH^1,\xH^0)$ thus by classical result of interpolation (see \cref{app_interpol}), $\oL_i \in B(\xH^{l+1},\xH^l)$ for $0\leq l\leq 4$.
As a consequence, we have $\oL_j\oL_i\in B(\xH^2,\xH)$, implying
\begin{align*}
    \|\oL_j\oL_i\orho_{t+s'}\oL_i^\dag \oL_j^\dag\|_1 \leq \|\oL_j\oL_i\|^2_{B(\xH^2,\xH)}\|\orho_{t+s'}\|_{\xK_{\oLambda^2}}.
\end{align*}
Similarly, $\oL_j\oG$ belongs to $B(\xH^3,\xH)$, thus
\begin{align}
    \|\oL_j \oG \orho_{t+s'} \oL_j^\dag\|_1 &\leq \|\oL_j \oG\|_{B(\xH^3,\xH)} \|\orho_{t+s'}\|_{\xK_{\oLambda^3}} \|\oL_j\|_{B(\xH^{1},\xH)}.
\end{align}
Leading to
\begin{align}
    \|e^{(\Delta t-s)\oG}\oL_j (\orho_{t+s}-\orho_t) \oL_j^\dag e^{(\Delta t-s)\oG^\dag}\|_1\leq s C \|\orho_{t+s'}\|_{\xK_{\oLambda^3}}.
\end{align}
We integrate for $s\in [0,\Delta t]$ and this concludes step 1
\paragraph*{Step 2}
We start with
\begin{align}
    \|e^{(\Delta t-s)\oG}\oL_j \orho_{t} \oL_j^\dag e^{(\Delta t-s)\oG^\dag}- \oL_j \orho_{t} \oL_j^\dag\|_1 &\leq 2 \|(e^{(\Delta t-s)\oG}- \Id)\oL_j \orho_{t} \oL_j^\dag\|_1
\end{align}
As $\|(e^{(\Delta t-s)\oG}- \Id)\|_{B(\xH^2,\xH)}\leq C (\Delta t-s)$, we get
\begin{align*}
    \|(e^{(\Delta t-s)\oG}- \Id)\oL_j \orho_{t} \oL_j^\dag\|_1\leq C (\Delta t-s) \|\oL_j\|_{B(\xH^3,\xH^2)} \|\orho_{t}\|_{\xK_{\oLambda^3}} \|\oL_j \|_{B(\xH^1,\xH)}
\end{align*}
Integration for $s\in[0,\Delta t]$ leads to the end of step 2.

\paragraph*{Step 3}
We have
\begin{align}
    \oM_{j,\Delta t}-\widetilde \oM_{j,\Delta t}= \sqrt{\Delta t} \oL_j (\Id-\oS_{\Delta t}^{-1/2})
\end{align}
Thus
\begin{align}
    0\leq (\oM_{j,\Delta t}-\widetilde \oM_{j,\Delta t})^\dag (\oM_{j,\Delta t}-\widetilde \oM_{j,\Delta t})&= \Delta t (\Id-\oS_{\Delta t}^{-1/2}) \oL_j^\dag \oL_j (\Id-\oS_{\Delta t}^{-1/2})\\
    & \leq \Delta t (\Id-\oS_{\Delta t}^{-1/2}) \oQ (\Id-\oS_{\Delta t}^{-1/2})
\end{align}
Besides, for every $x\geq 0$, a standard computation shows 
\begin{align}
    0 \leq 2x(1-\frac{1}{\sqrt{1+x^2}})^2\leq  x^4.
\end{align}
Thus,
\begin{align}
    \Delta t (\Id-\oS_{\Delta t}^{-1/2}) \oQ (\Id-\oS_{\Delta t}^{-1/2}) \leq \Delta t^4 \frac{\oQ^4}{2^4}.
\end{align}
As $\oQ^2 \in B(\xH^4,\xH)$, we obtain
\begin{align}
    \|\oM_{j,\Delta t}-\widetilde \oM_{j,\Delta t}\|_{B(\xH^4,\xH)}\leq C \Delta t^2,
\end{align}
which concludes the proof of estimate \eqref{eq_M1}, thus of \cref{lem_oder1} and \cref{th_onedissipator_firstorder}.

\section{Galerkin approximation, stability of the schemes and numerical results}
\label{sec_Galerkin_and_num}
Now that we have established the convergence of the quantum channel schemes defined in \cref{eq_def_scheme} for unbounded Lindblad equations, let us dive into the implications for the time discretization of a Galerkin approximation of the Lindblad equation.
\subsection{Galerkin approximation}
\label{ref_Galerkin}
 We consider an increasing sequence $(\xH_n)_n$ of Hilbert space included in $\xH$. We raise the attention of the reader to the fact that $\xH_n$ is not related to the space $\xH^n$ introduced in \cref{subsec_H_n}.
We denote by $\oP_n$ the orthogonal projectors onto $\xH_n$. We assume that $\oP_n$ converges strongly towards $\Id$ and that for all $n$, $\xH_n \subset \D(\oG)$. Then, one can define the truncated operators
\begin{align}
   \oH_n=\oP_n \oH \oP_n, \quad \oL_{j,n}=\oP_n \oL_{j} \oP_n
\end{align}
and consider the Lindbladian $\xL_n$ defined by
\begin{align}
    \label{eq-Lindblad_truncated}
    \xL_n(\orho)=-i[\oH_n,\orho] + \sum_{j=1}^{\Ndissip} D_{\oL_{j,n}}(\orho).
\end{align}
As the operators $\oH_n$ and $\oL_{j,n}$ are bounded, $\xL_n$ is the generator of a uniformly continuous quantum Markov semigroup as studied initially by Lindblad in \cite{lindbladGeneratorsQuantumDynamical1976}.
\begin{example}
    \label{ex-fock}
	In the case $\xH = L^2(\R,\C)$, we will consider the typical choice $\xH_n=\{ \ket{k}\mid 0 \leq k\leq n\}$, where $(\ket{n})_{n \in \xN}$ denotes the so-called Fock basis (that is the eigenbasis of a quantum harmonic oscillator).
\end{example}
%\begin{remark}
%    Note that the Lindbladian introduce in \cref{eq-Lindblad_truncated} is not the only choice to project the dynamics on density matrix of the finite dimensional Hilbert space $\xH_N$. If $V,W$ are operarors on $\xH$, $V_NW_N \not = (VW)_N$ in general, thus one has to be careful as for e.g. the discretization
%    $$\Gamma^\dag_N \orho \Gamma_N -\frac{1}{2}( P_N\Gamma^\dag\Gamma P_N \orho+\orho P_N\Gamma^\dag\Gamma P_N )$$ is not a dissipator in general, hence do not lead to a Lindbladian.\\
%    Note also that if one approxime $\Gamma$ on $\xH_N$ by a product of truncated operaror, like for example $a_N a^\dag_N$, one does not always fit in our framework (e.g. \cref{eq-L-L_N_projected} does not hold). Nevertheless, $a^\dag_Na_N + \Id_N=(a^\dag \oa + \Id)_N$ fits.
%\end{remark}

%Let us now consider a density operaror $\orho_0$ and define $\orho(t)$ the solution of \cref{eq-lindblad_full} with initial condition $\orho_0$.

\subsection{Explicit analysis of time discretization of the multi-photons loss channel}
We consider the following Lindblad equation on $\mathcal H = L^2(\R,\C)$:
\begin{align}
    \label{eq_photon_loss}
    \frac{d}{dt} \orho =D[\oa^l](\orho),
\end{align}
where $\oa$ is the usual annihilation operator on $\mathcal H$ and $l\geq 1$ is a fixed parameter;
this equation models, for instance, a dissipative process where a quantum harmonic oscillator loses $l$-uplets of photons. For $l=1$ we obtain the so-called photon loss channel which is the dominant error channel in many quantum experiments, for example in quantum optics or superconducting circuits. On the other hand, for $l\geq 2$, the coherent loss of $l$-uplets of photons can be used as a powerful resource for bosonic error correction--see e.g. \cite{mirrahimiDynamicallyProtectedCatqubits2014,leghtasConfiningStateLight2015}.

It is known that the solutions of \cref{eq_photon_loss} converge, as $t$ goes to infinity, to a density operator that depends on the initial condition and has support on $\operatorname*{Span}\left\{\ket{j}\mid 0\leq j \leq l-1\right\}$.
This is, for instance, a direct application of the results in \cite{azouitWellposednessConvergenceLindblad2016}.

The aim of this section is to show that an explicit Euler scheme is subject to a Courant--Friedrichs--Lewy (CFL) type condition: the dimension of the discretization space $\xH_n$ imposes a constraint on the smallness of the time-step $\Delta t$ to ensure the stability of the scheme. On the other hand, the quantum channel schemes introduced in this article do not suffer from this weakness and exhibit a robustness in the time discretization independent of the Galerkin truncation.
\begin{remark}
    \label{rmk_stability_photon_loss}

    Note that, for any $n\in \N$, the fact that $\xH_n$ is stable by $\oa$ entails that the convex cone $\xK_+^1 \cap B(\xH_n)$ is stable under \cref{eq_photon_loss}:
	in fact, we even have that if $\orho_0\in \xK_+^1 \cap B(\xH_n)$, then $\S_t \orho_0=e^{t\xL_n}\orho_0$.
\end{remark}

\subsubsection{A CFL condition for the first-order explicit Euler scheme}
\label{subsubsec:CFL}
The first-order explicit Euler scheme applied to the Galerkin truncation $n$ yields:
\begin{align}
    \mathcal{E}_1^{\Delta t,n}(\orho)= \orho + \Delta t \left( \oa^l_n \orho \oa^{l\,\dag}_n -\frac{1}{2}(\oa^{l\,\dag}_n \oa^l_n \orho + \orho \oa^{l\,\dag}_n \oa^l_n)\right).
\end{align}
A simple computation shows that for any $0\leq  k \leq n$, we have
    \begin{align}
        \oa^{l\,\dag}_n \oa^l_n \ket{k}=c_k \ket{k},
	    \quad \oa^l_n \ket k = \sqrt{c_k} \ket{k-l},
	    \quad c_{k}=\Pi_{j=0}^{l-1}(k-j).
    \end{align}
Note that we don't make the dependence of $c_k$ on the parameter $l$ explicit to alleviate the notations. 
\begin{lemma}[CFL condition on $l$-photon loss.] If $\Delta t c_n > 2$, the scheme $\mathcal{E}_1^{\Delta t,n}$ is unstable.
\end{lemma}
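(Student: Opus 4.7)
The plan is to exhibit an explicit initial state whose iterates under $\mathcal{E}_1^{\Delta t,n}$ have trace norm growing geometrically, thereby witnessing instability. The natural candidate is the top Fock state $\orho_0 = \ket n\bra n$ inside the truncation, since $c_k = \Pi_{j=0}^{l-1}(k-j)$ is non-decreasing in $k$ and $c_n$ is the dangerous eigenvalue of $\oa_n^{l\,\dag}\oa_n^l$ that appears in the Euler step.

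First I would observe that the subspace of operators diagonal in the Fock basis is stable under $\mathcal{E}_1^{\Delta t,n}$. Indeed, using $\oa_n^l\ket k = \sqrt{c_k}\ket{k-l}$ and $\oa_n^{l\,\dag}\oa_n^l\ket k = c_k\ket k$, a direct computation gives
\begin{align}
    \mathcal{E}_1^{\Delta t,n}\bigl(\ket k\bra k\bigr) = (1-\Delta t\, c_k)\,\ket k\bra k + \Delta t\, c_k\,\ket{k-l}\bra{k-l},
\end{align}
with the convention $c_k = 0$ for $k < l$. Thus, in the basis $\bigl(\ket k\bra k\bigr)_{0\leq k\leq n}$ of diagonal operators, $\mathcal{E}_1^{\Delta t,n}$ is represented by an upper triangular matrix whose diagonal entries are exactly the real numbers $1-\Delta t\, c_k$.

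Second I would specialize to $\orho_0 = \ket n\bra n$ and track the coefficient of $\ket n\bra n$ along the iterations. Since $\ket n\bra n$ appears in the image of $\mathcal{E}_1^{\Delta t,n}(\ket k\bra k)$ only when $k=n$ (there is no $k' > n$ inside the truncation that could feed into level $n$), an immediate induction gives
\begin{align}
    \bra n\bigl(\mathcal{E}_1^{\Delta t,n}\bigr)^m(\ket n\bra n)\ket n = (1-\Delta t\, c_n)^m.
\end{align}
Because the iterate stays diagonal and Hermitian, its trace norm is the sum of the absolute values of its diagonal entries, which in particular dominates any single diagonal entry:
\begin{align}
    \bigl\|(\mathcal{E}_1^{\Delta t,n})^m(\ket n\bra n)\bigr\|_1 \geq |1-\Delta t\, c_n|^m.
\end{align}

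Finally, the assumption $\Delta t\, c_n > 2$ yields $|1-\Delta t\, c_n| > 1$, so the right-hand side above blows up geometrically as $m\to\infty$, even though the initial state has unit trace norm. This is exactly unboundedness of the discrete semigroup $(\mathcal{E}_1^{\Delta t,n})^m$ on $\xK^1(\xH_n)$, i.e.\ instability of the scheme. No step in this argument is delicate; the only thing to be careful about is that the matrix representation on diagonal operators is genuinely triangular, which is why only the $c_n$ term contributes to the self-coupling and the geometric factor $(1-\Delta t\, c_n)^m$ survives under iteration.
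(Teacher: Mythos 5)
Your proof is correct and follows essentially the same route as the paper: the paper likewise observes that the $(n,n)$ coefficient decouples and is multiplied by $1-\Delta t\,c_n$ at each step, so that $\Delta t\,c_n>2$ forces geometric blow-up. You merely make explicit what the paper leaves implicit (the choice $\orho_0=\ket n\bra n$ and the trace-norm lower bound), which is a fine elaboration but not a different argument.
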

\begin{proof}
Focusing on the $(n,n)$ term of $\orho$, we have the decoupled equation
    \begin{align}
	    \bra{n}\mathcal{E}_1^{\Delta t,n}(\orho)\ket{n}= (1- \Delta t c_n) \bra{n}\orho\ket{n}
    \end{align}
This shows the unstability of the scheme if $\Delta t c_n>2$.
\end{proof}
Conversely, we have the following stability property for $\Delta t c_n < 2$:
\begin{lemma}
    For a given $n\in \xN$, if $\Delta t c_n < 2$, then for any initial condition $\orho_0 \in \xH_n$, there exists $\orho_\infty \in B(\xH_n)$ such that $P_l\orho_\infty P_l=\orho_\infty$ and
    $(\mathcal{E}_1^{\Delta t,n})^p(\orho)\xrightarrow[p \to \infty]{} \orho_\infty$ at a geometric rate.
\end{lemma}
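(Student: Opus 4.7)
The plan is to write $\mathcal E_1^{\Delta t,n}$ in the Fock basis, observe that it decouples into a direct sum of finite-dimensional upper-triangular linear maps on disjoint chains of matrix entries, and conclude from an elementary spectral analysis of these triangular blocks.

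First, using $\oa^l \ket{k} = \sqrt{c_k}\ket{k-l}$ (with the convention $c_k = 0$ for $k < l$) together with $\oa^{l\,\dag}\oa^l\ket{k} = c_k \ket{k}$, a direct computation gives
\begin{align}
    \bra{i}\mathcal{E}_1^{\Delta t,n}(\orho)\ket{j}
    = \Big(1 - \tfrac{\Delta t}{2}(c_i + c_j)\Big)\rho_{ij}
    + \Delta t\,\sqrt{c_{i+l}\,c_{j+l}}\,\rho_{i+l,\,j+l},
\end{align}
for $0 \leq i,j \leq n$, with $\rho_{i+l,j+l} = 0$ whenever $i+l > n$ or $j+l > n$. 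Each entry $(i,j)$ is thus only coupled to $(i+l,j+l)$. Grouping the entries into chains $\mathcal C_{i_0,j_0} := \{(i_0+kl,\,j_0+kl) : 0 \leq k \leq K_{i_0,j_0}\}$ indexed by their bottom element $(i_0,j_0) \in \{0,\ldots,n\}^2$ with $\min(i_0,j_0) < l$ (so that no further subtraction of $l$ is possible), these chains partition $\{0,\ldots,n\}^2$, and the restriction $M_{i_0,j_0}$ of $\mathcal E_1^{\Delta t,n}$ to such a chain, written with the coordinates $\rho_k = \rho_{i_0+kl,\,j_0+kl}$, is the upper bidiagonal map
\begin{align}
    (M_{i_0,j_0}\rho)_k = \lambda_k\,\rho_k + \mu_k\,\rho_{k+1}, \qquad
    \lambda_k := 1 - \tfrac{\Delta t}{2}\big(c_{i_0+kl} + c_{j_0+kl}\big),
\end{align}
with $\mu_k > 0$ for $k < K_{i_0,j_0}$ and the boundary convention $\rho_{K_{i_0,j_0}+1} = 0$. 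Its spectrum is the set of diagonal entries $\{\lambda_0,\ldots,\lambda_{K_{i_0,j_0}}\}$.

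Second, I analyze these eigenvalues. Since $c_m \leq c_n$ for $m \leq n$, the CFL assumption $\Delta t\,c_n < 2$ forces $-1 < 1 - \Delta t\,c_n \leq \lambda_k \leq 1$, with $\lambda_k = 1$ iff $c_{i_0+kl} = c_{j_0+kl} = 0$, i.e.\ both $i_0+kl$ and $j_0+kl$ are $< l$, which forces $k = 0$ together with $i_0, j_0 < l$. Two cases follow. If $\max(i_0,j_0) \geq l$, then $|\lambda_k| < 1$ for every $k$ and $M_{i_0,j_0}^p \to 0$ geometrically. Otherwise $i_0, j_0 < l$: then $\lambda_0 = 1$ is a simple eigenvalue (all other $\lambda_k$, $k \geq 1$, lie strictly inside the open unit disk), with right eigenvector $e_0$. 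Indeed, the identity $\mu_k\, v_{k+1} = (1 - \lambda_k)\,v_k$ obtained from $M_{i_0,j_0}v = v$, combined with $\lambda_0 = 1$ and $\mu_k > 0$, forces $v_k = 0$ for all $k \geq 1$ by induction. Standard spectral theory then yields $M_{i_0,j_0}^p \to \Pi_{i_0,j_0}$ geometrically, where $\Pi_{i_0,j_0}$ is the rank-one spectral projector onto $\operatorname{span}(e_0)$.

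Reassembling the blocks, $(\mathcal E_1^{\Delta t,n})^p(\orho_0)$ converges geometrically, at the common rate $\max\{|\lambda_k| : \lambda_k \neq 1\} < 1$, to an operator $\orho_\infty$ whose Fock-basis matrix entries vanish outside $\{(i,j) : i,j < l\}$, which is exactly $P_l\,\orho_\infty\,P_l = \orho_\infty$. The whole argument reduces to the Fock-basis decoupling and an elementary triangular-matrix spectral analysis; no substantial obstacle is anticipated, the CFL bound $\Delta t\,c_n < 2$ being precisely what keeps all non-unit eigenvalues strictly inside the open unit disk.
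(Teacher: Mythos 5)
Your proposal is correct and follows essentially the same route as the paper: write $\mathcal{E}_1^{\Delta t,n}$ in the Fock basis, observe that the entries $(i,j)$ only couple upward to $(i+l,j+l)$ so the map is triangular, and read the eigenvalues $1-\tfrac{\Delta t}{2}(c_i+c_j)$ off the diagonal, with the CFL bound $\Delta t\,c_n<2$ placing them in $(-1,1]$. Your version is in fact more complete than the paper's, which loosely asserts that all eigenvalues have magnitude less than $1$ and does not spell out the role of the eigenvalue $1$ (attained exactly on the entries with $i,j<l$) nor why the limit is supported on $P_l\H P_l$; your chain decomposition and the simplicity argument via $\mu_k v_{k+1}=(1-\lambda_k)v_k$ supply exactly these missing details.
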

\begin{proof}
    Let us assume $\Delta t c_n < 2$, we have the system of coupled equations
    \begin{align}
        \label{eq_multiphotons_scheme}
        \bra{k}\mathcal{E}_1^{\Delta t,n}(\orho)\ket{m}= \left\{
        \begin{array}{cr} 
		\left( 1- \Delta t \frac{c_k+c_m}{2}\right) \bra{k}\orho\ket{m}& \text{ if }\max \{k+l,m+l\}>n \\
		\\
		\left(1- \Delta t \frac{c_k+c_m}{2}\right) \bra{k}\orho\ket{m}+\Delta t \sqrt{c_{k+l} c_{m+l}} \bra{k+l}\orho\ket{m+l}& \text{ otherwise}
        \end{array}
        \right.
    \end{align}
Note that this system is triangular, \emph{i.e.} $\bra{k}\mathcal{E}_1^{\Delta t,n}(\orho)\ket{m}$
	depends only on values $\bra{k'} \orho \ket{m'}$ with $k'+m' \geq k+m$ --
	hence, we readily deduce that the eigenvalues of $\mathcal{E}_1^{\Delta t,n}$ are given
	by the diagonal coefficients $\left(1- \Delta t \frac{c_k+c_m}{2}\right)$,
	all of magnitude less than $1$ since $0 \leq c_k \leq c_n$ for $k\leq n$.
 
%__%	Hence, the equations for $n-l < \max \{k,m\} \leq n$ are decoupled. Besides as $c_k \leq c_n$ for $k \leq n$, if $\max\{k,m\} \geq l$, the sequence $(\bra{k}(\mathcal{E}_1^{\Delta t,n})^p(\orho)\ket{m})_{p \in \N}$ converges geometrically towards $0$ with rate $1-\Delta t \frac{c_k+c_m}{2}$ that satisfies $0<\Delta t \frac{c_k+c_m}{2}\leq \Delta t c_n <2$.
%__%
%__%    Moreover, for $k,m \leq n$ and $\max\{k,m\} \geq l$, we have
%__%    \begin{align*}
%__%        \bra{k}\mathcal{E}_1^{\Delta t,n}(\orho)\ket{m}=\bra{k}\orho\ket{m}- \Delta t \frac{c_k+c_m}{2} \bra{k}\orho\ket{m}+\Delta t \sqrt{c_{k+l} c_{m+l}} \bra{k+l}\orho\ket{m+l}.
%__%    \end{align*}
%__%    Doing a proof by induction, we can assume that $\bra{k+l}(\mathcal{E}_1^{\Delta t,n})^p(\orho)\ket{m+l}$ converges geometrically towards $0$, and we deduce that $\bra{k}(\mathcal{E}_1^{\Delta t,n})^p(\orho)\ket{m}$ also converges geometrically towards $0$.
%__%
%__%    Finally, if $0\leq k,m \leq l$, then $c_k=c_l=0$, thus \cref{eq_multiphotons_scheme} becomes
%__%    \begin{align*}
%__%        \bra{k}\mathcal{E}_1^{\Delta t,n}(\orho)\ket{m}= \bra{k}\orho\ket{m}+\Delta t \sqrt{c_{k+l} c_{m+l}} \bra{k+l}\orho\ket{m+l}
%__%    \end{align*}
%__%	As $(\bra{k+l} \left( \mathcal{E}_1^{\Delta t,n}\right)^p(\orho) \ket{m+l})_{p\in \N}$ converges geometrically towards $0$, $\bra{k}\mathcal{E}_1^{\Delta t,n}(\orho)\ket{m}$ has a limit and converges geometrically towards it.
%__%
\end{proof}
\subsubsection{Proof of convergence of the quantum channel schemes}
\label{subsec:cvg_ex_cptp}
Let us now show that the $l$-photon loss channel, modeled by \cref{eq_photon_loss}, satisfies \cref{assump_sansH} with $\oLambda=\Id + \oa^{l\dag}\oa^l$. Indeed, as $\oLambda^k$ and $\Id + (\oa^{\dag}\oa)^{kl}$ are relatively bounded with respect to one another, we have
\begin{equation}
    \xH^k=\{ \ket{\psi}=\sum_n \psi_n \ket{n}\mid \sum_n (1+n^{kl}) |\psi_n|^2 < \infty\}.
\end{equation}
We easily see that $\xH^2$ is dense in $\xH$, and $\oa^{l\dag}\oa^l$ with domain $\xH^2$ is self-adjoint. Besides $\oL=\oa^l$ is bounded from $\xH^{n+1}$ to  $\xH^{n}$ for all $n\in \N$. It remains to establish the \textit{a priori} estimate of \cref{apriori_sans_H}. Formally, we have
\begin{align*}
    \xL^*((\Id+ \oL^\dag \oL)^4)&=\oL^\dag (\Id+ \oL^\dag \oL)^4 \oL -\oL^\dag \oL (\Id+  \oL^\dag \oL)^4\\
    &=\oL^\dag [(\Id+ \oL^\dag \oL)^4,\oL].
\end{align*}
Introducing $f(k)=\prod_{j=0}^{l-1}(k-j)$, one can check that $\oL^\dag \oL=f(\oa^\dag \oa)$. Introducing now $g=(1+f)^4$ and noting that both $f$ and $g$ are increasing, we get
\begin{align}
    \oL^\dag [(\Id+ \oL^\dag \oL)^4,\oL]= \oa^{\dag l} [g(\oa^\dag \oa),\oa^l]=\oa^{\dag l} \oa^l (g(\oa^\dag \oa-l\Id )-g(\oa^\dag \oa))
    =f(\oa^\dag \oa) (g(\oa^\dag \oa-l\Id )-g(\oa^\dag \oa)) \leq 0
\end{align}
As a consequence, \cref{apriori_sans_H} holds with $\gamma =0$. We conclude that \cref{th_onedissipator_firstorder} holds; that is, for any $T>0$, there exists $C>0$ such that the scheme
\begin{align}
    \widetilde \oM_{0,\Delta t}&= (\Id - \frac{\Delta t}{2} \oL^\dag \oL)(\Id + \frac{(\Delta t \oL^\dag \oL)^2}{4})^{-1/2},\\
    \widetilde \oM_{1,\Delta t}&= \sqrt{\Delta t }\oL (\Id + \frac{(\Delta t \oL^\dag \oL)^2}{4})^{-1/2},\\
    \xF_{\Delta t} (\orho)&= \widetilde \oM_{0,\Delta t} \orho \widetilde \oM_{0,\Delta t}^\dag +\widetilde \oM_{1,\Delta t} \orho \widetilde \oM_{1,\Delta t}^\dag,
\end{align}
satisfies, for every $(\Delta t,N)\in \mathbb{R}^+ \times \mathbb{N}$ such that $\Delta t N = T$, and for every $\orho_0 \in \mathcal{K}^1_+ \cap \mathcal{K}_{\oLambda^4}$,
\begin{align}
    \|\orho_T-\xF_{\Delta t}^N(\orho_0)\|_1 \leq C \|\orho_0\|_{\xK_{\oLambda^4}} \Delta t.
\end{align}

Introducing the same quantum channel scheme $\xF_{\Delta t,n}$ for the Lindblad equation $\xL_n$ on the Hilbert space $\xH_n$, we have
\begin{align}
    \widetilde \oM_{0,\Delta t,n}&= (\Id - \frac{\Delta t}{2} \oL_n^\dag \oL_n)(\Id + \frac{(\Delta t \oL_n^\dag \oL_n)^2}{4})^{-1/2},\\
    \widetilde \oM_{1,\Delta t,n}&= \sqrt{\Delta t }\oL_n (\Id + \frac{(\Delta t \oL_n^\dag \oL_n)^2}{4})^{-1/2},\\
    \xF_{\Delta t,n} (\orho)&= \widetilde \oM_{0,\Delta t,n} \orho \widetilde \oM_{0,\Delta t,n}^\dag +\widetilde \oM_{1,\Delta t,n} \orho \widetilde \oM_{1,\Delta t,n}^\dag.
\end{align}
Note that the restriction of $\xF_{\Delta t}$ to element of $B(\xH_n)$ coincide with $\xF_{\Delta t,n}$,
since $\xH_n$ is stable by both $\oL$ and $\oL^\dag \oL$. This leads to:
\begin{proposition}
    \label{prop_stability_scheme_multi_photon}
    Let $l \in \N$ and $T>0$. There exists a positive constant $C>0$ such that, for every $n \in \N$, every $\orho_0 \in \xK^1_+ \cap B(\xH_n)$ and every $(\Delta t,N)\in \R^+ \times \xN$ such that $\Delta t N =T$, we have 
    \begin{align}
	    \|\S_t(\orho_0)-\xF_{\Delta t,n}^N(\orho_0)\|_1 \leq C \|\orho_0\|_{\xK_{\oLambda^4}} \Delta t.
    \end{align}
\end{proposition}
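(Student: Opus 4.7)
The plan is to reduce the claim to Theorem \ref{th_onedissipator_firstorder} by exploiting the invariance of $\xH_n$ under all the operators appearing in the scheme and in the Lindbladian. In Section \ref{subsec:cvg_ex_cptp} we already verified that \cref{assump_sansH} holds for the $l$-photon loss channel with $\oLambda = \Id + \oa^{l\dag}\oa^l$ (and in fact $\gamma = 0$), so Theorem \ref{th_onedissipator_firstorder} gives a first-order estimate in the full infinite-dimensional setting. The task is therefore to show that, restricted to states supported in $\xH_n$, the Galerkin scheme $\xF_{\Delta t, n}$ and the Galerkin continuous evolution $e^{t\xL_n}$ coincide with their untruncated counterparts $\xF_{\Delta t}$ and $\S_t$.

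First I would verify that $\xH_n = \operatorname{Span}\{\ket{0},\ldots,\ket{n}\}$ is stable under both $\oL = \oa^l$ (clear from $\oa^l \ket{k} = \sqrt{c_k}\ket{k-l}$) and $\oL^\dag \oL = f(\oa^\dag \oa)$ (diagonal in the Fock basis). Since the positive operator $\oS_{\Delta t} = \Id + \tfrac{\Delta t^2}{4}(\oL^\dag \oL)^2$ is a function of the number operator $\oa^\dag \oa$ alone, it is diagonal in the Fock basis, hence so is $\oS_{\Delta t}^{-1/2}$ by functional calculus. Consequently $\oS_{\Delta t}^{-1/2}$ preserves $\xH_n$ and commutes with $\oP_n$, and on $\xH_n$ it coincides with $\oS_{\Delta t, n}^{-1/2}$ (the corresponding operator built from $\oL_n$). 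From this one deduces that $\widetilde{\oM}_{j,\Delta t}$ and $\widetilde{\oM}_{j,\Delta t, n}$ agree on $\xH_n$, and therefore
\[
\xF_{\Delta t}(\orho) = \xF_{\Delta t, n}(\orho), \qquad \orho \in B(\xH_n).
\]
By induction on $N$, this identity propagates to $\xF_{\Delta t}^N(\orho_0) = \xF_{\Delta t, n}^N(\orho_0)$ for any $\orho_0 \in B(\xH_n)$.

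Next, \cref{rmk_stability_photon_loss} gives $\S_T(\orho_0) = e^{T\xL_n}(\orho_0)$ for $\orho_0 \in \xK^1_+ \cap B(\xH_n)$, so the left-hand side of the target inequality equals $\|\S_T(\orho_0) - \xF_{\Delta t}^N(\orho_0)\|_1$. Since any $\orho_0 \in B(\xH_n)$ is finite rank with support in $\xH_n \subset \xH^\infty$, it automatically lies in $\xK^1_+ \cap \xK_{\oLambda^4}$ with finite $\xK_{\oLambda^4}$-norm. Applying Theorem \ref{th_onedissipator_firstorder} directly yields the required bound
\[
\|\S_T(\orho_0) - \xF_{\Delta t, n}^N(\orho_0)\|_1 \;\leq\; C\,T\,\|\orho\|_{L^\infty(0,T;\xK_{\oLambda^4})}\,\Delta t,
\]
with the constant $C$ independent of $n$. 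Using the stronger estimate available when $\gamma = 0$ (the $\gamma$-quasi-contraction in \cref{lem_apriori_estimate_sans_H} becomes genuine contraction of the $\xK_{\oLambda^4}$-norm), one can replace the $L^\infty$-in-time norm of $\orho$ by $\|\orho_0\|_{\xK_{\oLambda^4}}$, absorbing the factor $T$ into the constant $C$.

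The main conceptual obstacle is the invariance argument of the first step, in particular verifying that $\oS_{\Delta t}^{-1/2}$ agrees with $\oS_{\Delta t, n}^{-1/2}$ on $\xH_n$; this is where one must use that $\oS_{\Delta t}$ is a function of the number operator (and not, say, an expression involving $\oH$), so that the Fock-basis diagonal structure is preserved. Once this is cleanly stated, the proof is a direct transfer of the infinite-dimensional estimate to the truncated setting, with uniformity in $n$ guaranteed by the fact that the constant in Theorem \ref{th_onedissipator_firstorder} depends only on the Lindbladian data, not on the support of $\orho_0$.
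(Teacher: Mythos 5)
Your proposal is correct and follows essentially the same route as the paper: verify \cref{assump_sansH} for the $l$-photon loss channel with $\gamma=0$, observe that $\xF_{\Delta t}$ and $\xF_{\Delta t,n}$ coincide on $B(\xH_n)$ because $\xH_n$ is invariant under $\oL$ and $\oL^\dag\oL$ (hence under $\oS_{\Delta t}^{-1/2}$), invoke \cref{rmk_stability_photon_loss} to identify $\S_T$ with $e^{T\xL_n}$ on $B(\xH_n)$, and then apply \cref{th_onedissipator_firstorder} with the $\gamma=0$ contraction bound from \cref{lem_apriori_estimate_sans_H}. Your added detail on why $\oS_{\Delta t}^{-1/2}$ agrees with $\oS_{\Delta t,n}^{-1/2}$ on $\xH_n$ (diagonality in the Fock basis via functional calculus) is a welcome elaboration of a step the paper only asserts.
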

As mentionned above in \cref{rmk_stability_photon_loss}, we have $\S_t (\orho_0)=e^{t \xL_n}\orho_0$ when $\orho_0 \in \xH_n$.
Besides,
$\orho_0\in \xH_n$, i.e., $\orho_0$ is supported only on the first $n+1$ Fock states
and in particular
$\orho_0 \in \xK_{\oLambda^4}$.

We emphasize once again that the fact that $\S_t$ and $e^{t\xL_n}$ coincide on $B(\xH_n)$ is a very specific feature of the multi-photon loss channel, and is not to be generically expected in other contexts.
Thus, the generalization of \cref{prop_stability_scheme_multi_photon} to other dynamics would require in particular errors estimates on $\|(\S_t-e^{t\xL_n})\orho_0\|_1$ and is left to future work. Nevertheless, in the next section, we numerically illustrate how the quantum channel schemes perform in some other examples.
\subsection{Numerical implementation and performance comparisons}
\begin{figure}
    \centering
    \includegraphics[width=\textwidth]{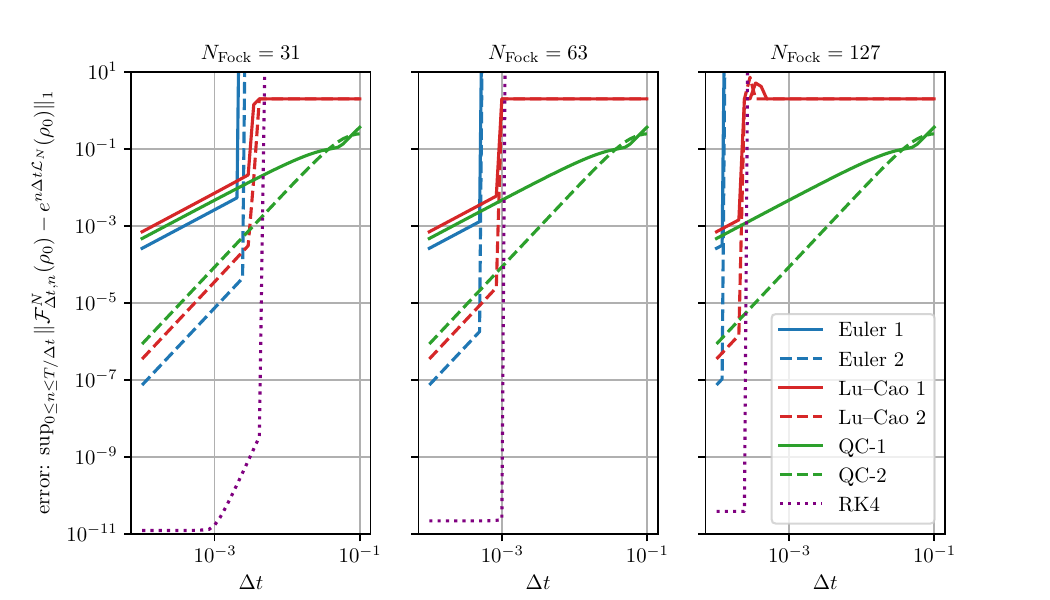}
    \includegraphics[width=\textwidth]{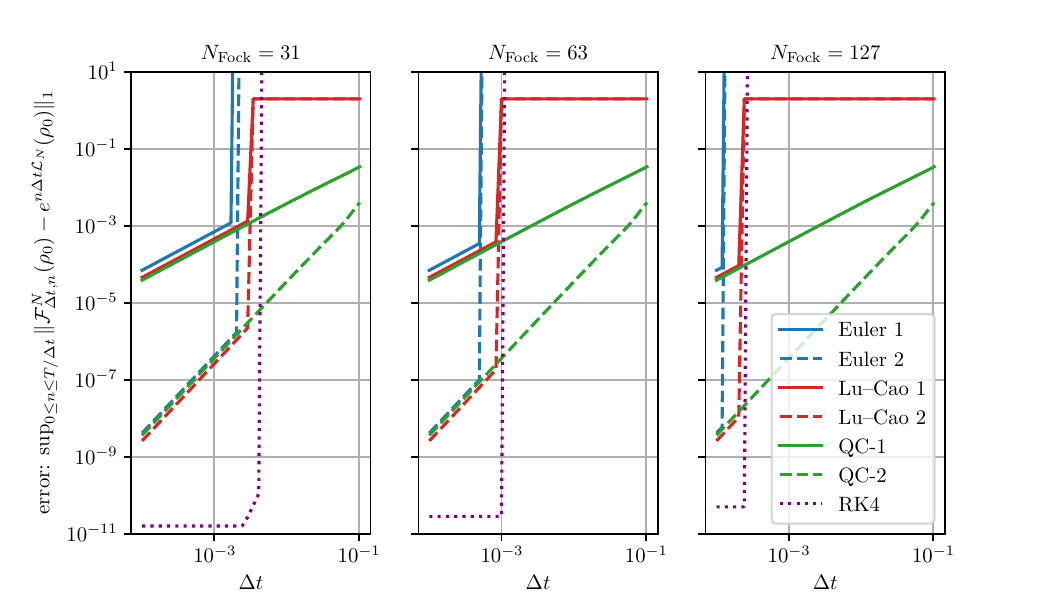}
    \caption{
	    Comparison of the Euler, Lu--Cao, Quantum channel and Runge--Kutta solvers. These methods are benchmarked on simulations of the preparation of a dissipative cat qubit (see \cref{eq_inflation_cat}) in the top row and a Z-Gate on a dissipative cat qubit (see \cref{eq_z_gate_cat}) on the bottom row, for increasing dimension $N_{\mathrm{Fock}}$ of the truncated Hilbert space.
	For each case, a reference solution is computed using an adaptive Dormand--Prince method with relative and absolute precisions set at $10^{-14}$.
	The error of each solver is then approximated as the supremum in time of the trace norm distance between the result of the solver and this reference solution. For each plot, we computed the solution for 40 different values of $\Delta t$ and then connected these points with a continuous line as a guide for the eye.
	The saturation of the error of the Lu--Cao solvers comes from the fact that their error is structurally bounded: indeed, these schemes preserve the convex set of density operators, whose diameter is 2.
	}
    \label{fig:simus}
\end{figure}

In this section, we analyze the precision of the following schemes as a function of the Galerkin truncation size.
\begin{enumerate}
    \item Euler 1: the first-order explicit Euler scheme,
    \begin{align}
        \orho_{n+1}=\orho_{n}+\Delta t \xL(\orho_{n})
    \end{align}
    \item Euler 2: the second-order explicit scheme obtained from a second-order Taylor expansion
    \begin{align}
        \orho_{n+1}=\orho_{n}+\Delta t \xL(\orho_{n})+\frac{(\Delta t)^2}{2}\xL(\xL(\orho_{n}))
    \end{align}
    \item Lu--Cao 1: the first-order non-linear structure preserving\footnote{\label{ftnote_struct}Structure preserving means here that the set of density operator is preserved, not that the evolution is a CPTP map} scheme introduced in \cite{caoStructurePreservingNumericalSchemes2024} and recalled in \cref{subsec:Non-linear positivity preserving schemes}:
    \begin{align*}
        \xF_{\Delta t}(\orho)&= \frac{\sum_{j=0}^{\Ndissip} \oM_{j} \orho \oM_{j}^\dag}{\xtr{\sum_{j=0}^{\Ndissip} \oM_{j} \orho \oM_{j}^\dag}}\\
        \oM_j&= \begin{cases}
            \Id + \Delta t \oG, & \text{for } j=0\\
            \sqrt{\Delta t} \oL_j, & \text{for }  1\leq j \leq \Ndissip,
        \end{cases}
    \end{align*}
    \item Lu--Cao 2: a second order non-linear structure preserving\footnote{See \cref{ftnote_struct}} scheme introduced in \cite{caoStructurePreservingNumericalSchemes2024}:
    \begin{align*}
        \xF_{\Delta t}(\orho)&= \frac{\sum_{j=0}^{\Ndissip(\Ndissip+1)} \oM_{j} \orho \oM_{j}^\dag}{\xtr{\sum_{j=0}^{\Ndissip(\Ndissip+1)} \oM_{j} \orho \oM_{j}^\dag}}\\
        \oM_j&= \begin{cases}
            \Id + \Delta t \oG + \frac{\Delta t}{2}\oG^2, & \text{for } j=0\\
            \sqrt{\frac{\Delta t}{2}} \oL_j (\Id + \Delta t \oG), & \text{for }  1\leq j \leq \Ndissip\\
            \sqrt{\frac{\Delta t^2}{2}} \oL_{j \!\!\! \mod \Ndissip} \oL_{\floor{\frac{j}{\Ndissip}}}, & \text{for }  \Ndissip+1 \leq j \leq \Ndissip(\Ndissip+1),
        \end{cases}
    \end{align*}
    \item QC-1: the first order quantum channel scheme studied in this paper:
    \begin{align*}
        \xF_{\Delta t}(\orho)&= \sum_{j=0}^{\Ndissip+1} \widetilde \oM_{j} \orho \widetilde \oM_{j}^\dag\\
        \oM_j&= \begin{cases}
            (\Id-i\frac{\Delta t}{2}  \oH)(\Id + i\frac{\Delta t}{2}  \oH)^{-1} (\Id- \frac{\Delta t}{2}  \sum_{j=1}^{\Ndissip} \oL_j^\dag \oL_j), & \text{for } j=0\\
            \sqrt{\Delta t} \oL_j , & \text{for }  1\leq j \leq \Ndissip\\
        \end{cases}\\
        \oS&= \sum_{j} \oM_{j}^\dag \oM_{j}, \quad \widetilde \oM_j= \oM_j \oS^{-1/2}
    \end{align*}
    \item QC-2: a second order linear structure preserving scheme obtained by applying the linear normalization method presented in \cref{subsec_CPTP_schemes} to Lu--Cao 2, yielding
    \begin{align*}
        \xF_{\Delta t}(\orho)&= \sum_{j=0}^{\Ndissip(\Ndissip+1)} \widetilde \oM_{j} \orho \widetilde \oM_{j}^\dag\\
        \oM_j&= \begin{cases}
            \Id + \Delta t \oG + \frac{\Delta t^2}{2}\oG^2, & \text{for } j=0\\
            \sqrt{\frac{\Delta t}{2}} \oL_j (\Id + \Delta t \oG), & \text{for }  1\leq j \leq \Ndissip\\
            \sqrt{\frac{\Delta t^2}{2}} \oL_{j \!\!\! \mod \Ndissip} \oL_{\floor{\frac{j}{\Ndissip}}}, & \text{for }  \Ndissip+1 \leq j \leq \Ndissip(\Ndissip+1),
        \end{cases}\\
        \oS&= \sum_{j} \oM_{j}^\dag \oM_{j}, \quad \widetilde \oM_j= \oM_j \oS^{-1/2}
    \end{align*}
    \item Runge--Kutta 4, which is known not to preserve the structure of Lindblad equation \cite{rieschAnalyzingPositivityPreservation2019},
	    but remains one of the most commonly used black-box solvers in open quantum systems numerical librairies.
\end{enumerate}
\begin{table}
	\centering
\begin{tabular}{|c|c|c|c|c|c|}
    \hline
    Method& Linear & Preserves the set of density matrices & CPTP & Order \\
    \hline
    Euler 1 & \checkmark & \text{\sffamily X}   &   \text{\sffamily X}   & 1 \\
    \hline
    Euler 2 & \checkmark & \text{\sffamily X}   & \text{\sffamily X}  & 2 \\
    \hline
    Lu--Cao 1 & \text{\sffamily X}   & \checkmark & \text{\sffamily X} & 1 \\
    \hline
    Lu--Cao 2 & \text{\sffamily X}   & \checkmark & \text{\sffamily X} & 2 \\
    \hline
    QC-1 & \checkmark & \checkmark & \checkmark&   1 \\
    \hline
    QC-2 & \checkmark & \checkmark & \checkmark& 2 \\
    \hline
    Runge--Kutta 4 & \checkmark & \text{\sffamily X} & \text{\sffamily X}&  4 \\
    \hline
    \end{tabular}
    \caption{Qualitative properties of the numerical schemes.}
	\label{table-prop_scheme}
\end{table}

Note that, among those schemes, only the Lu--Cao and QC families are structure-preserving; the qualitative properties of the schemes are summarized in \cref{table-prop_scheme}.
We include the Euler and Runge--Kutta schemes as points of comparison of the expected performance when
one resorts to the strategy of ignoring the structure of the problem and relying merely on the convergence of the scheme to ensure that relevant properties are satisfied, in a black-box manner.

In all simulations presented below, the error is defined relative to a reference solution using a Dormand--Prince method with relative and absolute precisions set at $10^{-14}$.

We consider two example quantum dynamics, which were introduced in \cite{mirrahimiDynamicallyProtectedCatqubits2014} in for the modelization of so-called dissipative cat qubits.
The first example models the initialization of a dissipative cat qubit and corresponds to the following Lindblad equation on $\xH = L^2(\R,\C)$:
\begin{align}
    \label{eq_inflation_cat}
    \frac{d}{dt} \orho_t = D[\oa^2 - \alpha^2 \Id](\orho_t),\quad \orho_0=\ket{0}\bra{0}
\end{align}
where $\oa$ is the usual annihilation operator and $\alpha\in\C$ is a parameter. In this setting, the relevant quantity to compute is the steady state of the evolution.
We set $\alpha=2$ (representative of the typical range in existing experiments, for instance $\alpha \in [\sqrt{2.5},\sqrt{11.3}]$ in \cite{regladeQuantumControlCat2024})
and fix the final time of the simulation to $T=1$. The results for the simulations using a truncated Fock basis with $32,\, 64$ and $128$ states are reproduced in the top of \cref{fig:simus}.

The second example models a Z-gate on a dissipative cat qubit, following \cite{mirrahimiDynamicallyProtectedCatqubits2014}. The dynamics reads
\begin{align}
    \label{eq_z_gate_cat}
	\frac{d}{dt} \orho_t = -i [\epsilon_z (\oa + \oa^\dag) ,\orho_t] +\kappa_2 D[\oa^2 - \alpha^2\Id](\orho_t) +\kappa_1 D[\oa](\orho_t), \quad \orho_0=\ket{\mathcal C_\alpha^+} \bra{\mathcal C_\alpha^+}
\end{align}
where we define the cat state $\ket{\mathcal C_\alpha^+} = (\ket\alpha + \ket{-\alpha})/||\ket\alpha + \ket{-\alpha}||$ with the usual definition of coherent states $\ket \alpha = e^{-|\alpha|^2/2} \sum_{n\in\N} \alpha^n/\sqrt{n!} \ket n$.
In this case, the relevant quantity to compute is the state reached after a time $T=\pi/(4\alpha\epsilon_z)$, which would coincide with the application of a logical Z gate in the absence of the noise term proportional to $\kappa_1$ in the equation (modeling photon loss) and in the limit of large $\kappa_2/\epsilon_z$.
We pick the following parameters: $\alpha=2$, $\epsilon_z=0.2$, $\kappa_1=0.01$ and $\kappa_2=1$.
The results for the simulations using a truncated Fock basis with $32,\, 64$ and $128$ states are reproduced in the bottom of \cref{fig:simus}.

In both examples, we notably observe that QC-1 and QC-2 are the only accurate schemes for large values of $\Delta t$.
	The explicit Euler solvers as well as the Runge--Kutta 4 solver are markedly subject to a CFL condition, producing simulations that are highly inaccurate or even crash for $\Delta t$ not small enough. Note that the stability condition obtained in \cref{subsubsec:CFL} for the two-photon loss channel
	(\emph{i.e.} using $l=2$ in \cref{eq_photon_loss}),
	namely $\Delta t N(N-1)<2$,
	predicts stability for $\Delta t \leq 2.2 \times 10^{-3}$
	(respectively, $5.1 \times 10^{-4}$ and $1.2 \times 10^{-4}$)
	for $N=31$ (respectively, $63$ and $127$),
	in good agreement with the numerically observed boundary.
	Furthermore, notice that the Runge--Kutta 4 solver exhibits a very stiff change in behaviour,
	from being unstable and raising numerical errors for large $\Delta t$ to delivering extremely accurate solutions for $\Delta t$ below the CFL.
	Note also that, outside of this stability zone, which becomes smaller and smaller as one increases the size $N_{\mathrm{Fock}}$ of the Galerkin discretization, the non-linear schemes Lu--Cao are also largely inaccurate. On the other hand, the schemes QC-1 and QC-2, that preserve the structure of the quantum evolutions, exhibit a strong robustness and an error scaling corresponding to their expected order even for large time-steps.

Regarding execution time, our scheme (QC-1) is comparable to Euler 1 or Lu--Cao 1.
More precisely,
the operators $(\widetilde \oM_j)_{0 \leq j \leq \Ndissip}$
are computed once at the beginning of the simulation\footnote{This requires inverting and taking the square root of $\oS$, which is a matrix of size $(\dim \H_N)\times (\dim \H_N)$. In contrast, a vectorized representation of $\xL_N$ requires a matrix of size $(\dim \H_N)^2\times (\dim \H_N)^2$.}, and each time-step then simply consists in the application $\orho \mapsto \sum_j \widetilde \oM_j \orho \widetilde \oM_j^\dag$.
We report in \cref{table-cost} the exact number of matrix multiplications and additions required to compute one step of each numerical scheme.
\begin{table}[h!]
	\centering
    \begin{tabular}{| c | c |c| c| }
        \hline
	    Method & Order  & Matrix multiplications & Matrix additions \\
     \hline
	    (Application of the Lindbladian) & &  $2 \Ndissip+2$ & $ \Ndissip+2$ \\
	    &&&\\
	    \multirow{2}{3em}{Euler} &  1 & $2 \Ndissip+2$ & $ \Ndissip+3$  \\
	    &  2 & $4 \Ndissip+4$ & $2 \Ndissip+6$  \\
	    &&&\\
	    \multirow{2}{5em}{Lu--Cao} &  1 & $2 \Ndissip+2$ & $\Ndissip$  \\
	    &  2 & $2 \Ndissip^2 +2\Ndissip+2$ & $2 \Ndissip^2 +2\Ndissip$  \\
	    &&&\\
	    \multirow{2}{3em}{QC} &  1 & $2 \Ndissip+2$ & $\Ndissip$  \\
	    &  2 & $2 \Ndissip^2 +2\Ndissip+2$ & $2 \Ndissip^2 +2\Ndissip$  \\
	    &&&\\
	    Runge--Kutta & 4 & $8 \Ndissip+8$ & $4 \Ndissip+14$\\
     \hline
    \end{tabular}
	    \caption{Computational cost of a single time-step for the numerical schemes studied in this section, as a function of the number $N_d$ of dissipators entering the Lindblad equation. The cost of evaluation of the Lindbladian superoperator is reported in the first line for comparison.}
	\label{table-cost}
\end{table}

Our final benchmark compares the computational time required to achieve specific precision levels for the Z-gate on a dissipative cat qubit, as described in \cref{eq_z_gate_cat} and illustrated in the lower plot of \cref{fig:simus}. We set three desired precision levels: $10^{-2}$, $10^{-4}$, and $10^{-6}$. Using the lower plot of \cref{fig:simus}, we determine the $\Delta t$ needed to achieve each precision level. We then simulate each scheme on our laptop\footnote{Equipped with an Intel CPU i7-9850H with 12 cores.} with a simple python implementation and report both the chosen $\Delta t$ and the total time required for the simulation at each precision level in \cref{table-cost_time}. For better reproducibility, the reported simulation time is average over 10 repetitions.

Except for high precision and small $N_{\mathrm{Fock}}$, where the CFL condition does not constrain classical schemes, quantum channel schemes outperform the competition in terms of the required number of steps and execution time. Note also that for the quantum channel schemes, the largest required time-step for a given precision is independent of the size of Hilbert space $\xH_N$. 
\renewcommand{\arraystretch}{1.3}
\begin{table}[h!]
	\centering
\begin{tabular}{| c | c | c|c|c | c|c|c|c|c|c| }
        \hline
	    \multirow{2}{*}{$N_{\mathrm{Fock}}$} &\multirow{2}{*}{Method}   & \multicolumn{3}{c|}{Precision $10^{-2}$} & \multicolumn{3}{c|}{Precision $10^{-4}$} &\multicolumn{3}{c|}{Precision $10^{-6}$}  \\
          \cline{3-11}
          &  & exec. time & $N_{\mathrm{step}}$& $\Delta t$ & exec. time & $N_{\mathrm{step}}$& $\Delta t$ & exec. time& $N_{\mathrm{step}}$& $\Delta t$\\
     \hline
\multirow{7}{*}{31} & Euler 1 & 23 ms & 1154 & 1.7e-3 & 0.32 s & 16447 & 1.2e-4 & N.A. & 1391653 & (\textit{1.4e-6})\\
 & Lu--Cao 1 & 62 ms & 678 & 2.9e-3 & 0.88 s & 9668 & 2.0e-4 & N.A. & 907610 & (\textit{2.2e-6})\\
 & QC-1 & 8.3 ms & 80 & 2.5e-2 & 0.62 s & 8098 & 2.4e-4 & N.A. & 768994 & (\textit{2.6e-6})\\
\cdashline{2-2}
 & Euler 2 & 39 ms & 966 & 2.0e-3 & 38 ms & 966 & 2.0e-3 & \textbf{ 55 ms } & 1377 & 1.4e-3\\
 & Lu--Cao 2 & 0.15 s & 678 & 2.9e-3 & 0.15 s & 678 & 2.9e-3 & 0.25 s & 1154 & 1.7e-3\\
 & QC-2 & \textbf{ 8.1 ms } & \textbf{ 19 } & 1.0e-1 & \textbf{ 36 ms } & \textbf{ 137 } & 1.4e-2 & 0.28 s & 1377 & 1.4e-3\\
\cdashline{2-2}
 & RK4 & 0.1 s & 476 & 4.1e-3 & 0.11 s & 476 & 4.1e-3 & 0.1 s & \textbf{ 476 } & 4.1e-3\\
\hline
\multirow{7}{*}{63} & Euler 1 & 0.24 s & 3987 & 4.9e-4 & 0.97 s & 16447 & 1.2e-4 & N.A. & 1391653 & (\textit{1.4e-6})\\
 & Lu--Cao 1 & 0.47 s & 2343 & 8.4e-4 & 2 s & 9668 & 2.0e-4 & N.A. & 907611 & (\textit{2.2e-6})\\
 & QC-1 & 18 ms & 80 & 2.5e-2 & 1.5 s & 8098 & 2.4e-4 & N.A. & 768994 & (\textit{2.6e-6})\\
\cdashline{2-2}
 & Euler 2 & 0.47 s & 3987 & 4.9e-4 & 0.46 s & 3987 & 4.9e-4 & \textbf{ 0.46 s } & 3987 & 4.9e-4\\
 & Lu--Cao 2 & 1.3 s & 2343 & 8.4e-4 & 1.3 s & 2343 & 8.4e-4 & 1.2 s & 2343 & 8.4e-4\\
 & QC-2 & \textbf{ 16 ms } & \textbf{ 19 } & 1.0e-1 & \textbf{ 76 ms } & \textbf{ 137 } & 1.4e-2 & 0.73 s & \textbf{ 1377 } & 1.4e-3\\
\cdashline{2-2}
 & RK4 & 1.2 s & 1963 & 1.0e-3 & 1.2 s & 1963 & 1.0e-3 & 1.2 s & 1963 & 1.0e-3\\
\hline
\multirow{7}{*}{127} & Euler 1 & 3.7 s & 16447 & 1.2e-4 & 3.9 s & 16447 & 1.2e-4 & N.A. & 1391653 & (\textit{1.4e-6})\\
 & Lu--Cao 1 & 9.2 s & 9668 & 2.0e-4 & 8.9 s & 9668 & 2.0e-4 & N.A. & 907611 & (\textit{2.2e-6})\\
 & QC-1 & 89 ms & 80 & 2.5e-2 & 6.9 s & 8098 & 2.4e-4 & N.A. & 768994 & (\textit{2.6e-6})\\
\cdashline{2-2}
 & Euler 2 & 7.8 s & 16447 & 1.2e-4 & 7.7 s & 16447 & 1.2e-4 & 7.7 s & 16447 & 1.2e-4\\
 & Lu--Cao 2 & 24 s & 9668 & 2.0e-4 & 25 s & 9668 & 2.0e-4 & 26 s & 9668 & 2.0e-4\\
 & QC-2 & \textbf{ 59 ms } & \textbf{ 19 } & 1.0e-1 & \textbf{ 0.35 s } & \textbf{ 137 } & 1.4e-2 & \textbf{ 3.5 s } & \textbf{ 1377 } & 1.4e-3\\
\cdashline{2-2}
 & RK4 & 20 s & 8098 & 2.4e-4 & 19 s & 8098 & 2.4e-4 & 18 s & 8098 & 2.4e-4\\
\hline
\end{tabular}

\caption{Computation time required to achieve a given precision, assuming the optimal $\Delta t$ is known. We use the lower plot of \cref{fig:simus}, where 40 different $\Delta t$ values have been tested, selecting the first one that yields an error below the required precision.
	For high precision with first-order schemes, values in parentheses indicate the expected required time-step in cases where the simulation would take too long to run; these values are extrapolated from the slopes of the corresponding curves in the lower plot of \cref{fig:simus}.
	The minimum computation time among the seven selected schemes is highlighted in \textbf{bold}, along with the smallest required number of steps. }
\label{table-cost_time}
\end{table}

\section{Conclusion and perspectives}

We have introduced and investigated a new class of Completely Positive Trace-Preserving schemes for the time discretization of  Lindblad master equations. We established that these schemes approximate the exact solution even when the Lindbladian is unbounded. Furthermore, through simple examples and numerical simulations, we showed that these CPTP schemes are robust and not subject to a CFL condition. This advantageous property is not shared by existing non-linear positive trace-preserving schemes.

We believe that the following avenues are worth exploring for future research:
\begin{enumerate}

    \item \textbf{Higher-order schemes.} Higher-order quantum channel schemes can be obtained using the method developed in \cite{caoStructurePreservingNumericalSchemes2024}. However, we have not yet performed an analysis of these schemes in the infinite-dimensional setting. Another interesting question would be to determine the minimum number of matrix operations required per time-step as a function of the order of the scheme. Following \cite{caoStructurePreservingNumericalSchemes2024}, the scaling
	    of the positivity-preserving schemes presented here is $\Theta(\Ndissip^p)$, where $p$ is the desired order and $N_d$ the number of dissipators entering the Lindblad equation, compared to $\Theta(d \Ndissip)$ for Euler or Runge-Kutta methods. For $p=2$, it is possible to achieve a scaling of $\Theta(\Ndissip)$; we are still investigating whether this observation could be generalized. This optimization is important for implementing fast, high-order schemes, especially when many jump operators are involved.

    \item \textbf{Time-dependent Lindbladian.} While we believe that most of our results can be extended to reasonably time-dependent Lindbladians with some technical assumptions, we also think that investigating possible optimizations of the numerical implementation would be valuable. Since $\oS = \sum_j \oM_j^\dag \oM_j$ is now time-dependent, the naive generalization of our algorithm would involve computing the inverse of its square root at each time-step. In the noteworthy case where the Hamiltonian is time-dependent but the dissipators are constant, the aforementionned difficulty is enterily circumvented since the matrix $\oS$ does not depend on $\oH$ (this fundamentally stems from the splitting operated in the definition of $\oM_{0,\Delta t}$).

    \item \textbf{Convergence of Galerkin approximations and link with time discretization.} This article does not investigate the convergence of the Galerkin approximation, that is the solution of \cref{eq-Lindblad_truncated}, to the solution of the original Lindblad equation. A recent preprint \cite{etienneyPosterioriErrorEstimates2025} provides \textit{a posteriori} error estimates, but does not include a proof of convergence. We hypothesise that the \textit{a priori} estimates presented in this article could be leveraged to establish convergence with an explicit rate. Once this is achieved, an interesting next step would be to generalize the results of \cref{subsec:cvg_ex_cptp} to dynamics that lack the stability property described in \cref{rmk_stability_photon_loss}. Specifically, this would involve demonstrating that for the CPTP schemes introduced in this paper, the error between the exact solution and the time discretized solution of the Galerkin approximation can be bounded by the sum of two terms: one controlling the time discretization error and the other controlling the Galerkin error.
\end{enumerate}
\subsection*{Acknowledgments}

The authors would like to express their gratitude to Claude Le Bris for invaluable fruitful discussions,
and to Ronan Gautier, Pierre Guilmin, Mazyar Mirrahimi, Alexandru Petrescu, Alain Sarlette, and Antoine Tilloy for their insightful feedback.

This project has received funding from the European Research Council (ERC) under the European Union’s Horizon 2020 research and innovation program (grant agreement No. 884762).
Part of this research was performed while the first and last authors were visiting the Institute for Mathematical and Statistical Innovation (IMSI) in Chicago, which is supported by the National Science Foundation (Grant No. DMS-1929348).

\begin{appendix}
 
\section{Proof of Lemma \ref{prop_regularity}
}
\label{sec_proof_prop_regularity_sans_H}

    Let us establish the \textit{a priori} estimate stated in \cref{eq_apriori_withoutH}. This proof is inspired by \cite[Section 3.6]{fagnolaQuantumMarkovSemigroups1999}.
    Under \cref{assump_sansH}, for every $k \geq 0$, $\D(\oLambda^k)=\xH^{2k}= \D(\oG^k)$ is stable under the semigroup $(e^{t\oG})$. In \cref{assump_apriori} we assumed the stability of $\xH^{4}$ and $\xH^{6}$.
    In both cases, \cref{apriori_sans_H} ensures that it is $\gamma$-quasi-dissipatif on $\xH^{4}$.
    
    For $\oLambda >0$, let us consider $R^{(m)}_\oLambda$, which denotes the truncation of the series characterizing the resolvent $R^{min}_\lambda$ of the minimal semigroup (refer to \cref{eq_resolvent_min}):
    \begin{align}
        R_\lambda^{(m)}(\oX)=\sum_{k=0}^m Q_\lambda^k(P_\lambda(\oX)), \quad \oX\in B(\xH).
    \end{align}
    Let us show by induction in $m$ that for any $m,n\in \N$, $\lambda >\gamma, u \in \xH^4$
    \begin{align}
        (\lambda-\gamma)\braket{u|R_\lambda^{(m)}(n\oLambda^4R(n,-\oLambda^4))u}\leq \|\oLambda^2u\|^2
    \end{align}
    For $m=0$ we have and
    \begin{align*}
        \braket{u|R_\lambda^{(0)}(n\oLambda^4R(n,-\oLambda^4))u}
        &=\int_0^\infty e^{-\lambda s} \braket{e^{s\oG}u|n\oLambda^4R(n,-\oLambda^4) e^{s\oG}  u}ds\\
        & \leq \int_0^\infty e^{-\lambda s} \|\oLambda^2 e^{s\oG}u\|^2\\
        &\leq  \int_0^\infty e^{(\gamma-\lambda) s} \|\oLambda^2 u\|^2= \frac{1}{\lambda- \gamma} \|\oLambda^2 u\|^2 
    \end{align*}
    Next, using $R^{(m+1)}_\lambda= Q_\lambda(R^{(m)}_\lambda)+ P_\lambda$, we get for every $u \in \xH^6$
    \begin{align}
        &\braket{u|R_\lambda^{(m+1)}(n\oLambda^4R(n,-\oLambda^4))u}\\
        &=\braket{u|P_\lambda (n\oLambda^4R(n,-\oLambda^4))u}+\sum_{j=1}^{\Ndissip} \int_0^\infty e^{-\lambda s} \braket{\oL_j e^{s\oG}u|R_\lambda^{(m)}(n\oLambda^4R(n,-\oLambda^4))\oL_j e^{s\oG} u}ds\\
        &\leq \int_0^\infty e^{-s\lambda}\|\oLambda^2 e^{s\oG} u\|^2ds+\frac{1}{\lambda-\gamma }\sum_{j=1}^{\Ndissip} \int_0^\infty e^{-\lambda s} \|\oLambda^2 \oL_j e^{s\oG}u\|^2ds
    \end{align}
    where we used the induction and the fact $\xH^6$ is stable under $(e^{s\oG})$. We continue using \cref{apriori_sans_H}
    \begin{align}
        \label{eq_979}
        &\leq \int_0^\infty e^{-s\lambda}\|\oLambda^2 e^{s\oG} u\|^2ds+\frac{1}{\lambda-\gamma }\int_0^\infty e^{-\lambda s} (\gamma\|\oLambda^2 e^{s\oG}u\|^2-2\operatorname*{Re}\braket{\oLambda^2 \oG e^{s\oG} u|\oLambda^2 e^{s\oG} })ds\\
        &\leq \frac{\lambda}{\lambda-\gamma}\int_0^\infty e^{-s\lambda}\|\oLambda^2 e^{s\oG} u\|^2ds-\frac{2}{\lambda-\gamma }\int_0^\infty e^{-\lambda s}\operatorname*{Re}\braket{\oLambda^2 \oG e^{s\oG} u|\oLambda^2 e^{s\oG} }ds
    \end{align}
    Performing an integration by part on the right term of the previous sum gives
    \begin{align}
        -\frac{2}{\lambda-\gamma}\int_0^\infty e^{-\lambda s}\operatorname*{Re}\braket{\oLambda^2 \oG e^{s\oG} u|\oLambda^2 e^{s\oG} }ds&=
        \frac{-1}{\lambda-\gamma}\int_0^\infty e^{-\lambda s}\frac{d}{ds} \left(\|\oLambda^2 e^{s\oG} u\|^2 \right) ds\\
        &=\frac{1}{\lambda-\gamma}\|\oLambda^2u\|^2 -\frac{\lambda}{\lambda-\gamma} \int_0^\infty e^{-s\lambda}\|\oLambda^2 e^{s\oG} u\|^2ds.
    \end{align}
    Reinjecting in \cref{eq_979} leads to
    \begin{align}
        \braket{u|R_\lambda^{(m+1)}(n\oLambda^4R(n,-\oLambda^4))u}
        \leq \frac{1}{\lambda-\gamma}\|\oLambda^2u\|^2,
    \end{align}
    for all $u \in \xH^6$. As $\xH^6$ is a core for $\oLambda^2$, the previous equality holds for any $u \in \xH^4$ Then taking the supremum on $m$ gives
    \begin{align}
        \label{eq_resolvant_estimate}
        \braket{u|R_\lambda^{min}(n\oLambda^4R(n,-\oLambda^4))u}
        \leq \frac{1}{\lambda-\gamma}\|\oLambda^2u\|^2, \quad u\in \xH^4.
    \end{align}
    Then, using that 
    \begin{align}
        T_t^{min}(\oX) =s-\lim_{k\to \infty} (\frac{k}{t} R_{\frac{k}{t}}^{min})^k \oX, \qquad \oX\in B(\xH),
    \end{align}
    we get 
    \begin{align}
        \braket{u|\T_t^{min}(n\oLambda^4R(n,-\oLambda^4))u}
        \leq \|\oLambda^2u\|^2 \limsup_{k\to \infty} (\frac{k/t}{k/t-\gamma})^k=e^{\gamma t}\|\oLambda^2u\|^2,
    \end{align}
    which concludes the proof of \cref{eq_apriori_withoutH}.
    It has been proven in \cite{chebotarevLindbladEquationUnbounded1997,chebotarevSufficientConditionsConservativity1998}, that (a weaker version of) the \textit{a priori} estimate \cref{eq_apriori_withoutH} implies the conservativity of the minimal semigroup. 
    
    \begin{comment}
    \rrwarning{réécrire cette partie}
    To prove the conservativity of the minimal semigroup, we use part of the result obtained by Chebotarev and Fagnola in \cite{chebotarevSufficientConditionsConservativity1998}.
    Namely, we introduce 
    \begin{align}
        F_n&=\sum_{j=1}^{\Ndissip} L_j^\dag L_j nR(n,-\oLambda^4)
    \end{align}
    then, we have using that $\oLambda$ and $G$ commute that
    \begin{align*}
        P_\lambda(F_n)= Q_\lambda(\Id) nR(n,-\oLambda^4)
    \end{align*}
    Then, following \cite[Prop 3.34]{chebotarevSufficientConditionsConservativity1998}
    \begin{align}
        \sum_{k=0}^\infty \braket{u,Q_\lambda^{k+1}(\Id)u}&
        =\sup_n \sum_{k=0}^\infty \braket{u|Q_\lambda^{k}(F_n)u}\\
        &=\sup_n \braket{u| R^{min}_{\lambda}(F_n)}.
    \end{align}
    As $F_n\leq \oLambda R(n,-\oLambda^4) \leq \oLambda^4 R(n,-\oLambda^4)$,
    \cref{eq_resolvant_estimate} implies that the series of positive terms on left-hand side is bouded for every $u \in \xH^4$. Thus, $s-\lim_{n\to \infty} Q_\lambda^n(\Id)=0$ which implies conservativity
    \rrnote{expliquer un peu plus en utilisant 3.40 de Fagnola}.
\end{comment}

\section{Interpolation}
\label{app_interpol}

\begin{lemma}
    \label{lem_interpol}
    Let $n_0 \geq l \geq 0$ and $\oT$ be a continuous operator from $\xH^l\to \xH^0$ and from $\xH^{n_0}\to \xH^{n_0-l}$.
    Then the restriction of $\oT$ to $\xH^{n_0}$ can be extended to a continuous operator $\xH^{n}\to \xH^{n-l}$ for all $n \in [n_0,l]$.
\end{lemma}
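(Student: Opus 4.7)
The plan is to reduce this to a standard complex interpolation statement between the two Hilbert spaces $\xH$ and a fractional-power domain of $\oLambda$. First I would factor the operator: set $\oS := \oT \oLambda^{-l/2}$, initially defined on the dense subspace $\oLambda^{l/2}(\xH^{n_0})$. Writing the norm on $\xH^k$ as $\|\oLambda^{k/2}\cdot\|$, the assumption that $\oT:\xH^l\to\xH^0$ is continuous reads $\|\oS v\|\leq C_1\|v\|$, so that $\oS$ extends by density to a bounded operator on $\xH$ of norm at most $C_1$. The assumption that $\oT:\xH^{n_0}\to\xH^{n_0-l}$ is continuous reads $\|\oLambda^{(n_0-l)/2}\oS v\|\leq C_2\|\oLambda^{(n_0-l)/2}v\|$, i.e., the extended $\oS$ sends $\xH^{n_0-l}$ into itself continuously with norm at most $C_2$.

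Next, I would interpolate between these two statements. Since $\oLambda$ is positive self-adjoint with $\oLambda \geq \Id$, the family $(\oLambda^z)_{z\in\C}$ provided by the Borel functional calculus gives unitary operators $\oLambda^{is}$ for $s\in\R$. For fixed $u,v\in \xH^\infty$, the function
\begin{equation*}
F(z)=\braket{u\,|\,\oLambda^{z(n_0-l)/2}\,\oS\,\oLambda^{-z(n_0-l)/2}\,v},\qquad 0\leq \operatorname{Re}(z)\leq 1,
\end{equation*}
is well-defined, holomorphic in the interior of the strip, and continuous and bounded up to the boundary (the factors $\oLambda^{\pm z(n_0-l)/2}$ leave $\xH^\infty$ invariant and depend analytically on $z$). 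On the line $\operatorname{Re}(z)=0$ one has $|F(it)|\leq C_1\|u\|\,\|v\|$, and on $\operatorname{Re}(z)=1$ one has $|F(1+it)|\leq C_2\|\oLambda^{(n_0-l)/2}u\|\,\|\oLambda^{-(n_0-l)/2}v\|$ by the two hypotheses. Hadamard's three-lines theorem then yields, for any $\theta\in[0,1]$, the bound $|F(\theta)|\leq C_1^{1-\theta}C_2^{\theta}\,\|\oLambda^{\theta(n_0-l)/2}u\|\,\|\oLambda^{-\theta(n_0-l)/2}v\|$, from which one concludes that $\oS$ maps $\xH^{\theta(n_0-l)}$ continuously into itself with operator norm bounded by $C_1^{1-\theta}C_2^\theta$.

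Finally, for $n\in[l,n_0]$ I would set $\theta=(n-l)/(n_0-l)\in[0,1]$, so that $\theta(n_0-l)=n-l$. By construction $\oLambda^{l/2}$ is an isometric isomorphism from $\xH^n$ onto $\xH^{n-l}$, so writing $\oT = \oS\,\oLambda^{l/2}$ on the dense subspace $\xH^{n_0}\subset \xH^n$ and invoking the previous step yields a continuous extension $\oT:\xH^n\to \xH^{n-l}$, concluding the proof.

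The main technical point is the proper setting up of the analytic family $F$ and checking its boundedness and continuity on the closed strip; once this is done, the three-lines theorem is applied as a black box. All other steps are routine manipulations of the Borel functional calculus of $\oLambda$, exploiting that $\xH^\infty$ is a common dense core stable under all $\oLambda^z$.
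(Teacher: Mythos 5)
Your proposal follows essentially the same route as the paper's proof: the analytic family $z\mapsto \oLambda^{z(n_0-l)/2}\,\oT\,\oLambda^{-z(n_0-l)/2-l/2}$ that the paper tests against $\bra{\varphi}\cdot\ket{\psi}$ is exactly your $\oLambda^{z(n_0-l)/2}\,\oS\,\oLambda^{-z(n_0-l)/2}$ with $\oS=\oT\oLambda^{-l/2}$, and both arguments conclude via Hadamard's three-lines theorem on the strip; the preliminary factorization through $\oS$ is purely cosmetic. One step of your write-up does need repairing: the boundary estimate on $\operatorname{Re}(z)=1$ and the output you attribute to the three-lines theorem are not what the argument produces. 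Using that the imaginary powers $\oLambda^{is}$ are unitary and that $\oLambda^{(n_0-l)/2}\,\oS\,\oLambda^{-(n_0-l)/2}$ is bounded on $\xH$ with norm $C_2$ (this is the second hypothesis), the correct boundary bounds are $|F(it)|\le C_1\|u\|\,\|v\|$ and $|F(1+it)|\le C_2\|u\|\,\|v\|$, and the three-lines theorem then yields $|F(\theta)|\le C_1^{1-\theta}C_2^{\theta}\,\|u\|\,\|v\|$ with the \emph{unweighted} norms; the interpolated mapping property is then read off by writing $F(\theta)=\braket{\oLambda^{\theta(n_0-l)/2}u\,|\,\oS\,\oLambda^{-\theta(n_0-l)/2}v}$ and letting $u,v$ range over a dense set. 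As literally written, your $\theta$-dependent bound $|F(\theta)|\le C_1^{1-\theta}C_2^{\theta}\,\|\oLambda^{\theta(n_0-l)/2}u\|\,\|\oLambda^{-\theta(n_0-l)/2}v\|$ reduces, after the substitution $u'=\oLambda^{\theta(n_0-l)/2}u$, $v'=\oLambda^{-\theta(n_0-l)/2}v$, to the statement that $\oS$ is bounded on $\xH$ — i.e.\ your hypothesis at $\theta=0$ — and does not give the lemma. Once the boundary estimates are stated correctly the rest of your argument (density of $\xH^\infty$, analyticity of $z\mapsto\oLambda^z$ on the core, and the final composition with the isometry $\oLambda^{l/2}:\xH^n\to\xH^{n-l}$) goes through and matches the paper.
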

\begin{proof}
    This is a standard result of interpolation theory. For every $\ket{\psi}\in \xH^{n_0},\, \ket{\varphi} \in \xH$, we introduce the function $g$ defined on the strip $S=\{ z \in \C \mid 0 \leq \operatorname*{Re}(z)\leq 1\}$
\begin{align*}
    g(z)= \bra{\varphi} \oLambda^{z(n_0-l)/2} \oT \oLambda^{-z(n_0-l)/2-l/2} \ket{\psi}
\end{align*}
    Let us show that $g$ is continuous on $S$ and analytic on $S^\circ$. We rewrite the previous equation as
    \begin{align}
        g(z)=\bra{\varphi} \oLambda^{(z-1)(n_0-l)/2} \oLambda^{(n_0-l)/2} \oT \oLambda^{-n_0/2} \oLambda^{-z(n_0-l)/2-l/2} \oLambda^{n_0/2} \ket{\psi}
    \end{align}
    We recall that $\oLambda^{(z-1)(n_0-l)/2}$, $\oLambda^{(n_0-l)/2} \oT \oLambda^{-n_0/2}$ and $\oLambda^{-z(n_0-l)/2-l/2}$ are bounded on $\xH$. Besides
    \begin{align}
        z \mapsto \bra{\varphi}\oLambda^{(z-1)(n_0-l)/2}, \quad z \mapsto \oLambda^{-z(n_0-l)/2-l/2} \oLambda^{n_0/2} \ket{\psi}
    \end{align}
     are continuous on $S$ and holomorphic (As $\oLambda$ is the generator of an analytic semigroup) on $S^\circ$; thus so is $g$.

    Let us now compute
    \begin{align}
        |g(it)|&=|\bra{\varphi}\oLambda^{itn_0/2}\oT \oLambda^{-itn_0/2-l/2} \ket{\psi}|\\
        &\leq \|\oT\|_{\xH^l \to \xH} \|\ket{\varphi}\| \|\ket{\psi}\|
    \end{align}
    and 
    \begin{align}
        |g(1+it)|&=|\bra{\varphi}\oLambda^{itn_0/2+(n_0-l)/2}\oT \oLambda^{-itn_0/2-n_0/2} \ket{\psi}|\\
        &\leq \|\oT\|_{\xH^{n_0} \to \xH^{n_0-l}} \|\ket{\varphi}\| \|\ket{\psi}\|
    \end{align}
    We deduce using Hadamard’s three-lines theorem that for $0 \leq \theta \leq 1$ 
    \begin{align}
        |g(\theta)|\leq \|\ket{\varphi}\| \|\ket{\psi}\|  \|\oT\|_{\xH^{n_0} \to \xH^{n_0-l}}^{1- \theta} \|\oT\|_{\xH^l \to \xH}^\theta
    \end{align}
    Taking the supremum over $\ket{\varphi}$ such that $\|\ket{\varphi}\|=1$ leads to
    \begin{align}
        \|\oLambda^{(\theta n_0-l)/2} \oT \oLambda^{-\theta}\ket{\psi}\| \leq \| \ket{\psi} \| \|\oT\|_{\xH^{n_0} \to \xH^{n_0-l}}^{1- \theta} \|\oT\|_{\xH^l \to \xH}^\theta
    \end{align}
    Thus $\oT$ can be extended to a bounded linear map $\xH^{\theta n_0}\to \xH^{\theta n_0-l}$.
\end{proof}

%\paragraph{The semigroup $e^{tG}$}
%There exists $w_1,w_2 >0$ such that $\forall \ket{\psi}\in \D(\oLambda^5)$,
%        \begin{align}
%            \label{eq_dissipativity}
%            2 \operatorname*{Re} ( \braket{G \psi | \oLambda \psi}) \leq w_1 \|\oLambda^{1/2} \ket{\psi}\|^2\\
%            \label{eq_dissipativity_2}
%            2 \operatorname*{Re} ( \braket{G \psi | \oLambda^5 \psi}) \leq w_2 \|\oLambda^{5/2} \ket{\psi}\|^2
%        \end{align}
%First we consider $G=-iH -\frac{1}{2}\sum_k L_k^\dag L_k$ on $\D(\oLambda)$. Using relative boundedness with respect to $\oLambda$, we deduce that $D(\oLambda)$ is in the domain of its adjoint, hence $G,\D(\oLambda)$ is closable with domain $\D(G)$. As $G$ is dissative on $\xH$. To prove that it genererate a semigroup of contraction on $\xH$, it remains to show that $(G+ \Id)$ is surjective. We introduce $G_\epsilon=G-\epsilon \oLambda$ on $D(\oLambda)$. This operator is closed and generate a semigroup of contraction on both $\xH$ and $\xH^1$.
%Indeed
%\begin{align}
%    \langle G_\epsilon \psi, \varphi \rangle_1
%\end{align}
\end{appendix}

\printbibliography

\end{document}